\DeclareMathOperator{\Expand}{Expand}
\DeclareMathOperator{\ResApprox}{ResApprox}
\DeclareMathOperator{\STSolve}{STSolve}
\DeclareMathOperator{\Id}{Id}
\DeclareMathOperator{\ST}{ST}
\DeclareMathOperator{\rank}{rank}
\DeclareMathOperator{\supp}{supp}
\newcommand{\Restr}{\mathbf{R}}
\newcommand\diam{\mathop{\rm diam}}
\providecommand{\abs}[1]{\lvert#1\rvert}
\providecommand{\norm}[1]{\lVert#1\rVert}
\providecommand{\bignorm}[1]{\bigl\lVert#1\bigr\rVert}
\providecommand{\Bignorm}[1]{\Bigl\lVert#1\Bigr\rVert}
\providecommand{\biggnorm}[1]{\biggl\lVert#1\biggr\rVert}
\providecommand{\ceil}[1]{\lceil#1\rceil}
\newtheorem{theorem}{Theorem}
\newtheorem{lemma}[theorem]{Lemma}
\newtheorem{prop}[theorem]{Proposition}
\theoremstyle{definition}
\newtheorem{assumption}{Assumption}
\theoremstyle{remark}
\newtheorem{remark}[theorem]{Remark}
\newcommand{\cA}{{\mathcal{A}}}
\newcommand{\cM}{{\mathcal{M}}}
\newcommand{\cF}{{\mathcal{F}}}
\newcommand{\cV}{\mathcal{V}}
\newcommand{\cT}{\mathcal{T}}
\newcommand{\cI}{\mathcal{I}}
\newcommand{\cS}{\mathcal{S}}
\newcommand{\cO}{\mathcal{O}}
\newcommand{\cG}{\mathcal{G}}
\newcommand{\bA}{\mathbf{A}}
\newcommand{\ba}{\mathbf{a}}
\newcommand{\bI}{\mathbf{I}}
\newcommand{\bu}{\mathbf{u}}
\newcommand{\bv}{\mathbf{v}}
\newcommand{\bw}{\mathbf{w}}
\newcommand{\bbf}{\mathbf{f}}
\newcommand{\bB}{\mathbf{B}}
\newcommand{\bM}{\mathbf{M}}
\newcommand{\bN}{\mathbf{N}}
\newcommand{\br}{\mathbf{r}}
\newcommand{\sdd}{\,\mathrm{d}}
\newcommand{\Chi}{\raise .3ex
\hbox{\large $\chi$}} 
\newcommand{\T}{{\mathbb{T}}}
\newcommand{\R}{\mathbb{R}}
\newcommand{\N}{\mathbb{N}}
\numberwithin{equation}{section}
\date{\today}
\title[Sparse and low-rank approximations: The best of both worlds]{Sparse and low-rank approximations of parametric elliptic PDEs: the best of both worlds}
\author{Markus Bachmayr$^1$}
\address{\rm $^1$ Institut f\"ur Geometrie und Praktische Mathematik, RWTH Aachen University, Templergraben 55, 52062 Aachen, Germany}
\email[Markus Bachmayr]{bachmayr@igpm.rwth-aachen.de}
\author{Huqing Yang$^1$}
\email[Huqing Yang]{yang@igpm.rwth-aachen.de}
\thanks{
The authors acknowledge funding by Deutsche Forschungsgemeinschaft (DFG, German Research Foundation) -- project number 442047500/SFB 1481 \emph{Sparsity and Singular Structures}. } 
\begin{document}

\begin{abstract}
A new approximation format for solutions of partial differential equations depending on infinitely many parameters is introduced. By combining low-rank tensor approximation in a selected subset of variables with a sparse polynomial expansion in the remaining parametric variables, it addresses in particular classes of elliptic problems where a direct polynomial expansion is inefficient, such as those arising from random diffusion coefficients with short correlation length. A convergent adaptive solver is proposed and analyzed that maintains quasi-optimal ranks of approximations and at the same time yields optimal convergence rates of spatial discretizations without coarsening. The results are illustrated by numerical tests.

\medskip
\noindent \emph{Keywords.}  parametric elliptic PDEs, sparse polynomial approximations, low-rank tensor representations, adaptivity
\smallskip

\noindent \emph{Mathematics Subject Classification.} {41A46, 41A63, 42C10, 65D99, 65J10, 65N12, 65N15}
\end{abstract}

\maketitle

\section{Introduction}

Partial differential equations depending on a large or even infinite number of parameters are frequently encountered in applications. 
They can arise not only in a deterministic context in the exploration of high-dimensional parameter spaces, but also in problems of uncertainty quantification: when coefficients given by random fields are expanded as function series with scalar random coefficients, this also leads to formulations as deterministic parametric problems.

 In such problems, approximate solutions for many different parameter values, functionals of solutions, or their expectations may be required. 
A natural question is then how to construct efficient approximations of the \emph{solution map} that takes each parameter value $y$ from some admissible set $Y$ to the corresponding solution of the PDE problem.
In this article, we consider the parametric elliptic diffusion problem on a spatial domain $D \subset \R^d$, typically with $d \in \{1,2,3\}$,
\begin{equation}
    \label{eq:elliptic}
    \begin{aligned}
		-\nabla \cdot(a(x,y) \nabla u(x,y)) & =f(x) \quad & & \text { in } D\times Y  \\
		u(x,y) & =0  \quad & & \text{ on } \partial D.
	\end{aligned}
\end{equation}
In the standard weak formulation of this problem, we assume $f \in L_2(D)$ to be given and for parameter-dependent diffusion coefficients $a(\cdot ,y)$ seek the corresponding solutions $u(\cdot, y) \in V = H^1_0(D)$.
We focus on the case where $a$ is parameterized in an affine manner in terms of uniformly distributed random variables $y = (y_1,y_2,\ldots)$, 
\begin{equation}
    \label{eq:affine}
    a\left(x, y_1, y_2, \ldots\right)=\bar{a}(x)+\sum_{j \in \cI} y_j \theta_j(x),
\end{equation}
with $\bar{a}, \theta_j \in L_{\infty}(D)$ and $y_j \in [-1,1]$ for each $j \in \cI$. Accordingly $Y = [-1,1]^{\cI}$, where $\cI$ may be of finite or of countably infinite cardinality. 
The latter case requires that $\norm{\theta_j}_{L_\infty} \to 0$ as $j\to \infty$, to that asymptotically for increasing $j$, the parametric variables $y_j$ have decreasing influence on $a$ and thus on the solution $u$.

The uniform ellipticity assumption
\begin{equation}
    \label{UEA}
    \bar{a} -  \sum\limits_{j \in \cI}|\theta_j| \geq  r
\end{equation}
with some $r>0$ guarantees the well-posedness of the problem \eqref{eq:elliptic} in $V = H_0^1(D)$ 
for all $y \in Y$. 
In the above setting, the mapping $y \mapsto u(\cdot, y) \in V$ has a high degree of regularity with respect to the parametric variables $(y_i)_{i \in \cI}$: it can be shown (see, e.g., \cite{CohenDeVore:15}) to have a holomorphic extension to a subset of $\mathbb{C}^\cI$ containing $Y$. Thus spectral methods based on multivariate polynomials on $Y$ are a natural choice for approximating $u$. For generating such polynomial approximations, there exists by now a wide variety of methods, where each favors a particular choice of basis of a space spanned by polynomials, such as Taylor expansions, sparse polynomial interpolation (also known as stochastic collocation), or orthonormal polynomials combined with variational (so-called \emph{stochastic Galerkin}) formulations.

The methods mentioned above are all based on a sparse selection of multivariate polynomial degrees.
However, other types of efficient representations in terms of a tractable number of coefficients can also be of interest.
One instance of such representations is nonlinear parameterization of the polynomial coefficients in \emph{low-rank tensor formats}. These generalizations of low-rank matrix representations to higher-order tensors, such as tensor trains or hierarchical tensors, provide structured representations of full arrays of coefficients corresponding to Cartesian product index sets. They can also be interpreted as representing problem-adapted basis functions in each coordinate.

In approximating solutions to parameter-dependent elliptic problems, low-rank structures can play a role that is in a certain sense complementary to sparse expansions. 
Sparse polynomial approximations can give favorable asymptotic convergence rates in terms of the asymptotic decay of $\norm{\theta_j}_{L_\infty}$, and can thus take advantage of \emph{anisotropy} in the parameter dependence.
However, in cases where this decay sets in only after a certain plateau, so that a sizable number of the first parameters have comparable influence, sparsity-based methods are less effective. 

Conversely, low-rank approximations are observed to typically work well in cases of \emph{i\-so\-tro\-pic} parameter dependence, that is, with finitely many parameters of similar influence. This has been observed, for example, in \cite{BachmayrCohen:17}. 
However, in cases with an anisotropic dependence on \emph{infinitely} many parameters, they are typically comparably less efficient due to the substantial computational overhead of handling low-rank representations. 
In particular, when a low-rank separation is used between spatial and parametric variables, in the anisotropic case, the required approximation ranks can be too large to yield a gain in comparison to a direct sparse polynomial approximation of the solution \cite{BachmayrCohenDahmen:18}.

In problems of practical relevance, both isotropic and anisotropic influence of parameters can be present at the same time.
In other words, in problems with infinitely many parameters and decay $\norm{\theta_j}_{L_\infty} \to 0$, this decay often has a pronounced \emph{pre-asymptotic} regime with essentially isotropic parameter dependence, that is, for some $J$ and for $1 \leq j \leq J$, the values, $\norm{\theta_j}_{L_\infty}$ are of similar size so that the first $J$ parameters have comparably strong influence. This is the case, for example, in problems where $a$ represents a random field with \emph{short correlation length}, so that realizations exhibit strong oscillations within the domain $D$. Although the asymptotic convergence rate of sparse polynomial approximations then still only depends on the asymptotic decay of the $\norm{\theta_j}_{L_\infty}$, the total number of coefficients will generally depend exponentially on $J$, and thus exhibit a curse of dimension with respect to the number $J$ of leading parametric variables. 

The central question addressed in this paper is thus how low-rank tensor representations can be used to avoid exponential scaling in $J$, while at the same time maintaining the more favorable asymptotic computational costs of sparse polynomial approximations for the remaining parameters with $j > J$. The two main contributions of this work are the following:
\begin{enumerate}[(i)]
\item We propose a new form of low-rank tensor representation for problems with infinitely many parameters that preserves some key advantages of sparse polynomial approximations.
\item For low-rank approximations using this new structure, we analyze an adaptive low-rank solver with convergence and quasi-optimality guarantees for the generated ranks and the discretizations in the tensor modes.
\end{enumerate}

\subsection{A new low-rank representation}

We introduce a tensor representation for problems with infinitely many parameters based on a separate treatment of $(y_1,\ldots,y_J)$, with $J$ as above, which we call the \emph{dominant parameters,} and of the remaining parameters $(y_{J+1}, y_{J+2}, \ldots)$, which we call the \emph{tail parameters.} Here we take into account two observations: first, the use of low-rank representations in the dominant parameters can mitigate the curse of dimensionality with respect to $J$; second, we should not use a low-rank separation between spatial variables and tail parameters, since this would require large approximation ranks. This leads us to low-rank approximations of the basic form
\begin{equation}\label{eq:lowrankbasic}
  u(x, y_1, y_2,\ldots) \approx \sum_{k_1,k_2,\ldots,k_J} u^{(0)}_{k_1}(x, y_{J+1}, y_{J+2}, \ldots) \,u^{(1)}_{k_1,k_2}(y_1) \,u^{(2)}_{k_2,k_3}(y_2)  \cdots u^{(J)}_{k_J}(y_J) \,.
\end{equation} 
Here the summation is over $k_j = 1,\ldots, r_j$ with rank parameters $r_j \in \N_0$ for $j =1,\ldots, J$. While \eqref{eq:lowrankbasic} corresponds to the \emph{tensor train} format, we will also consider slightly more general structures in the \emph{hierarchical tensor format}. The main point, however, is that we separate the dominant parametric variables in a high-order low-rank tensor format, whereas the spatial variable $x$ and the tail parameters are kept in a single tensor mode. 

The univariate functions $u^{(j)}_{k_j,k_{j+1}}$ for $j=1,\ldots,J$ can be approximated in a straightforward manner by polynomials.
However, the functions $u^{(0)}_{k_1}$ for $k_1 =1, \ldots, r_1$ still depend on countably many variables. Since they depend anisotropically on the parameters, they can be treated efficiently by sparse polynomial approximations. In our present case, for each $k_1$, these approximations take the form
\begin{equation}\label{eq:sparseinlowrank}
   u^{(0)}_{k_1} (x, y_{J+1}, y_{J+2}, \ldots)  =  \sum_{\nu \in \Lambda} w_{k_1,\nu}(x)  \prod_{j > J} L_{\nu_j} (y_j) ,
\end{equation}  
where $L_{\nu_j}$ is the Legendre polynomial of degree $\nu_j$, and where the spatial coefficient functions $w_{k_1, \nu}$ are piecewise polynomials (such as finite element or wavelet approximations). The best possible asymptotic convergence rate of expansions as in \eqref{eq:sparseinlowrank}, including also spatial discretizations, can be achieved under certain additional structural conditions on the functions $\theta_j$, see \cite{BachmayrCohenDungSchwab:17}. To this end, it is important that an independent finite element mesh (or subset of wavelet basis elements) can be used for each $\nu$ to represent $w_{k_1,\nu}$. 

Here an additional issue comes into play: low-rank tensor arithmetic requires repeated linear combinations of $u^{(0)}_{k_1}$ for different $k_1$, for example for orthogonalization. While the finite element meshes thus may vary depending on $\nu$, both these meshes and the subset $\Lambda$ of polynomial degrees need to be the same for all $k_1$ (since otherwise they will be merged during the computation). Altogether, these requirements are accommodated by the representations combining \eqref{eq:lowrankbasic} with \eqref{eq:sparseinlowrank} that we use in what follows.

\subsection{Stochastic Galerkin discretization}

Various approaches to numerical constructions of combined low-rank and sparse approximations as described above are possible. Here we focus on stochastic Galerkin methods, which are based on variational formulations on the full spatial-parametric domain $D \times Y$, since these are most suitable for adaptive refinement of discretizations using residual information.

As a consequence of \eqref{UEA}, the parameter-dependent solution $u$ can be regarded as an element of 
\[   \mathcal{V} = L_2(Y, V; \sigma)  \cong V \otimes L_2(Y;\sigma) ,  \] 
where $\sigma$ is a suitable probability measure on $Y$. This enables expansions of $u$ in terms of orthonormal bases of $L_2(Y;\sigma)$, with coefficients in $V$.
Our main requirement on $\sigma$ is that it can be written as a product of measures on $[-1,1]$, which enables the use of \emph{product} orthonormal bases of $L_2(Y;\sigma)$. For simplicity of presentation, we take $\sigma$ to be the uniform measure in what follows, which amounts to assuming independent, identically distributed $y_j \sim \mathcal{U}(-1,1)$ for each $j \in \N$.
With the $L_2(Y;\sigma)$-orthonormal product Legendre polynomials
\[
   L_{\nu}(y) = \prod\limits_{i \in \cI}L_{\nu_i}(y_i), 
   \quad
    \nu \in \mathcal{F} = \bigl\{\nu \in \mathbb{N}_0^{\infty}: \nu_i 
   \neq 0 \text{ for finitely many } i \bigr\} ,
\]
we then obtain the orthonormal basis expansion
\begin{equation}
    \label{sparse_approximation}
    u(x,y) = \sum_{\nu \in \mathcal{F}}u_{\nu}(x)L_{\nu}(y), \quad u_{\nu}(x) = \int_{Y}u(x,y)L_{\nu}(y)\sdd\sigma(y) \in V \,.
\end{equation}

The solution $u$ of \eqref{eq:elliptic}, as defined pointwise for each given $y \in Y$, can also be characterized by the spatial-parametric
 variational formulation: find $u \in \mathcal{V}$ such that 
\begin{equation}
    \label{eq:elliptic_vf}
    \int_{Y}\int_{D} a(x,y) \nabla u(x,y) \cdot \nabla v(x,y) \sdd x \sdd \sigma(y) = \int_{Y} \int_D f(x) \, v(x ,y) \sdd x \sdd \sigma(y)
\end{equation}
for all $v \in \mathcal{V}$. Stochastic Galerkin discretizations are obtained by replacing $\mathcal{V}$ by a finite-dimensional subspace.
In what follows, our objective is to use adaptively refined stochastic Galerkin discretizations to find approximations of the exact solution $u$ given by \eqref{eq:elliptic_vf} in a low-rank form as in \eqref{eq:lowrankbasic} with sparse expansion \eqref{eq:sparseinlowrank} in the first tensor mode.
Altogether, this requires identifying the discretizations used in each tensor mode as well as the appropriate rank parameters for the tensor representations, while at the same time computing the lower-order component tensors of the low-rank approximation.

\subsection{Relation to existing works}\label{sec:previouswork}

What distinguishes the approach of this paper from previous works on low-rank tensor approximations for parametric PDEs is that we do not separate the spatial variable from the tail parameters. In contrast, a standard approach is to introduce a low-rank separation between spatial and all parametric variables, which amounts to considering approximations of the form
\begin{equation}\label{eq:lrstandardform}
   u(x, y_1, y_2, \ldots) \approx
     \sum_{k=1}^r u^{\mathrm{(x)}}_k(x)  \,u^{\mathrm{(y)}}_k (y_1,y_2,\ldots),
\end{equation}
where depending on the particular representation format, the $u^{\mathrm{(y)}}_k$ can be decomposed further into tensors of lower order (such as in a tensor train or hierarchical tensor representation).

Approximations of the form \eqref{eq:lrstandardform} are also the basis of projection-based model order reduction methods. One example is the reduced basis method, where functions $u_k^{\mathrm{(x)}}$ for $k = 1, \ldots, r$ are found by point evaluations for fixed values of the parameters $y = (y_1, y_2, \ldots)$. The corresponding functions $u_k^{\mathrm{(y)}}$ of the parameters are then given implicitly by Galerkin projection.
In certain cases with finitely many parameters, best approximations of this form (using uniform or $L_2$-norms on the parameter domain) are known to converge exponentially with respect to $r$; see, for example, \cite{BachmayrCohen:17}. 

In contrast, in problems with infinitely many parameters, as observed in \cite{BachmayrCohenDahmen:18}, the best approximation errors typically decay only algebraically with respect to $r$.
This leads to a first limitation of any low-rank tensor approximation of the form \eqref{eq:lrstandardform} that separates all parametric variables from the spatial variables: the efficiency of the approximation is restricted by the value of $r$ required for the desired accuracy (and hence, by the number of required spatial basis functions $u^{\mathrm{(x)}}_k$). 

A second, slightly more subtle limitation concerns restrictions on spatial discretizations. Numerical methods that operate on approximations of the form \eqref{eq:lrstandardform} require repeated orthogonalizations of intermediate results for the functions $u^{\mathrm{(x)}}_k$. This means that there is no benefit in choosing different spatial discretizations for these functions, since these would be merged in the linear combinations required for orthogonalizations.  However, as the results in \cite[Sec.~7]{BachmayrCohenDungSchwab:17} show, using an independent spatial discretization for each term in an expansion can lead to improved rates of space-parameter approximation. 
The practical restriction to a single spatial mesh in \eqref{eq:lrstandardform} may thus limit the attainable convergence rate with respect to the total number of coefficients in the case of infinitely many parameters.

We address both limitations by, instead of separating spatial and all parametric variables as in a low-rank approximation \eqref{eq:lrstandardform}, using a low-rank separation only of spatial and \emph{dominant} parametric variables as in \eqref{eq:lowrankbasic} and treating the \emph{tail} parametric variables (which, together with the spatial variables, are in the first tensor mode) by a sparse polynomial expansion as in \eqref{eq:sparseinlowrank}. 

Our main focus in this work is on the convergence and complexity analysis of adaptive methods for generating combined sparse and low-rank approximations of this particular kind. 
Adaptive schemes based on stochastic Galerkin discretizations for sparse polynomial approximations \eqref{sparse_approximation} have been considered in numerous works, for example in \cite{Gittelson:13,EGSZ:14,BachmayrCohenDahmen:18,CPB:19,BPRR19,BachmayrVoulis:22,BEEV:24}.
Methods for low-rank approximation have been proposed in the case of finitely many parameters in \cite{KhoromskijSchwab11,Matthies:12,KressnerTobler,DKLM:15}, and for infinitely many parameters in \cite{EPS:17,BachmayrCohenDahmen:18,EMPS:20}, in each case using the approach \eqref{eq:lrstandardform} with separation of spatial and parametric variables.

\subsection{A novel adaptive low-rank solver}

The method that we consider here differs from known low-rank solvers not only in that it is based on the new approximation format \eqref{eq:lowrankbasic} and \eqref{eq:sparseinlowrank}, but also in the basic construction of the adaptive scheme, in particular concerning the interaction between low-rank solver and adaptive discretization refinement.

The method that we consider here is based on iterative refinements of Galerkin discretizations. Our main result is that we show quasi-optimality both of ranks in the tensor representations and of discretization sizes in the sparse representations in the tensor modes. In other words, we estimate these quantities by fixed multiples of the ranks or discretization sizes of best approximations of comparable accuracy.
Results of a similar kind have been obtained before for adaptive low-rank methods of a different type, for general elliptic operator equations in  \cite{BachmayrDahmen:14} and for parametric problems in \cite{BachmayrCohenDahmen:18}. However, the new adaptive scheme that is tailored to our particular approximation format for parametric problems yields two major improvements over these previous results.

The first improvement concerns a tighter control of approximations ranks. Rather than on intermittent recompression steps as in the iterative scheme used in \cite{BachmayrDahmen:14,BachmayrCohenDahmen:18}, where substantial rank growth in the steps between recompressions is not ruled out, 
we use a low-rank solver based on tensor soft thresholding as in \cite{BachmayrSchneider:17}. In this work, we combine this technique with adaptive discretization refinement. This leads to substantially improved estimates for the largest ranks of intermediate quantities that are produced by the iteration.

The second improvement is in ensuring optimality of discretizations in the tensor modes: In the approximations of the form \eqref{eq:lowrankbasic}, \eqref{eq:sparseinlowrank}, for the functions $u^{(0)}_{k_1}$, $u^{(1)}_{k_1,k_2}$, \ldots, $u^{(J)}_{k_J}$, we adaptively select suitable discretizations. Doing this efficiently is particularly challenging for the $u^{(0)}_{k_1}$ as in \eqref{eq:sparseinlowrank}, which generally are functions of countably many variables. Similarly as for the tensor ranks, the number of degrees of freedom used for the discretization in each tensor mode should be comparable to the minimally required number for the achieved discretization error.
In the method introduced in \cite{BachmayrDahmen:14} and applied to parametric problems in \cite{BachmayrCohenDahmen:18}, this is achieved by eliminating degrees of freedom in coarsening steps after a certain number of iterations. For standard adaptive Galerkin methods based on direct basis expansions, it was shown in \cite{GantumurHarbrechtStevenson:07} that such coarsening is not needed when the refinement is done sufficiently conservatively.
However, a direct adaptation of this technique to low-rank tensor case as in \cite{BachmayrDahmen:14} cannot be shown to be optimal in the higher-dimensional case, as explained in detail in \cite{AliUrban:20}. In the present work, we exploit the particular structure of the parametric problems and of the adaptive low-rank approximations for the components in \eqref{eq:lowrankbasic}, \eqref{eq:sparseinlowrank} to achieve optimality without coarsening.

\subsection{Outline}

In Section \ref{sec:sparselowrank}, we give a detailed description of our particular format for sparse and low-rank representation of parametric solutions, as well as of operators in stochastic Galerkin discretizations. Section \ref{sec:softthresh} is concerned with the construction of a solver for Galerkin discretizations in low-rank format. To this end, we use a method based on soft thresholding of hierarchical tensor representations that yields quasi-optimal ranks. In Section \ref{sec:adaptgalerkin}, we consider how this can be combined with discretization refinement in each tensor mode without intermittent coarsening steps. In Section \ref{sec:numtests}, we show results of some numerical tests, where we compare in particular to standard tensor representations as in \eqref{eq:lrstandardform}.

\section{Sparse and low-rank approximations for parametric problems}\label{sec:sparselowrank}

We now turn to a more detailed description of the new approximation format for problems with infinitely many parameters given by \eqref{eq:lowrankbasic} and \eqref{eq:sparseinlowrank}. We begin by recapitulating the relevant basics on sparse expansions and low-rank approximations. Moreover, we also consider corresponding representations of parametric differential operators, which play a central role in the methods considered in the following sections.

\subsection{Basis expansion}

In what follows, our minimal assumption on the parametrized coefficient $a$ is the uniform ellipticity condition \eqref{UEA}, which ensures that $u$ is an element of $\cV = V \otimes L_2(Y;\sigma)$.
One natural way to construct a polynomial approximation is then to restrict the summation over $\nu$ in \eqref{sparse_approximation} to a finite subset  
$\Lambda \subset \mathcal{F}$,
\begin{equation}\label{eq:semidiscr}
    u(x,y) \approx \sum_{\nu \in \Lambda}u_{\nu}(x)L_{\nu}(y).
\end{equation}
In particular, choosing $\Lambda$ to contain the indices corresponding to the $n$ largest $\|u_{\nu}\|_V$ yields a semidiscrete best 
$n$-term approximation. 

Moreover, a suitable spatial approximation is required for the Legendre coefficients in \eqref{sparse_approximation}.
In the literature, common choices are finite elements \cite{GhanemSpanos:91,GhanemSpanos:97,EGSZ:14,BPRR19,BEEV:24,BEV} or wavelet approximations \cite{Gittelson:14,BachmayrCohenDahmen:18,BachmayrVoulis:22}.
To obtain a more transparent construction of the adaptive scheme, as a first step we focus here on discretizations by subsets of a wavelet Riesz basis of $V$. 
In what follows, we assume $\Psi = \{\psi_\lambda\}_{\lambda \in \cS}$, with a countable index set $\cS$, to be a Riesz basis of $V$. This means that there exist $c_\Psi, C_\Psi$ such that for all $\bv \in \ell_2(\cS)$,
\begin{equation}
    \label{Riesz_basis}
    c_{\Psi} \norm{ \bv }_{\ell_2} \leq \Bignorm{\sum_{\lambda \in \cS} \bv_{\lambda}\psi_{\lambda} }_V \leq C_{\Psi} \norm{ \bv }_{\ell_2} \,.
\end{equation}
Since $\{ L_\nu \}_{\nu \in \cF}$ is an orthonormal basis of $L_2(Y;\sigma)$, the product basis $\{\psi_{\lambda}\otimes L_{\nu}\}_{\nu \in \mathcal{F}, \lambda \in \cS}$ is a also a Riesz basis of $\cV$ with the same constants as in \eqref{Riesz_basis}. 
By the expansion
\begin{equation}
    \label{sparse_approximation_sequence}
    u = \sum\limits_{\nu \in \mathcal{F}} \sum_{ \lambda \in \cS} \mathbf{u}_{\nu,\lambda}\psi_{\lambda} \otimes L_{\nu} .
\end{equation}
converging in $\cV$ and the norm equivalence \eqref{Riesz_basis}, the problem of approximating $u$ in $\cV$ reduces to approximating the coefficient sequence $\mathbf{u}$ in $\ell_2(\cF\times\cS)$. 
One can again obtain best $N$-term approximations $\bu_N$ to $\bu$ by restricting the summation in \eqref{sparse_approximation_sequence} to a finite set $\Lambda \subset \mathcal{F} \times \cS$ with indices corresponding 
to the $N$ largest $\abs{\mathbf{u}_{\nu,\lambda}}$. By \eqref{Riesz_basis}, these are quasi-best approximations of $u$ in $\cV$. 

Plugging expansions of $u, v \in \cV$ as in \eqref{sparse_approximation_sequence} into \eqref{eq:elliptic_vf} leads to an equivalent operator problem on the sequence space $\ell_2(\cF\times \cS)$,
\begin{equation}
    \label{eq:sequence_form}
    \sum\limits_{i\in \{0\} \cup \cI} (\bM_i \otimes \bA_i) \mathbf{u} = \bbf,
\end{equation}
where $\bM_i: \ell_2(\mathcal{F}) \mapsto \ell_2(\mathcal{F})$ are defined by
\begin{equation}
    \label{def_Mi}
    \begin{aligned}
    \bM_0 & =\left(\int_Y L_\nu(y) L_{\nu^{\prime}}(y) \sdd \sigma(y)\right)_{\nu, \nu^{\prime} \in \mathcal{F}}, \\
    \bM_i & =\left(\int_Y y_i L_\nu(y) L_{\nu^{\prime}}(y) \sdd \sigma(y)\right)_{\nu, \nu^{\prime} \in \mathcal{F}}, \quad i \in \cI,
    \end{aligned}
\end{equation}
$\bA_i: \ell_2(\cS) \mapsto \ell_2(\cS)$ are given by
\begin{equation}
    \label{def_Ai}
    \begin{aligned}
    \bA_0 & = \left(\int_D \bar{a}(x) \nabla\psi_{\lambda^{\prime}}(x) \cdot \nabla\psi_{\lambda}(x) \sdd x\right)_{\lambda,\lambda^{\prime}\in \cS}, \\
    \bA_i & = \left(\int_D \theta_i(x) \nabla\psi_{\lambda^{\prime}}(x) \cdot \nabla\psi_{\lambda}(x)\sdd x\right)_{\lambda,\lambda^{\prime}\in \cS}, \quad i \in \cI,
    \end{aligned}    
\end{equation}
and furthermore, 
\[
    \bbf = \left(\int_D f \psi_{\lambda}\sdd x \int_Y L_{\nu}(y) \sdd \sigma(y)\right)_{(\nu,\lambda)\in \mathcal{F}\times \cS}.
\]
As a consequence of the three-term recursion relation for orthonormal Legendre polynomials on $[1,1]$,
\begin{equation}\label{eq:threeterm}
  yL_i(y) = \sqrt{\beta_{i+1}}L_{i+1}(y) + \sqrt{\beta_i}L_{i-1}(y), \qquad L_0 \equiv 2^{-1/2},\; L_{-1} \equiv 0,
\end{equation}
with $\beta_i = (4-i^{-2})^{-1}$, the matrices $\bM_i$ are bidiagonal,
\[
\begin{aligned}
& \bM_0=\left(\delta_{\nu, \nu^{\prime}}\right)_{\nu, \nu^{\prime} \in \mathcal{F}}, \\
& \bM_i=\left(\sqrt{\beta_{\nu_i+1}} \delta_{\nu+\mathrm e_i, \nu^{\prime}}+\sqrt{\beta_{\nu_i}} \delta_{\nu-\mathrm e_i, \nu^{\prime}}\right)_{\nu, \nu^{\prime} \in \mathcal{F}}, i \in \cI,
\end{aligned}
\]
with the Kronecker sequences $\mathrm e_i$ defined by $(\mathrm e_i)_j = \delta_{ij}$, $j \in \mathbb{N}$.

\subsection{Convergence of sparse best approximations}

The convergence of semi-discrete best $n$-term approximations by selection of $\Lambda_n \subset \cF$ with $\#\Lambda_n = n$ in \eqref{eq:semidiscr} is by now a well-studied problem. The convergence of such approximations in $L_2(Y,V,\sigma)$ can be quantified in terms of the $\ell_p$-summability of the coefficient norms $\|u_{\nu}\|_V$ for $\nu \in \cF$: If $(\|u_{\nu}\|_V)_{\nu \in \mathcal{F}} \in \ell_p(\cF)$ for a $p \in (0,2)$, then by Stechkin's lemma,
\[
    \biggnorm{ u - \sum_{\nu \in \Lambda_n} u_\nu L_\nu }_{L_2(Y,V,\sigma)} \leq C (n+1)^{-(\frac{1}{p}-\frac{1}{2})}, \quad C=\bignorm{(\norm{u_{\nu}}_V)_{\nu \in \mathcal{F}} }_{\ell_p}.
\]
A first result, obtained in \cite{CohenDeVoreSchwab:10,CohenDeVoreSchwab:11}, is that if \eqref{UEA} holds and $(\|\theta_j\|_{L_{\infty}})_{j\geq 1} \in \ell_p(\mathbb{N})$ with a $p \in (0,1)$, then $(\|u_{\nu}\|_V)_{\nu \in \mathcal{F}} \in \ell_p(\cF)$.
This is generally sharp, in particular for $\theta_j$ that have global supports in $D$. 

However, when the $\theta_j$ have localized supports, improved results are possible. In particular, this is the case when these functions have wavelet-like multilevel structure as in the following set of assumptions. 

\begin{assumption}
    \label{multilevel_structure}
    We assume $\theta_{\ell, k} \in L_{\infty}(D)$ for $\ell \geq 0$, $k \in \mathcal{I}_\ell$ such that the following conditions hold:
    \begin{enumerate}[{\rm (i)}]
        \item $ \diam\supp(\theta_j) \eqsim 2^{-|j|}$ for all $\ell$ and $k$,
        \item $ \# \mathcal{I}_\ell \lesssim 2^{d\ell}$ for all $\ell$,
        \item for some $\alpha >0$, one has $\|\theta_{\ell, k}\|_{L_{\infty}} \lesssim 2^{-\alpha \ell}$.
    \end{enumerate}
    With a bijection between such $(\ell, k)$ and $j \in \N$ defined by $j  = j(\ell, k) = \sum_{n = 0}^{\ell-1}\#\mathcal{I}_n + k$, and with $\theta_j = \theta_{\ell,k}$ for all $\ell$ and $k$, we obtain an enumeration $(\theta_j)_{j \in \N}$ ordered by increasing level parameter $\ell$.
\end{assumption}

For such $\theta_j$, as a particular case of a result in \cite{BachmayrCohenMigliorati:17}, for any $\alpha > 0$ we have $(\|u_{\nu}\|_V)_{\nu \in \mathcal{F}} \in \ell_p(\mathcal{F})$ for any $p > (\frac{\alpha}d + \frac12)^{-1}$, which yields
\[   \biggnorm{ u - \sum_{\nu \in \Lambda_n} u_\nu L_\nu }_{L_2(Y,V,\sigma)} \leq C (n+1)^{-s} \quad \text{ for any $\displaystyle s < \frac\alpha{d}$.} 
\]

For a computationally implementable approximation, we replace each $u_{\nu}$ by an approximation from a finite-dimensional subspace $V_{\nu} \subset V$, and thus seek a fully discrete approximation of $u$ from a space
\[
    \mathcal{V}_{N} = \left\{\sum\limits_{\nu \in \Lambda_n}u_{\nu}L_{\nu}: u_{\nu}\in V_{\nu}, \nu \in \Lambda_{\nu}\right\} \subset L_2(Y,V,\sigma),
\]
with error estimates in terms of total number of degrees of freedom $N = \sum_{\nu \in \Lambda_n}\dim(V_{\nu})$. Here $s$ depends not only on the $\ell_p$-summability of $(\|u_{\nu}\|_V)_{\nu \in \cF}$,
but also on higher-order norms of the coefficients and on the order of approximation of the spaces $V_{\nu}$.
As shown in \cite[Cor.~8.8]{BachmayrCohenDungSchwab:17}, under Assumption \ref{multilevel_structure} with $\theta_j \in W^{1,\infty}(D)$ and additional conditions on $f$ and $D$ (see \cite[Sec.~7]{BachmayrCohenDungSchwab:17}), for $\alpha \in (0,1]$ we have
\begin{equation}
    \label{fully_discrete_approximation_rate}
    \min _{v \in \mathcal{V}_{N}}\|u-v\|_{\mathcal{V}} \leq C N^{-s} \quad \text { for any } s< \begin{cases}\displaystyle \frac{\alpha}{d}, & d \geq 2, \\[12pt] \displaystyle\frac{2}{3} \alpha, & d=1. \end{cases}
\end{equation}

In particular, for $d \geq 2$, there is no loss in the asymptotic convergence rate in passing from the semidiscrete to the fully discrete approximation. This can be interpreted as a high degree of sparsity in the spatial discretizations as a consequence of the localized supports of the functions $\theta_j$. The required separate adaptive choice of the discretization subspace $V_\nu$ for each $\nu$ can be achieved computationally at near optimal costs using wavelet or finite element discretizations, as shown in \cite{BachmayrVoulis:22,BEEV:24,BEV}. 

However, this favorable result depends on the decay of $\norm{ \theta_j }_{L_\infty}$; in cases as outlined in the previous section, with a preasymptotic range $j=1,\ldots, J$ of dominant parameters followed by tail parameters with the expected decay, the constant $C$ in \eqref{fully_discrete_approximation_rate} can be expected to be large and may grow exponentially on $J$.
Low-rank tensor approximations can be effective in problems with many dominant parameters (as observed, for example, in \cite{BachmayrCohen:17}). We thus combine the above sparse expansions with a particular form of the hierarchical tensor format.

\subsection{Hierarchical tensors}

The starting point for low-rank approximation of $u$ is to regard the coefficient sequence $\bu = ( \bu_{\nu,\lambda} )_{\nu \in \cF , \lambda \in \cS}$ as a tensor on an appropriate product index set. For example, considering $\bu$ as a bi-infinite matrix with row indices $\nu$ and column indices $\lambda$, which induces a Hilbert-Schmidt operator from $\ell_2(\cS)$ to $\ell_2(\cF)$, we obtain the singular value (or Hilbert-Schmidt) decomposition
\[
   \bu_{\nu, \lambda} = \sum_{k=1}^\infty \sigma_k \bu_{k, \lambda}^{(\mathrm{x})} \bu_{k, \nu}^{(\mathrm{y})} , \qquad \lambda \in \cS, \; \nu \in \cF,
\]
with non-negative and decreasing singular values $(\sigma_k)_{k \in \N}$ and orthonormal systems $(\bu_{k}^{\mathrm{x}})_{k \in \N}$ and $(\bu_{k}^{\mathrm{y}})_{k \in \N}$ in $\ell_2(\cS)$ and $\ell_2(\cF)$, respectively, which are called left and right singular vectors. This yields a low-rank representation of $u$,
\begin{equation}\label{eq:lrstandarddiscrete}
  u = \sum_{k = 1}^\infty \sigma_k \left(\sum\limits_{\lambda \in \cS} \mathbf{u}^{(\mathrm{x})}_{k,\lambda} \psi_{\lambda} \right)
    \otimes \left(\sum\limits_{\nu \in \cF} \mathbf{u}^{(\mathrm{y})}_{k,\nu} L_{\nu}\right).
\end{equation}
Directly truncating the sum over $k$ yields a best approximation of $\bu$ in $\ell_2(\cF\times \cS)$ and thus a quasi-best approximation of $u$ in $\cV$ (up to constants depending on the spatial Riesz basis). This is a low-rank form approximation of the basic form \eqref{eq:lrstandardform}. As mentioned in Section \ref{sec:previouswork}, in problems with infinitely many parameters (that is, with unbounded $\cI$), the decay of the singular values will generally only be algebraic. Thus when taking also the costs of computing the singular vectors into account, one cannot necessarily expect any gain from low-rank approximations in this form for unbounded $\mathcal I$, as considered in detail in \cite{BachmayrCohenDahmen:18}.

Numerically realizable approximations also require discretizations in the spatial and parametric variables, which corresponds to restricting the summations in \eqref{eq:lrstandarddiscrete} to finite subsets of $\cS$ and $\cF$. 
For algorithmic reasons related to required orthogonalizations, these subsets cannot be chosen to depend on $k$, that is, we need to operate on a single spatial discretization. For this reason, such low-rank approximations cannot achieve the ideal fully adaptive asymptotic convergence rates for sparse approximations as in \eqref{fully_discrete_approximation_rate}, which require an individually adapted subset of $\cS$ for each value of $\nu \in \cF$, corresponding to separate spatial discretizations for each Legendre coefficient $u_\nu$. In what follows, we thus consider different tensor representations of $\bu$ that avoid a direct separation of the spatial from the parametric variables.

Here, we are interested in the case $\cI = \N$, where the first $J$ coefficients have a comparable influence on the solution and the rest have decreasing influence. We introduce a modified hierarchical tensor approximation
where the dominant $J$ parameters $y_1,\dots,y_J$ are assigned to separate modes, while the spatial variables $x$ and the remaining tail parameters $y_{J+1},y_{J+2},\dots$ are kept in a single tensor mode, which we refer to as the sparse expansion mode, and which can be handled by a standard sparse polynomial approximation in the tail parametric variables.

\subsubsection{Hierarchical tensor format}

We first briefly recall the definition and some basic properties of the hierarchical tensor format. With $J$ the number of dominant parametric variables, we consider $\bu$ as a tensor with the modes corresponding to these variables numbered as $1,\ldots, J$, and an additional first tensor mode, numbered $0$, comprising the variables $x$, $y_{J+1}, y_{J+1},\ldots$
For the set of all tensor modes we accordingly write $\hat{\cI}= \{0, 1,\dots,J\}$.

For convenience, we denote the index sets associated to tensor modes as follows: $\mathcal{G}_{0}= \cF \times \cS$, $\mathcal{G}_1= \N_0$, \ldots, $\mathcal{G}_J= \N_0$ and set $\cG= \cG_{0} \times\cdots \times \cG_J$, which can again be identified bijectively with $\cF\times \cS$. 
Grouping the variables according to these modes, the resulting approximation to the solution map of \eqref{eq:elliptic} takes the form
\begin{equation}
    \label{HT-Sparse}
    u(x,y_1,\dots) \approx \sum_{k_0=1}^{r_0} u_{k_0}^{(0)}(x,y_{J+1},\dots)  \left( \sum_{k_1=1}^{r_1} \cdots \sum_{k_J=1}^{r_J}\ba_{k_0,k_1,\dots,k_J} \prod_{j=1}^J u_{k_j}^{(j)}(y_j)\right)
\end{equation}
with scalar coefficients $\ba_{k}$ for $k \in \prod_{i=0}^{J} \{ 1,\ldots, r_i\}$.
Here $u_{k_0}^{(0)}$ are given by sparse polynomial expansions, with finite $\Lambda_{0} \subset \cG_0$, as
\begin{equation}
    \label{HT-Sparse-expansion}
    u_{k_0}^{(0)}(x,y_{J+1},\dots) = \sum\limits_{(\bar \nu,\lambda) \in \Lambda_{0}} \mathbf{u}_{(\bar \nu,\lambda),k_0}\psi_{\lambda}(x) \left(\prod\limits_{i = 1}^\infty L_{\bar \nu_i}(y_{J + i})\right)\,.
\end{equation}
For $j \in \{1,\dots,J\}$, the further tensor components $u_{k_j}^{(j)}(y_j)$ are discretized by univariate Legendre polynomials $\{L_k\}_{k\in \Lambda_j}$
with finite $\Lambda_{j} \subset \cG_j$ and coefficients $(\mathbf{U}^{(j)}_{\nu_j,k_j})_{\nu_j \in \Lambda_{j}}$. 

We thus obtain a fully discrete approximation $u_{\varepsilon}$ represented by its basis coefficients $\bu_{\varepsilon} \in \ell_2(\cG)$ in terms of 
\begin{equation}
    \label{Tucker-Tensor-coefficients}
    \bu_{\varepsilon} = \sum_{k_0=1}^{r_0}\sum_{k_1=1}^{r_1}\cdots \sum_{k_J=1}^{r_J} \mathbf{a}_{k_0,k_1,\dots,k_J} \bigotimes_{j = 0}^J \mathbf{U}^{(j)}_{k_j},
\end{equation}
where $\mathbf{U}^{(0)}_{k_0} = (\mathbf{U}_{(\bar \nu,\lambda),k_0})_{(\bar\nu,\lambda) \in \Lambda_{0}}$ and $\mathbf{U}^{(j)}_{k_j} = (\mathbf{U}^{(j)}_{\nu_j,k_j})_{\nu_j \in \Lambda_{j}}$ for $j \in \{1,\dots,J\}$ are called the \emph{mode frames} of the representation.
The minimal $(r_0, r_1,\ldots,r_J)$ such that $\bu_{\varepsilon}$ can be represented in the form \eqref{Tucker-Tensor-coefficients} are the \emph{multilinear ranks} of $\bu_{\varepsilon}$.

In \eqref{Tucker-Tensor-coefficients}, if the high-order core tensor $(\mathbf{a}_{k_0,k_1,\ldots,k_J})_{k_0,k_1,\ldots,k_J}$ is represented directly by its entries, this is the classical \emph{Tucker tensor format}. Generally, the number of degrees of freedom of the Tucker format increases exponentially with respect to the tensor order, which in the present case is $J+1$. We thus instead use the hierarchical tensor format \cite{HackbuschKuehn:09}, which amounts to a further decomposition of the core tensor $\mathbf{a}_{k_0,k_1,\ldots,k_J}$
in terms of lower-order tensors. 
In the presence of additional, more restrictive low-rank approximability properties, this format allows for a more favorable dependence of the resulting complexity on $J$.

For the construction of the hierarchical tensor format for the set of tensor modes $\mathcal{\hat I}$, let $\T \subset 2^{\hat{\cI}}$ be a binary dimension tree for tensor order $J+1$, that is, $\T$ is a set of non-empty subsets of $\hat{\cI}$ such that the following conditions hold:
\begin{enumerate}[{\rm(i)}]
    \item $\hat{\cI} \in \T$, and for each $i \in \{0, 1, \dots, J\}$ we have $\{i\} \in \T$.
    \item Each node $\alpha \in \T$ is either a leaf or there exist disjoint $\alpha_1, \alpha_2 \in \T$ such that $\alpha = \alpha_1 \cup \alpha_2$.
\end{enumerate}
In what follows, we focus on the linear dimension tree
\begin{equation}
    \label{linear_tree}
    \T = \bigl\{\hat{\cI}, \{0\}, \{1,\dots,J\}, \{1\}, \{2,\dots,J\}, \dots, \{J-1\}, \{J\} \bigr\}.
\end{equation}
A visualization of the dimension tree \eqref{linear_tree} is given in Figure \ref{fig:linear_tree}. While for notational simplicity, we focus on this particular structure in what follows, our further considerations apply to arbitrary binary dimension trees.
\begin{figure}[htbp]
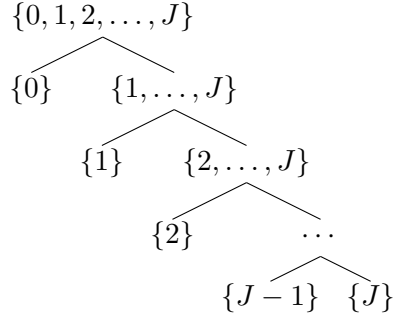

    \hspace{-3cm}
    \Tree [.$\{0,1,2,\dots,J\}$
        [.$\{0\}$ ]
        [.$\{1,\dots,J\}$
            [.$\{1\}$ ]
            [.$\{2,\dots,J\}$
                [.$\{2\}$ ]
                [.$\cdots$
                    [.$\{J-1\}$ ]
                    [.$\{J\}$ ]
                ]
            ]
        ]
    ]
    \caption{The linear dimension tree as defined in \eqref{linear_tree}.}
    \label{fig:linear_tree}
\end{figure}

In order to represent $\bu \in \ell_2(\cG)$ in the hierarchical tensor format associated to $\T$, we consider matricizations $\cM_{\alpha}(\mathbf{u})$
for each node $\alpha \in \T$. With $\alpha^c= \hat{\cI} \backslash \alpha$, we define $\cM_{\alpha}(\mathbf{u})$ as a Hilbert-Schmidt operator
\[
    \cM_{\alpha}(\mathbf{u}): \ell_2(\times_{i \in \alpha^c}\mathcal{G}_i) \mapsto \ell_2(\times_{i \in \alpha}\mathcal{G}_i)
\]
induced by $\bu$ regarded as a bi-infinite matrix, where the modes in $\alpha$ are taken as row indices and those in $\alpha^c$ as column indices.
Note that $\rank(\cM_{\alpha}(\bu)) = \rank(\cM_{\alpha^c}(\bu))$. We define the set of \emph{effective edges} of the tensor format
\[
    \mathbb{E} = \bigl\{ \{ \alpha, \alpha^c\}: \alpha \in \T \backslash \hat{\cI} \bigr\}.
\]
The elements of $\mathbb{E}$ correspond to the matricizations resulting from $\T$ that are not the same up to transposition; in other words, these are the edges in $\T$, with the exception that the two edges below the root element $\hat{\cI}$ are a single effective edge. 
It is then easy to verify that for $E = \# \mathbb{E}$, we have $E = 2J - 1$. 

In what follows, we assume a fixed enumeration $(\{\alpha_t, \alpha_t^c\})_{t=1,\dots,E}$ of $\mathbb{E}$ such that $\alpha_t \in \T$ for $t = 1,\dots, E$. We abbreviate $\cM_t(\bu)= \cM_{\alpha_t}(\bu)$, denote the singular values of $\cM_t(\bu)$ by $\sigma_t(\bu)$, and write
\[
  \rank_t(\bu) = \rank \cM_t(\bu) =  \#\{k \in \N:\sigma_{t,k}\neq 0\} .
\] 
The hierarchical rank of $\bu$ corresponding to the hierarchical format with dimension tree $\T$ is then given by the tuple
\[
    \rank(\bu) = \bigl(\rank_t(\bu) \bigr)_{t=1,\dots,E} .
\]

In the case of the linear dimension tree \eqref{linear_tree}, we obtain an explicit recursive decomposition of $\mathbf{a}_{k_0,k_1,\dots,k_J}$ in \eqref{Tucker-Tensor-coefficients}. With
\[
    \tilde{r}_i = \rank \bigl( \mathcal{M}_{\{i,\dots,J\}}(\bu_{\varepsilon}) \bigr), \quad i = 1,\dots,J,
\]
we obtain (see, e.g., \cite[Sec.~2.5]{Bachmayr:23}) that $\ba_{k_0,k_1,\dots,k_J}$ can be decomposed in the form
\[
    \ba_{k_0,k_1,\dots,k_J} = \sum_{l_1=1}^{\tilde{r}_1} \mathbf{C}^{(1)}_{k_0,l_1} \sum_{l_2=1}^{\tilde{r}_2} \mathbf{C}^{(2)}_{k_1,l_2,l_1} \cdots 
    \sum_{l_{J}=1}^{\tilde{r}_{J}} \mathbf{C}^{(J)}_{k_{J-1},l_J,l_{J-1}},
\]
with component tensors $\mathbf{C}^{(i)}$, $i = 1,\ldots, J$, of order at most three. Finally, we represent the approximate solution coefficients $\bu_{\varepsilon}$ in hierarchical tensor format with sparse expansions in the form
\begin{multline}
    \label{HT-Sparse-coefficients}
        (\bu_{\varepsilon})_{(\bar{\nu},\lambda),\nu_1,\dots,\nu_J}  = \sum_{k_0=1}^{r_0}\sum_{k_1=1}^{r_1}\cdots \sum_{k_J=1}^{r_J} \sum_{l_1=1}^{\tilde{r}_1} \cdots \sum_{l_J=1}^{\tilde{r}_J}  \mathbf{C}^{(1)}_{k_0,l_1}  \mathbf{C}^{(2)}_{k_1,l_2,l_1} \cdots  \\
     \cdots    \times \mathbf{C}^{(J)}_{k_{J-1},l_J,l_{J-1}} \mathbf{U}^{(0)}_{(\bar{\nu},\lambda),k_0} \mathbf{U}^{(1)}_{\nu_1,k_1} \cdots \mathbf{U}^{(J)}_{\nu_J,k_J},
\end{multline}
supported on $\Lambda = \Lambda_0 \times \Lambda_1 \times \cdots \times \Lambda_J \subset \cG$. 

Note that this type of tensor representation does not involve any explicit truncation of the parametric dimension: The sparse polynomial approximations corresponding to $\Lambda_0$ can depend on any of the tail parametric variables $y_{J+1}, y_{J+2}, \ldots$; this is in contrast to standard approaches to low-rank approximations to this problem, which usually start from a separation of spatial and parametric variables as in \eqref{eq:lrstandardform}. For example, in the methods in \cite{EPS:17,EMPS:20}, the parametric variables are restricted to a subset $y_1, \ldots, y_{J_\mathrm{trunc}}$ with sufficiently large $J_{\mathrm{trunc}}$, and each of these remaining parametric dimensions is assigned to a separate tensor mode in the hierarchical format.

\subsubsection{Stochastic Galerkin discretization}

We now cast the operator equation 
\begin{equation}
    \label{operator-equation}
    \bA \bu = \bbf
\end{equation}
in a form that is compatible with the above tensor approximation format, where the solution $\bu \in \ell_2(\cG)$ is represented in a hierarchical tensor format with a sparse polynomial expansion. We observe that the operator $\bA$ can be written in the form
\begin{equation}
    \label{operator-TT-form}
    \bA = \bar{\bA} \otimes \bI_1 \otimes \cdots \otimes \bI_J + \bB_1 \otimes \bN_1 \otimes \bI_2 \otimes \cdots \otimes \bI_J + \cdots + \bB_J \otimes \bI_{1} \otimes \cdots \bI_{J-1} \otimes \bN_J,
\end{equation}
where $\bar\bA \colon \ell_2(\cG_{0}) \mapsto \ell_2(\cG_{0})$, $\bB_j \colon \ell_2(\cG_{0}) \mapsto \ell_2(\cG_{0})$, $j \in \{1,\dots,J\}$ are defined by
\begin{equation}
    \bar{\bA} = \bM_0 \otimes \bA_0 + \sum_{i = J+1}^\infty (\bM_i \otimes \bA_i), \quad \bB_{j} = \bM_0 \otimes \bA_j,
    \label{def_A_tensor}
\end{equation}
with $\bM_i$ and $\bA_i$ defined as \eqref{def_Mi} and \eqref{def_Ai}, respectively. In addition, $\bI_j$ is the identity operator on $\ell_2(\cG_j)$ and $\bN_j : \ell_2(\cG_{j}) \mapsto \ell_2(\cG_j)$ are given by
\begin{equation}
    \label{def_M_tensor}
    \bN_j =\left(\int_{-1}^{1} y_j L_{\nu_j}(y_j) L_{\nu_j^{\prime}}(y_j) \sdd \sigma(y_j)\right)_{\nu_j, \nu_j^{\prime} \in \cG_j}, \quad j \in \{1,\dots,J\}.
\end{equation}

For any given error tolerance $\varepsilon$, we aim to derive an approximation $\bu_{\varepsilon}$ such that $\norm{\bu - \bu_\varepsilon}\leq \varepsilon$ in the form \eqref{HT-Sparse-coefficients} with quasi-optimal ranks and quasi-optimal discretization sizes $\sum_{j=0}^J \#\Lambda_j$. We will realize this by an adaptive Galerkin method with residual-based discretization refinement, where the Galerkin problem in each step is solved by an iterative scheme operating on the low-rank representations in hierarchical format introduced above.

\section{Soft-thresholding low-rank solver for parametric problems}\label{sec:softthresh}

\label{Sect:soft}
We briefly review the iterative method based on soft thresholding of hierarchical tensors introduced in \cite{BachmayrSchneider:17}. This approach combines convergent iterative schemes in low-rank format with an adaptive adjustment of thresholding parameters, such that the arising ranks of the approximations remain quasi-optimal. 
In our context, we use this method as a solver for Galerkin discretizations that arise in steps of an adaptive scheme, which requires some minor modifications. For $\bv \in \ell_2(\cG)$, we use the notation $\norm{ \bv } = \norm{\bv}_{\ell_2}$.

\subsection{Soft thresholding of hierarchical tensors} 

We first introduce soft thresholding operators for sequences and matrices. Operator for hierarchical tensors is then defined by applying soft thresholding operator successively to each matricization.

For $\mathbf{x} \in \ell_2(\mathcal{I}_1 \times \mathcal{I}_2)$ with countable index sets $\mathcal{I}_1, \mathcal{I}_2$, we denote by  $(\sigma_i(\mathbf{x})) \in \ell_2(\N)$ the non-increasing, non-negative sequence of singular values of $\mathbf{x}$. The singular value decomposition of $\mathbf{x}$ takes the form
\[
\mathbf{x} =\sum_{k=1}^\infty \sigma_k(\mathbf{x}) \mathbf{U}^{(1)}_k \otimes  \mathbf{U}^{(2)}_k
\]
with orthonormal systems $\{ \mathbf{U}^{(i)}_k \}_{k\in\N}$ in $\ell_2(\mathcal{I}_i)$ for $i=1,2$.
Soft thresholding with parameter $\tau \geq 0$ is defined by applying the soft thresholding operation to its singular values, that is,
\begin{equation}
    \label{eq:softmat}
    \ST_{\tau}(\mathbf{x}) = \sum_{k=1}^\infty \max\{ \sigma_k(\mathbf{x}) - \tau, 0 \} \mathbf{U}^{(1)}_k \otimes  \mathbf{U}^{(2)}_k.
\end{equation}

For a hierarchical tensor $\bw$ with suitably numbered effective edges $\{\alpha_t, \alpha_t^c\} \in \mathbb{E}, \, t = 1,\dots,E$, the hierarchical tensor soft thresholding operator $\ST_{\tau}$ as introduced in \cite{BachmayrSchneider:17} is defined as the application of $\ST_{\tau}$ to each matricization, that is,
\begin{equation}
    \label{eq:softtensor}
    \ST_{\tau}(\bw)=\left(\ST_{E, \tau} \circ \cdots \circ \ST_{1, \tau}\right)(\bw),
\end{equation}
where $\ST_{t, \tau}(\bw) = \left( \cM_t^{-1} \circ S_{\tau} \circ \cM_t \right)(\bw)$, and $\cM^{-1}_t$ is the mapping that converts the corresponding matricization back to a tensor. 
Thus soft thresholding is applied to the singular values of matricizations, for which we use the abbreviation
\[
  \sigma_t(\bw) = \sigma\bigl( \cM_t(\bw) \bigr).
\]
For  details on the implementation of the operation \eqref{eq:softtensor}, with its efficiency depending also on a suitable enumeration of the effective edges, we refer to \cite[Sect. 3]{BachmayrSchneider:17}.

Different from hard thresholding, soft thresholding also decreases entries with absolute value above the threshold, which incurs an additional error. However, this operation is non-expansive, as shown in \cite[Proposition 3.2]{BachmayrSchneider:17}: for all $\bv, \bw \in \ell_2(\cG)$ and all $\tau \geq 0$, we have
\[
\norm{ \ST_{\tau}(\bw) - \ST_{\tau}(\bv) } \leq  \norm{ \bw - \bv }.
\]
As a consequence of this property, the composition of $\ST_{\tau}$ with a contractive mapping still yields a convergent fixed-point iteration with a modified fixed point.

In the case of the linear operator equation \eqref{operator-equation}, we consider a contractive mapping $\cT(\bw) = \bw - \omega \left(\bA \bw - \bbf \right)$ with a suitable scaling parameter $\omega > 0$ such that $\norm{ \Id - \omega \bA } \leq \rho < 1$, which implies
\begin{equation}
    \label{eq:contractive}
    \|\cT(\bw) - \cT(\bv)\| \leq \rho \|\bw - \bv\|, \quad \text{for all $\bw, \bv \in \ell_2(\cG)$.}
\end{equation}

As shown in \cite[Lemma 4.1]{BachmayrSchneider:17}, for any $\cT$ as in \eqref{eq:contractive} with unique fixed point $\bu$,
and for any $\tau >0$, there exists a unique $\bu_\tau \in \ell_2(\cG)$ such that $\bu_{\tau} = \ST_{\tau} \left(\cT(\bu_{\tau})\right)$, which satisfies
    \begin{equation}
        \label{eq:bound_soft_contractive}
        (1+\rho)^{-1}\|\ST_{\tau}(\bu) - \bu\| \leq \| \bu_{\tau} - \bu \| \leq (1-\rho)^{-1}\|\ST_{\tau}(\bu) - \bu\|.
    \end{equation}
Moreover, for any $\bw_0 \in \ell_2(\cG)$, for the thresholded fixed-point iteration
\begin{equation}
    \label{eq:soft_iter_basic}
    \bw_{k+1} = \ST_{\tau} \bigl(\cT(\bw_{k})\bigr),
\end{equation}
we have
    \[
        \|\bw_{k} - \bu_{\tau}\| \leq \rho^k \|\bw_0 - \bu_{\tau}\|,
    \]
that is, for any threshold $\tau$, the iteration converges at the same rate as the one for $\cT$ without thresholding, but to a modified fixed point.

To obtain convergence to $\bu$, we thus consider the non-stationary iteration
\begin{equation}
    \label{eq:soft_iter}
    \bw_{k+1} = \ST_{\tau_k} \bigl(\cT(\bw_k)\bigr),
\end{equation}
with a suitable choice of $\tau_k \rightarrow 0$. Evidently, a slow decrease in $\tau_k$ will impede the convergence of the iteration, whereas a rapid reduction in $\tau_k$ may result in iterates with very large tensor ranks. One could choose $\tau_k$ based on the decay of the singular value sequences $\sigma_t(\bu)$ of $\cM_t(\bu)$. In \cite[Theorem 4.4]{BachmayrSchneider:17}, an explicit choice for $\tau_k$ based on knowledge of this decay that yields quasi-optimal rank bounds is provided.
However, the precise decay of the sequences $\sigma_t(\bu)$ is typically not known, and \cite[Algorithm 2]{BachmayrSchneider:17} yields an a posteriori choice of $\tau_k$ that preserves the quasi-optimality of ranks.

\subsection{Application to Galerkin discretizations}

For $\Lambda \subset \cG$, we define the restriction operator $\Restr_{\Lambda} :\ell_2(\cG) \mapsto \ell_2(\cG)$ as pointwise multiplication by the indicator function of $\Lambda$, that is, $\Restr_\Lambda \bv$ equals $\bv$ on $\Lambda$ and vanishes on $\cG\setminus \Lambda$.

The Galerkin approximation of the solution $\bu$ of $\bA \bu = \bbf$ is defined as the unique solution $\bu_{\Lambda} \in \ell_2(\cG)$ with $\supp \bu_\Lambda \subseteq \Lambda$ of
\begin{equation}
   \bA_\Lambda  \bu_{\Lambda}  = \Restr_\Lambda \bbf.
\end{equation}
with the notation
\begin{equation}
 \bA_\Lambda = \Restr_\Lambda  \bA \Restr_\Lambda, \quad \bbf_\Lambda =  \Restr_\Lambda \bbf.
\end{equation}
As a consequence of the ellipticity and boundedness of operator $\bA$, the Galerkin discretization $\bA_\Lambda$ is again elliptic and bounded with the same constants on the subspace $\operatorname{range} (\Restr_\Lambda)$. Thus $\cT_\Lambda(\bw) = \bw - \omega \left(\bA_\Lambda \bw - \bbf_\Lambda \right)$ on this subspace also satisfies the contractivity property \eqref{eq:contractive} for the same values of $\omega$. Specifically, the spectrum of $\bA_{\Lambda}$ is contained in $[\gamma, \Gamma]$. 
with 
\begin{equation}
 \gamma = c_{\Psi}^2 r, \quad  \Gamma = C_{\Psi}^2 (2\|\bar{a}\|_{L_{\infty}} - r),
\end{equation}
with $r$ as in \eqref{UEA} and $c_\Psi, C_\Psi$ as in \eqref{Riesz_basis}. The choice $\omega = 2/(\gamma + \Gamma)$ then yields
\begin{equation}
    \label{eq:omega}
    \norm{ \Restr_\Lambda - \omega \bA_{\Lambda}  }   \leq \frac{\Gamma - \gamma}{\Gamma + \gamma} =: \rho < 1
\end{equation}
independently of $\Lambda$.
The corresponding iterative scheme with $\cT$ replaced by $\cT_\Lambda$,
\begin{equation}
    \label{eq:soft_iter_subspace}
    \bw_{k+1}  = \ST_{\tau_k} \bigl(\cT_\Lambda(\bw_k )\bigr),
\end{equation}
with $\tau_k \to 0$ thus converges to $\bu_\Lambda$ for any $\bw_0$ with $\supp \bw_0 \subseteq \Lambda$.

For finite $\Lambda \subset \cG$ of Cartesian product form $\Lambda = \Lambda_0 \times \cdots \times \Lambda_J$, we have
\begin{equation}\label{eq:cartesianrestr}
  \Restr_\Lambda = \Restr_{\Lambda_0} \otimes \cdots \otimes \Restr_{\Lambda_J},
\end{equation}
so that $\bA_\Lambda$ has the same low-rank structure as $\bA$. It is not difficult to see that also $\sigma_{t,k}( \Restr_\Lambda \bu) \leq \sigma_{t,k}(\bu)$ for $t = 1,\ldots,E$ and $k \in \N$. Note, however, that generally $\Restr_\Lambda \bu \neq \bu_\Lambda$, that is, the restriction of the exact solution $\bu$ need not be equal to the Galerkin solution $\bu_\Lambda$. In particular, the relation between $\sigma_t(\bu_\Lambda)$ and $\sigma_t(\bu)$ is less immediate.

Algorithm \ref{alg_GalerkinSolver} gives the realization of \eqref{eq:soft_iter_subspace} with adaptive adjustment of the thresholding parameter as in \cite[Algorithm 2]{BachmayrSchneider:17}. This scheme does not use a priori knowledge on the low-rank approximability of $\bu$, requiring only a tensor representation of $\bbf$, the action of $\bA$ on such representations, and the bounds $\gamma, \Gamma$ on the spectrum of $\bA$.

\begin{algorithm}
    \caption{$\STSolve(\bA_\Lambda,\bbf_\Lambda, \varepsilon)$}
    \label{alg_GalerkinSolver}
    \begin{flushleft}
        parameters: $\gamma$, $\Gamma$ as in \eqref{UEA_operator}, $\omega$, $\rho$ as in \eqref{eq:omega}, arbitrary $\nu, \theta \in (0,1)$ and $\tau_0 \geq E^{-1}\omega \|\bbf\|$. \\
        output: $\bw_{\Lambda}$ satisfying $\| \bA_{\Lambda} \bw_{\Lambda} - \bbf_{\Lambda} \| \leq \varepsilon$, final iteration counter $I$        
    \end{flushleft}
    \begin{algorithmic}[1]
        \State $\bw_0 := 0$, $\mathbf{s}_0 := \bA_{\Lambda}\bw_0 - \bbf_{\Lambda}$, $i:=0$
        \While {$\|\mathbf{s}_{i} \| > \varepsilon$}
            \State $\bw_{i+1} = \mathrm{ST}_{\tau_i}(\bw_i - \omega \mathbf{s}_i)$
            \State \label{alg_GalerkinSolver_residual} $\mathbf{s}_{i+1} := \bA_{\Lambda}\bw_{i+1} - \bbf_{\Lambda}$
            \If {$\| \bw_{i+1} - \bw_{i} \| \leq \frac{(1-\rho)\nu}{\Gamma \rho} \norm{\mathbf{s}_{i+1}}$}
                \State $\tau_{i+1} := \theta \tau_i$
            \Else
                \State $\tau_{i+1} := \tau_i$
            \EndIf
            \State $i \leftarrow i+1$
        \EndWhile
        \State $\bw_{\Lambda} := \bw_i$, $I := i$
    \end{algorithmic}
\end{algorithm}

The following result is a direct adaptation of \cite[Theorem 5.1]{BachmayrSchneider:17} to our present case of Galerkin discretizations.

\begin{theorem}
    \label{thm:stsolve}
    For a given finite support set $\Lambda \subset \cG$, the procedure $\STSolve$ given in Algorithm \ref{alg_GalerkinSolver} produces $\bw_{\Lambda} \in \ell_2(\cG)$ satisfying $\norm{ \bA_{\Lambda} \bw_{\Lambda} - \bbf_{\Lambda} } \leq \varepsilon$ in finitely many steps. In addition, if $\bu$ is the solution to \eqref{operator-equation}, then for the iterates $\bw_i$ for $i =1, \ldots, I$ of Algorithm \ref{alg_GalerkinSolver} we have the following:
    \begin{enumerate}[{\rm(i)}]
        \item If there exist $M>0$ and $0<p<2$ such that $\sigma_{t,j}(\bu_\Lambda) \leq M j^{-\frac1p}$ for $j \in \N$, $t=1,\dots,E$, then for a $\tilde{\rho} \in (0,1)$, with $\varepsilon_i = \tilde{\rho}^i$ and $s = \frac{1}{p}-\frac{1}{2}$, we have
            \[
                \|\bA_{\Lambda}\bw_i - \bbf_{\Lambda}\| \lesssim (J+1)\varepsilon_i, \quad
                \max\limits_{t=1,\dots,E}\rank_t (\bw_i) \lesssim (J+1)^2 M^{\frac{1}{s}} \varepsilon_i^{-\frac{1}{s}}
            \]
            for $i = 1,\ldots, I$.
            The constants depend on $\gamma, \Gamma, \nu, \theta$, $\tau_0$, and $p$.    
        \item If $\sigma_{t,j}(\bu_\Lambda) \leq Ce^{-cj^\beta}$, for $j\in \N$ and $t=1,\dots,E$, with $C,c,\beta>0$, then for a $\tilde{\rho} \in (0,1)$, with $\varepsilon_i = \tilde{\rho}^i$ we have 
            \[
                \|\bA_{\Lambda}\bw_i - \bbf_{\Lambda}\| \lesssim (J+1)\varepsilon_i, \quad
                \max\limits_{t=1,\dots,E}\rank_t (\bw_i ) \lesssim (J+1)^2 \bigl(1+\abs{ \ln\varepsilon_i} \bigr)^{\frac{1}{\beta}}
            \]
            for $i = 1,\ldots, I$.
            The constants depend on $\gamma, \Gamma, \nu, \theta$, $\tau_0$, and on $C$, $c$ and $\beta$.    
    \end{enumerate} 
\end{theorem}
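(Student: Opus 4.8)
The statement is explicitly flagged as "a direct adaptation of \cite[Theorem 5.1]{BachmayrSchneider:17} to our present case of Galerkin discretizations," so the plan is to follow the structure of that proof, substituting $\bA_\Lambda$, $\bbf_\Lambda$, $\bu_\Lambda$, $\cT_\Lambda$ for the operators and solution used there, and verifying that every ingredient of the original argument survives. The first step is to record that $\cT_\Lambda$ is a contraction with rate $\rho$ on $\operatorname{range}(\Restr_\Lambda)$ by \eqref{eq:omega}, that $\ST_{\tau_k}$ is non-expansive (from \cite[Proposition 3.2]{BachmayrSchneider:17}), so that the composed iteration \eqref{eq:soft_iter_subspace} is well-defined and, for any fixed $\tau$, converges geometrically to the modified fixed point $\bu_{\Lambda,\tau}$ satisfying the sandwich bound \eqref{eq:bound_soft_contractive} with $\bu$ replaced by $\bu_\Lambda$. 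Since $\supp \bw_0 = \supp 0 \subseteq \Lambda$ and both $\bA_\Lambda$ and $\ST_{\tau}$ preserve $\operatorname{range}(\Restr_\Lambda)$, all iterates stay in this finite-dimensional subspace, so every operation is honestly finite-dimensional.

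Next I would establish termination: because $\tau_i \to 0$ along the algorithm (one shows the "if"-branch that multiplies $\tau$ by $\theta$ is taken infinitely often, using that on the alternative branch the iterates $\bw_i$ would converge to $\bu_{\Lambda,\tau_i}$ with $\bA_\Lambda \bu_{\Lambda,\tau_i} - \bbf_\Lambda \to 0$ as $\tau_i$ stays bounded away from zero only finitely long), and because $\bu_{\Lambda,\tau} \to \bu_\Lambda$ as $\tau \to 0$ with $\bA_\Lambda \bu_\Lambda = \bbf_\Lambda$, the residual $\norm{\mathbf{s}_i}$ eventually drops below $\varepsilon$. The quantitative parts (i) and (ii) then come from tracking two quantities in tandem over the "epochs" between successive reductions of $\tau$: the residual norm $\norm{\bA_\Lambda \bw_i - \bbf_\Lambda}$ decreases geometrically with ratio controlled by $\rho$ and the stopping test $\norm{\bw_{i+1}-\bw_i}\le \frac{(1-\rho)\nu}{\Gamma\rho}\norm{\mathbf{s}_{i+1}}$, giving the $\varepsilon_i = \tilde\rho^i$ schedule; and the rank $\rank_t(\bw_i)$ is bounded via the explicit formula for soft-thresholded singular values — $\ST_{\tau}$ kills all singular values below $\tau$, so $\rank_t(\bw_i) \le \#\{j : \sigma_{t,j}(\text{input}) > \tau_i\}$, and since the iterates track $\bu_{\Lambda,\tau_i}$ whose matricization singular values are controlled (via the non-expansiveness and \eqref{eq:bound_soft_contractive}) by those of $\bu_\Lambda$, the algebraic decay hypothesis $\sigma_{t,j}(\bu_\Lambda)\le Mj^{-1/p}$ yields $\rank_t(\bw_i)\lesssim (\tau_i/M)^{-p}$, hence $\lesssim M^{1/s}\varepsilon_i^{-1/s}$, and the exponential hypothesis yields $\rank_t(\bw_i)\lesssim (1+|\ln\varepsilon_i|)^{1/\beta}$.

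The place where care is genuinely required — and the one new feature relative to \cite{BachmayrSchneider:17} — is the appearance of the factors $(J+1)$ and $(J+1)^2$. These arise because the hierarchical format here has $E = 2J-1$ effective edges, the thresholding operator \eqref{eq:softtensor} is a composition of $E$ matricization-wise soft thresholdings, and each such composition can amplify the per-edge error by a factor bounded in terms of $E$ (this is the standard $\sqrt{2d-3}$-type, or here $O(J)$-type, bound for hierarchical truncation, cf.\ the analysis underlying \eqref{eq:softtensor} in \cite[Sect. 3]{BachmayrSchneider:17}); the residual and rank estimates then pick up one and two such factors respectively. I would isolate this as a lemma bounding $\norm{\ST_\tau(\bv) - \bv}$ and the ranks of $\ST_\tau(\bv)$ in terms of $\max_t \sigma_t(\bv)$ decay with an $E$-dependent constant, and then feed it into the epoch-by-epoch bookkeeping. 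The remaining steps — verifying the spectral bounds $[\gamma,\Gamma]$ for $\bA_\Lambda$ (already stated in the text preceding the theorem, via ellipticity on $\operatorname{range}(\Restr_\Lambda)$ and \eqref{Riesz_basis}), and checking that passing from $\bu$ to the Galerkin solution $\bu_\Lambda$ changes nothing in the structure of the argument (only the fixed point and its singular values change, and the hypotheses of (i),(ii) are stated directly in terms of $\sigma_{t,j}(\bu_\Lambda)$) — are routine. The main obstacle is thus purely the careful propagation of the $E \eqsim J$-dependent constants through the iteration, ensuring they enter only polynomially as claimed.
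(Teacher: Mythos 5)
The proposal is correct and follows exactly the route the paper indicates: the paper gives no proof of its own for this theorem, stating only that it is ``a direct adaptation of [Theorem 5.1, BachmayrSchneider:17],'' and you carry out precisely that adaptation, with the right substitutions ($\bA_\Lambda$, $\bbf_\Lambda$, $\bu_\Lambda$, $\cT_\Lambda$ restricted to $\operatorname{range}(\Restr_\Lambda)$), the right identification of where the $E \eqsim J$ dependence enters (the composition of $E$ edge-wise soft thresholdings), and the right observation that the hypotheses are already phrased in terms of $\sigma_{t,j}(\bu_\Lambda)$ rather than $\sigma_{t,j}(\bu)$, so nothing structural changes. One small loose spot: in your termination argument the phrase ``$\bA_\Lambda\bu_{\Lambda,\tau_i}-\bbf_\Lambda\to 0$ as $\tau_i$ stays bounded away from zero only finitely long'' compresses the actual contradiction, which is that if $\tau_i$ stayed fixed at some $\tau^*>0$ forever, then $\|\bw_{i+1}-\bw_i\|\to 0$ while $\|\mathbf{s}_{i+1}\|$ stays bounded away from zero (since $\bu_{\Lambda,\tau^*}\neq\bu_\Lambda$), so the test in Algorithm~\ref{alg_GalerkinSolver} would eventually fire and reduce $\tau$ after all.
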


\begin{remark}
As noted in \cite[Remark~5.6]{BachmayrSchneider:17}, for the effective convergence rates $\tilde \rho$ in Theorem \ref{thm:stsolve}, we have $\tilde\rho \leq \hat \rho^{1/\log (J+1)} < 1$ with $\hat\rho$ independent of $J$. As a consequence, the number of iterations required for achieving the residual threshold $\varepsilon >0$ in Theorem \ref{thm:stsolve} scales with respect to $\varepsilon$ and $J$ as $\mathcal{O}((\abs{\log \varepsilon} + \log J) \log J)$.
\end{remark}

\begin{remark}
The Galerkin residuals required in line \ref{alg_GalerkinSolver_residual} of Algorithm \ref{alg_GalerkinSolver} are finitely supported and can thus in principle be evaluated exactly. However, to obtain optimal computational costs, these residuals need to be evaluated inexactly so that $ \|\mathbf{s}_{i} - (\bA_\Lambda \bw_{i} - \bbf_\Lambda)\| \leq \delta_i$ with suitable tolerances $\delta_i > 0$. Algorithm \ref{alg_GalerkinSolver} can be modified \cite[Algorithm 3]{BachmayrSchneider:17} to accommodate such inexact residuals and then leads to the same results as in Theorem \ref{thm:stsolve}; we do not go into details at this point to avoid technicalities. Suitable residual approximations that can also be applied to Galerkin discretizations are discussed in Section \ref{sec:resapprox}.
\end{remark}

\section{Adaptive low-rank Galerkin method and optimality}\label{sec:adaptgalerkin}

\label{Sect:adaptive}

In this section, we first introduce the basic idea of the adaptive low-rank Galerkin method to solve the operator equation \eqref{operator-equation} on $\ell_2(\cG)$, then detail its ingredients and analyze convergency and complexity. 

In what follows, for $\bv \in \ell_2(\cG)$ we continue to use the abbreviation $\norm{ \bv } = \norm{\bv}_{\ell_2}$ and introduce the energy norm $\norm{ \bv }_{\bA} = \sqrt{ \langle \bA \bv, \bv \rangle}$, for which we have the norm equivalence
\begin{equation}
    \label{UEA_operator}
    \gamma \|\bu\|^2 \leq \|\bu\|^2_\bA \leq \Gamma \|\bu\|^2, \quad \forall \bu \in \ell_2(\cG),
\end{equation}
where $\gamma = c_{\Psi}^2 r,\, \Gamma = C_{\Psi}^2 (2\|\bar{a}\|_{L_{\infty}} - r)$, with $r$ as in \eqref{UEA} and $c_\Psi, C_\Psi$ as in \eqref{Riesz_basis}.

\subsection{Adaptive Galerkin method}
The idealized adaptive Galerkin scheme is constructed by solving successive Galerkin problems on a sequence of nested finite index sets $\Lambda^k \subset \cG$, $k\geq 0$. Starting from some $\Lambda^0$ that may be empty, these are generated by solving for each $k$ the Galerkin problem on $\Lambda^k$ to obtain $\bu_{k} \in \ell_2(\cG)$ with $\supp\bu_k \subseteq \Lambda^k$ satisfying $ \bA_{\Lambda^{k}} \bu_k = \bbf_{\Lambda^{k}} $. Then $\Lambda^{k+1} \subset \cG$ with $\Lambda^{k} \subset \Lambda^{k+1}$ is chosen to satisfy $\norm{\Restr_{\Lambda^{k+1}} (\bA \bu_k - \bbf)} \geq \mu \norm{ \bA \bu_k - \bbf }$, where $0 < \mu \leq 1$ is fixed and sufficiently small.
Convergence of this idealized scheme is ensured by the following well-known result, see \cite[Lemma 4.1]{CohenDahmenDeVore:01}.

\begin{lemma}
    Let $\mu \in (0,1]$, $\bw \in \ell_2(\cG)$ with $\supp \bw \subseteq \Lambda$, and $\Lambda \subset \tilde{\Lambda} \subset \cG$ such that
    \begin{equation}
        \norm{ \Restr_{\tilde \Lambda} (\bA \bw - \bbf) }  \geq \mu \norm{ \bA \bw - \bbf } .
    \end{equation}
    Let $\bu_{\tilde{\Lambda}}$ be the solution to the Galerkin problem $\bA_{\tilde\Lambda} \bu_{\tilde{\Lambda}} = \bbf_{\tilde \Lambda}$, then
    \begin{equation}
        \| \bu - \bu_{\tilde{\Lambda}} \|_\bA \leq (1-\mu^2\kappa(\bA)^{-1})^{\frac{1}{2}} \| \bu - \bw \|_\bA.
    \end{equation}
\end{lemma}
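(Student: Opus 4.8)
The plan is to reduce the asserted contraction to a single lower bound on the energy norm of the \emph{Galerkin update} $\bu_{\tilde\Lambda} - \bw$. First I would introduce the full residual $\br = \bbf - \bA\bw$ and record that, since $\bbf = \bA\bu$, we have $\br = \bA(\bu-\bw)$; applying \eqref{UEA_operator} to $\bu-\bw$ then gives $\norm{\br}^2 \geq \gamma\,\norm{\bu-\bw}_\bA^2$. Next I would set up Galerkin orthogonality: because $\supp\bw \subseteq \Lambda \subseteq \tilde\Lambda$ one has $\Restr_{\tilde\Lambda}\bw = \bw$, and $\bu_{\tilde\Lambda}$ solves $\Restr_{\tilde\Lambda}\bA\bu_{\tilde\Lambda} = \Restr_{\tilde\Lambda}\bbf = \Restr_{\tilde\Lambda}\bA\bu$, so $\bA(\bu - \bu_{\tilde\Lambda})$ vanishes on $\tilde\Lambda$ and hence $\langle \bA(\bu-\bu_{\tilde\Lambda}),\bv\rangle = 0$ for every $\bv$ with $\supp\bv \subseteq \tilde\Lambda$. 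Taking $\bv = \bu_{\tilde\Lambda} - \bw$ yields the Pythagorean identity $\norm{\bu-\bw}_\bA^2 = \norm{\bu-\bu_{\tilde\Lambda}}_\bA^2 + \norm{\bu_{\tilde\Lambda}-\bw}_\bA^2$, so it suffices to show $\norm{\bu_{\tilde\Lambda}-\bw}_\bA^2 \geq \mu^2 \kappa(\bA)^{-1}\norm{\bu-\bw}_\bA^2$.

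For this lower bound I would exploit that $\bu_{\tilde\Lambda}$ is the energy-norm best approximation of $\bu$ from $\{\bv:\supp\bv\subseteq\tilde\Lambda\}$, so that $\norm{\bu-\bu_{\tilde\Lambda}}_\bA^2 \leq \norm{\bu-\bw-t\,\Restr_{\tilde\Lambda}\br}_\bA^2$ for all $t\in\R$, using the admissible trial vector $\bw + t\,\Restr_{\tilde\Lambda}\br$ (supported in $\tilde\Lambda$). Expanding and using $\langle\bA(\bu-\bw),\Restr_{\tilde\Lambda}\br\rangle = \langle\br,\Restr_{\tilde\Lambda}\br\rangle = \norm{\Restr_{\tilde\Lambda}\br}^2$ (as $\Restr_{\tilde\Lambda}$ is an orthogonal projection) together with $\norm{\Restr_{\tilde\Lambda}\br}_\bA^2 \leq \Gamma\,\norm{\Restr_{\tilde\Lambda}\br}^2$, the choice $t = 1/\Gamma$ gives $\norm{\bu-\bu_{\tilde\Lambda}}_\bA^2 \leq \norm{\bu-\bw}_\bA^2 - \Gamma^{-1}\norm{\Restr_{\tilde\Lambda}\br}^2$, hence by the Pythagorean identity $\norm{\bu_{\tilde\Lambda}-\bw}_\bA^2 \geq \Gamma^{-1}\norm{\Restr_{\tilde\Lambda}\br}^2$. (Alternatively one can note $\bA_{\tilde\Lambda}(\bu_{\tilde\Lambda}-\bw) = \Restr_{\tilde\Lambda}\br$ and use the spectral bound $\bA_{\tilde\Lambda}^{-1}\succeq\Gamma^{-1}$ on $\operatorname{range}(\Restr_{\tilde\Lambda})$.) Now the bulk-marking hypothesis $\norm{\Restr_{\tilde\Lambda}\br}\geq\mu\norm{\br}$ and the bound $\norm{\br}^2\geq\gamma\,\norm{\bu-\bw}_\bA^2$ from the first step combine to $\norm{\bu_{\tilde\Lambda}-\bw}_\bA^2 \geq \mu^2\gamma\Gamma^{-1}\norm{\bu-\bw}_\bA^2 = \mu^2\kappa(\bA)^{-1}\norm{\bu-\bw}_\bA^2$; inserting this into the Pythagorean identity and taking square roots gives the claim, noting $\mu^2\kappa(\bA)^{-1}\leq 1$ since $\mu\leq 1$ and $\gamma\leq\Gamma$.

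The argument is essentially routine, as it is quoted from \cite[Lemma 4.1]{CohenDahmenDeVore:01}. The only points demanding a little care are the bookkeeping with the restriction operators — in particular $\Restr_{\tilde\Lambda}\bw = \bw$ and $\langle\br,\Restr_{\tilde\Lambda}\br\rangle = \norm{\Restr_{\tilde\Lambda}\br}^2$, both immediate from $\supp\bw\subseteq\tilde\Lambda$ and idempotency/self-adjointness of $\Restr_{\tilde\Lambda}$ — and tracking the constants so that the final factor is exactly $\kappa(\bA)^{-1} = \gamma/\Gamma$ rather than a larger power of the condition number; this is precisely why one optimizes the step length ($t = 1/\Gamma$) instead of applying a cruder Cauchy–Schwarz estimate directly to $\norm{\bu_{\tilde\Lambda}-\bw}_\bA$. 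Accordingly I would keep the write-up brief.
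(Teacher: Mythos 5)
Your proof is correct, and the paper itself offers no proof of this lemma (it simply cites \cite[Lemma~4.1]{CohenDahmenDeVore:01}); your argument is exactly the standard one behind that reference: reduce to a lower bound on $\norm{\bu_{\tilde\Lambda}-\bw}_\bA$ via Galerkin orthogonality, use the admissible competitor $\bw+t\Restr_{\tilde\Lambda}\br$ with the optimal step $t$, then feed in the bulk-chasing hypothesis and the residual/energy equivalence. One small point worth tightening: you apply \eqref{UEA_operator} with $\gamma,\Gamma$ and then write $\kappa(\bA)^{-1}=\gamma/\Gamma$, but \eqref{UEA_operator} only guarantees $\norm{\bA}\leq\Gamma$ and $\norm{\bA^{-1}}\leq\gamma^{-1}$, so $\gamma/\Gamma\leq\kappa(\bA)^{-1}$; your bound as written is therefore the (possibly weaker) $1-\mu^2\gamma/\Gamma$. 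To obtain the claim with $\kappa(\bA)^{-1}$ exactly, run the identical argument with the sharp spectral extremes $\norm{\bA^{-1}}^{-1}$ (in the residual-to-energy estimate) and $\norm{\bA}$ (when bounding $\norm{\Restr_{\tilde\Lambda}\br}_\bA^2$ and choosing $t=1/\norm{\bA}$); nothing else changes.
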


From a practical point of view, required Galerkin approximations $\bu_k$ in the first step can only be computed up to certain error tolerances that need to be accounted for. Furthermore, a main difficulty is that residuals $\bA \bu_k - \bbf$ generally have infinite support. Therefore, in each step this residual needs to be replaced by a finitely supported approximation $\br_k$ of $\bA \bu_k - \bbf$.

Taking these additional aspects into account, in \cite{GantumurHarbrechtStevenson:07} this basic adaptive scheme is analysed in the context of direct sparse approximations of $\bu$, where arbitrary subsets of $\cG$ can be selected. 
In this case, the performance of the method is compared to best $N$-term approximations of $\bu$, which is obtained by selecting $N$ indices corresponding to entries of largest absolute value of $\bu$. Their convergence can be quantified in terms of the approximation classes
\begin{equation}
    \label{eq:approximation_space}
   \cA^s(\cG)  = \bigl\{  \bv \in \ell_2(\cG) \colon \norm{\bv}_{\cA^s(\cG)} := \sup_{N \in \mathbb{N}_0}(N+1)^s \inf_{\#\supp \bv_N \leq N}\| \bv- \bv_N\| < \infty \bigr\}
\end{equation}
for $s>0$, so that for all $\bv \in \cA^s(\cG)$ and $\varepsilon > 0$, there exists $\tilde\bv$ with $\| \bv- \tilde\bv\| \leq \varepsilon$ and
\[
   \# \supp \tilde\bv  \leq  \varepsilon^{-\frac1s} \norm{\bv}_{\cA^s(\cG)}^{\frac1s} \,.
\]

With appropriately adjusted error tolerances, and provided that $\mu$ is chosen sufficiently small and $\Lambda^{k+1}\supseteq \Lambda^k$ is chosen as the smallest index set such that $\norm{ \Restr_{\Lambda^{k+1}} (\bA \bu_k - \bbf)} \geq \mu \norm{ \bA \bu_k - \bbf }$, the method in \cite{GantumurHarbrechtStevenson:07} is shown to produce quasi-optimal selections of $\Lambda^k$ in relation to the achieved error proportional to $\norm{ \bA \bu_k - \bbf }$, that is, for each given tolerance $\varepsilon>0$, the method produces a $\bu_k$ supported on $\Lambda^k$ such that 
\[
 \norm{\bu_k - \bu} \eqsim \norm{\bA \bu_k - \bbf} \lesssim \varepsilon , \qquad \#\Lambda^k \lesssim \varepsilon^{-\frac1{s} } \norm{\bu}_{\cA^s}^{\frac1{s}} \,.
\]

However, for low-rank approximation, we need to construct $\Lambda^{k}$ of Cartesian product form $\Lambda^k = \Lambda^k_0 \times \cdots\times\Lambda^k_J$ as in \eqref{eq:cartesianrestr}. When all operations are performed in low-rank format, both the storage required for approximations and the complexity of the entire scheme depend not on the product, but on the \emph{sum} of mode sizes $\sum_{j=0}^{J}\#\Lambda^{k}_j$, and we accordingly aim to obtain quasi-optimal bounds on these quantities.
While this is achieved by the adaptive low-rank method developed in \cite{BachmayrDahmen:14}, there this requires an intermittent coarsening of discretizations that removes extraneous degrees of freedom. 
As a main contribution in this paper, we exploit the particular structure of our novel approximation format for parametric problems to achieve quasi-optimality of discretizations without coarsening.

In addition to the restriction to product index sets, identifying them by direct comparison of absolute values of entries would be too inefficient, and following \cite{BachmayrDahmen:14}, we thus instead identify product index sets from lower-dimensional quantities termed \emph{contractions}.
We next recall the relevant definitions from \cite[Sec.~3.2.1]{BachmayrDahmen:14} and introduce the required notation. 

For $j = 1,\ldots,J$, let $\nu = (\nu_1,\ldots,\nu_J) \in \cG_1 \times \cdots \times \cG_J$ and $\Lambda_1 \times \cdots \times \Lambda_J \subset \cG_1 \times \cdots \times \cG_J$, we define
\begin{gather}
    \label{eq:check_nu}
    \check{\nu}_j = (\nu_1,\dots,\nu_{j-1},\nu_{j+1},\dots,\nu_J), \\
    \label{eq:check_Lambda}
    \check{\Lambda}_j = \Lambda_1 \times \cdots \times \Lambda_{j-1} \times \Lambda_{j+1} \times \cdots \times \Lambda_J,
\end{gather}
to refer to the corresponding multi-index and product set with component $i$ deleted, respectively. We emphasize that index \eqref{eq:check_nu} and set \eqref{eq:check_Lambda} correspond only to the parametric tensor modes. Since the index $(\bar{\nu},\lambda)$ and set $\Lambda_0$ corresponding to the sparse expansion mode differ from those of the parametric modes, we write them separately. For $\bv \in \ell_2(\cG)$, contractions are defined as 
\begin{equation}
    \label{eq:contraction}
    \begin{aligned}
        & \pi^{(0)}(\bv) = \left(\pi^{(0)}_{(\bar\nu,\lambda)}(\bv)\right)_{(\bar\nu,\lambda) \in \cG_0} = \Biggl( \biggl(\sum_{\nu  \in \cG_1 \times \cdots\times \cG_J} \abs{\bu_{(\bar\nu,\lambda),\nu}}^2 \biggr)^{\frac{1}{2}} \Biggr)_{(\bar\nu,\lambda) \in \cG_0}, \\
        & \pi^{(j)}(\bv) = \left(\pi^{(j)}_{\nu_j}(\bv)\right)_{\nu_j \in \cG_j} = \Biggl( \biggl(\sum_{(\bar\nu,\lambda) \in \cG_0}\sum_{\check{\nu}_j \in \check{\cG}_j}\abs{\bu_{(\bar\nu,\lambda),\nu}}^2 \biggr)^{\frac{1}{2}} \Biggr)_{\nu_j \in \cG_j}, \quad j \in \{1,\dots,J\}.
    \end{aligned}
\end{equation}
These quantities can be computed efficiently from singular value decompositions of matricizations of $\bv$ at costs proportional to $\#\Lambda_j \rank\cM_{\{j\}}(\bv)$, see \cite[Prop.~2]{BachmayrDahmen:14}.

For $\bv \in \ell_2(\cG)$ such that $\pi^{(j)}(\bv) \in \cA^{s_j}(\cG_j)$ with $s_j >0$ for $j= 0,\ldots,J$, for $\varepsilon>0$ we thus aim to identify product index sets $\Lambda = \Lambda_0 \times \cdots \times \Lambda_J$ such that $\norm{\bv - \bv_{\Lambda_\varepsilon}} \leq \varepsilon$ that satisfy the quasi-optimal bound
\[
    \sum_{j=0}^{J} \#\Lambda_{\varepsilon,j} \lesssim \sum_{j=0}^{J}  \bignorm{\pi^{(j)}(\bv)}_{\cA^{s_j}}^{\frac{1}{s_j}} \varepsilon^{-\frac{1}{s_j}} \,.
\]

An adaptive low-rank Galerkin method that does not use coarsening of discretizations is given in Algorithm \ref{alg_adaptive}. It comprises three fundamental components specifically designed for the hierarchical tensor format and adapted to the present class of parametric problems:
\begin{enumerate}[{\rm(I)}]
    \item \emph{Galerkin solver:} Provide a Galerkin approximation $\bu_k \in \ell_2(\cG)$ supported on $\Lambda^k$ such that for the given error tolerance $\eta$, 
        \begin{equation}
            \label{eq:Galerkin_error_estimate}
            \| \bA_{\Lambda^k} \bu_{k} - \bbf_{\Lambda^k} \| \leq \eta.
        \end{equation}
    \item \emph{Residual approximation:} Return a finitely supported approximation $\br_{k}$ of $\bA  \bu_{k} - \bbf$ with a given tolerance $\xi$ and $\tilde{\Lambda}^{k}$, such that $\supp \br_k \subset \tilde{\Lambda}^{k}$ and
        \begin{equation}
            \label{eq:resi_appro_error_estimate}
            \| \br_{k} - (\bA \bu_{k} - \bbf) \| \leq \xi .
        \end{equation}
    \item \emph{Discretization refinement in tensor modes:} For a given parameter $\alpha \in (0,1)$, return a product index set $\Lambda^{k+1}$ with $\Lambda^{k} \subset \Lambda^{k+1} \subset \operatorname{supp}(\br_{k})$, such that
        \begin{gather}
        	\label{update_rule}
            \norm{  \Restr_{\Lambda^{k+1}} \br_{k} } \geq \alpha \norm{ \br_{k}  } , \\
            \label{update_rule_optimality}
            \sum_{j=0}^{J}\#(\Lambda^{k+1}_j \backslash \Lambda^{k}_j) \lesssim \sum_{j=0}^{J} \bignorm{\bA \bu_k - \bbf}^{-\frac{1}{s_j}} \bignorm{\pi^{(j)}(\bu)}_{\cA^{s_j}}^{\frac{1}{s_j}} .
        \end{gather}
\end{enumerate}

To achieve \eqref{eq:Galerkin_error_estimate}, we use the Galerkin solver based on the Richardson iterative method with a soft thresholding operator for rank reduction in hierarchical tensors described in Algorithm \ref{alg_GalerkinSolver}. 
We provide a detailed analysis of the further components in the following subsections.

\begin{algorithm}
    \caption{Adaptive low-rank approximation method}
    \label{alg_adaptive}
    \begin{flushleft}
        parameters: $\delta \in (0,\alpha)$, $\mu= \frac{\alpha+\delta}{1-\delta} < \kappa(\bA)^{-\frac{1}{2}}$ and 
        $0<\eta < \frac{(1-\delta)(\alpha-\delta)}{1+\delta}\kappa(\bA)^{-1}$.\\
        output: $\bu_{\varepsilon}$ satisfying $\|\bA \bu_{\varepsilon} - \bbf\| \leq \varepsilon$
    \end{flushleft}
    \begin{algorithmic}[1]
    \State $k:=0$, $\Lambda^0 := \emptyset$, $\bu_0 := 0$, $\varepsilon_{-1} := \|\bbf\|$
    \State $\xi:= \theta \varepsilon_{-1}$
    \While {$\xi > \delta \|\br_0\|$}
        \State $\xi := \xi/2$
        \State $\br_0, \tilde{\Lambda}^0 := \ResApprox(\bu_0,\bA,\bbf,\Lambda^0, \xi)$
    \EndWhile
    \While {$\varepsilon_k := \| \br_{k} \| + \xi > \varepsilon$}
        \State $\Lambda^{k+1} := \Expand(\br_k,\Lambda^k,\tilde{\Lambda}^{k},\alpha)$ \label{alg:Adap_expand}
        \State $\bu_{k+1} := \STSolve(\bA_{\Lambda^{k+1}}, \bbf_{\Lambda^{k+1}}, \eta \|\br_k\|)$ \label{alg:Adap_solve_relative_error}
        \State $\xi := \theta \varepsilon_k$
        \While {$\xi > \delta \|\br_{k+1}\|$} \label{alg:Adap_resi_relative_error}
            \State $\xi := \xi/2$
            \State $\br_{k+1}, \tilde{\Lambda}^{k+1} := \ResApprox(\bu_{k+1},\bA,\bbf,\Lambda^{k+1}, \xi)$
        \EndWhile
        \State $k \leftarrow k+1$
    \EndWhile
    \State $\bu_{\varepsilon} := \bu_k$
    \end{algorithmic}
\end{algorithm}

\subsection{Residual approximation}\label{sec:resapprox}

Let $\bw_{\Lambda}$ with $\supp \bw_\Lambda \subseteq \Lambda$ be an approximate solution to $\bA_\Lambda \bu_\Lambda =\bbf_\Lambda$ with Galerkin error $\norm{\bA_\Lambda \bw_\Lambda - \bbf_\Lambda} \leq \eta$. In order to find a finitely supported approximation to $\bA\bw_{\Lambda}-\bbf$, we determine an extended product index set $\tilde{\Lambda} \supset \Lambda$ based on a fixed residual tolerance $\xi$ and construct an approximation $\br_\Lambda$ of $\bA_{\tilde{\Lambda}}\bw_\Lambda - \bbf_{\tilde{\Lambda}}$ such that $\supp \br_\Lambda \subset \tilde{\Lambda}$ and $\norm{ \br_{\Lambda} - (\bA \bw_{\Lambda} - \bbf) } \leq \xi$. This procedure is referred to as
$\ResApprox(\bw_{\Lambda}, \bA, \bbf, \Lambda, \xi)$. 

By iteratively halving the value of $\xi$ in $\ResApprox(\bw_{\Lambda}, \bA, \bbf, \Lambda, \xi)$ until $\xi \leq \delta \norm{\br_{\Lambda}}$ for a given $\delta$, we obtain the desired approximation $\br_{\Lambda}$ which satisfies \eqref{eq:resi_appro_error_estimate} and thus also the relative error bound
\[  \norm{\br_\Lambda - (\bA\bw_{\Lambda}-\bbf)} \leq \delta \norm{\br_\Lambda}  \]
as required in Algorithm \ref{alg_adaptive}.

We recall the hierarchical tensor decomposition of $\bA$ as in \eqref{operator-TT-form} and \eqref{def_A_tensor} in terms of the components
\[
    \bA = \bar{\bA} \otimes \bI_1 \otimes \cdots \otimes \bI_J + \bB_1 \otimes \bN_1 \otimes \bI_2 \otimes \cdots \otimes \bI_J + \cdots + \bB_J \otimes \bI_{1} \otimes \cdots \bI_{J-1} \otimes \bN_J,
\]
with the components associated to the spatial-parametric zeroth mode given by
\[  
\bar{\bA} = \bM_0 \otimes \bA_0 + \sum_{i = J+1}^\infty (\bM_i \otimes \bA_i), \qquad \bB_{j} = \bM_0 \otimes \bA_j, \quad j=1,\dots,J.
\]
The set product set $\tilde{\Lambda}$ is built by identifying its factors $\tilde{\Lambda}_j \subset \cG_j$, $j=0,\ldots,J$, using the contractions $\pi^{(j)}(\bw_\Lambda)$, $j = 0, \ldots,J$ for controlling error tolerances, as described in \cite[Sec.~4.1]{BachmayrDahmen:14}.

We first consider the index sets $\tilde{\Lambda}_j$ for the parametric modes $j=1,\ldots,J$. Due to the three-term recursion for the orthonormal Legendre polynomials \eqref{eq:threeterm},
the matrix $\bN_j$ is bidiagonal for $j = 1,\dots,J$. Specifically, the entry of $\br_{\Lambda}$ with index $\big((\bar{\nu}',\lambda'),\nu'_1,\ldots,\nu'_J\big)$ can be non-zero only when there exists a $j \in \{1,\ldots,J\}$ and a $\nu_j \in \Lambda_j$ such that $\nu^{\prime}_j = \nu_j \pm 1$. Consequently, this allows for the simple construction of $\tilde{\Lambda}_j$ as 
\begin{equation}
    \label{tilde_Lambda_j}
    \tilde{\Lambda}_j = \{\nu_j \pm 1 : \nu_j \in \Lambda_j\}\cap \N_0, \quad j = 1,\dots,J.
\end{equation}

The main difficulty lies in the construction of $\tilde{\Lambda}_0$ corresponding to the sparse expansion mode, which involves enriching both spatial discretizations and the product Legendre polynomial expansions. 
In view of \cite[Sec.~4.1]{BachmayrDahmen:14}, this requires approximating the action of $\bar\bA$ and $\bB_j$, $j=1,\ldots, J$, on the contraction $\pi^{(0)}(\bw_\Lambda)$.
There exist different methods for performing this approximation and identifying $\tilde\Lambda_0$ that are applicable in particular under different assumptions on the functions $\theta_j$, $j \in \cI$, in the expansion \eqref{eq:affine}. The choice of the method for producing $\#\tilde\Lambda_0$ is not essential for the developments in the present work, but has an impact on the total computational costs of $\ResApprox$. We will investigate this point in more detail in future work, and here we thus only outline several available constructions.

A standard framework for sparse approximation of residuals is based on \emph{$s^*$-compressibility} of operators \cite{CohenDahmenDeVore:01}. The compressibility of $\bB_j$ then follows directly from that of the spatial component $\bA_j$ for $j=1,\ldots,J$, which is provided by standard results; see, for example, \cite{CohenDahmenDeVore:01,Stevenson:02}. For $\{ \theta_j \}$ with global supports on $D$ (as, for example, in Karhunen-Lo\`eve expansions of random fields), compressibility of $\bar\bA$ can be deduced from the results in \cite{Gittelson:13,Gittelson:14}. In general, however, in this setting one does not obtain optimal computational costs, except when the order is limited by the spatial discretization.

Better compressibility of $\bar\bA$ and accordingly improved computational costs can be obtained for $\theta_j$ with multilevel structure as in Assumption \ref{multilevel_structure}. The following observation from \cite{BachmayrCohenDahmen:18} is the basis for the improved approximations of $\bar\bA$.

\begin{prop}
    \label{Prop:semi_residual_appro}
    Let Assumption \ref{multilevel_structure} hold. Then for $\ell \in \N$,
    \begin{equation}
        \label{eq:semi_residual_appro}    
         \biggnorm{ \bar{\bA} - \sum_{\substack{(\ell,k) \in \bigcup_{n < L} \{ n\} \times \mathcal I_n \\ j(\ell,k) > J}} \mathbf{M}_{j(\ell,k)} \otimes \mathbf{A}_{ j(\ell,k) } } \lesssim  2^{- \alpha L }
     \end{equation}
    with constant depending on those in Assumption \ref{multilevel_structure} and on $c_{\Psi}$, $C_{\Psi}$ in \eqref{Riesz_basis}.
\end{prop}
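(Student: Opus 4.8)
The plan is to expand $\bar\bA = \bM_0 \otimes \bA_0 + \sum_{i>J}(\bM_i \otimes \bA_i)$ according to the multilevel enumeration of Assumption~\ref{multilevel_structure} and bound the tail that is discarded. Recall that the enumeration $j = j(\ell,k)$ orders the $\theta_j$ by increasing level $\ell$, so that the indices with $j(\ell,k) > J$ and $\ell \ge L$ are exactly those contributing to the difference on the left-hand side of \eqref{eq:semi_residual_appro}. Thus
\[
  \bar{\bA} - \sum_{\substack{(\ell,k) : \ell < L,\; j(\ell,k) > J}} \mathbf{M}_{j(\ell,k)} \otimes \mathbf{A}_{j(\ell,k)}
  = \sum_{\substack{(\ell,k) : \ell \ge L,\; j(\ell,k) > J}} \mathbf{M}_{j(\ell,k)} \otimes \mathbf{A}_{j(\ell,k)},
\]
where, for $L$ large enough that all $j \le J$ have level below $L$, the condition $j(\ell,k) > J$ becomes automatic; I would note this and simply sum over all $(\ell,k)$ with $\ell \ge L$ (the finitely many extra terms with $\ell < L$ but $j \le J$ are already removed, and for $\ell \ge L$ the constraint $j > J$ is vacuous). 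The triangle inequality then gives the bound $\sum_{\ell \ge L} \sum_{k \in \mathcal I_\ell} \norm{\bM_{j(\ell,k)} \otimes \bA_{j(\ell,k)}}$.

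Next I would estimate each summand. For the parametric factor, the three-term recursion \eqref{eq:threeterm} shows $\bM_i$ is bidiagonal with entries $\sqrt{\beta_{\cdot}} \le 1/2$, hence $\norm{\bM_i} \le 1$ uniformly in $i$ (in fact $\norm{\bM_i} \le 1$ since it is the multiplication operator by $y_i \in [-1,1]$ acting on $L_2$). For the spatial factor, from the definition \eqref{def_Ai} and the Riesz basis property \eqref{Riesz_basis} one has $\norm{\bA_j} \lesssim \norm{\theta_j}_{L_\infty}$ with constant proportional to $C_\Psi^2/c_\Psi^2$: indeed $\langle \bA_j \bv, \bw\rangle = \int_D \theta_j \nabla(\sum \bv_\lambda \psi_\lambda)\cdot\nabla(\sum \bw_\lambda \psi_\lambda)\dx$ is bounded by $\norm{\theta_j}_{L_\infty}$ times the $V$-norms of the two wavelet expansions, which in turn are controlled by $C_\Psi \norm{\bv}_{\ell_2}$ and $C_\Psi\norm{\bw}_{\ell_2}$. (One should divide by the normalization implicit in making $\Psi$ a Riesz basis of $V$; the constant $c_\Psi$ enters if $\Psi$ is taken $H^1$-normalized.) Therefore $\norm{\bM_{j(\ell,k)} \otimes \bA_{j(\ell,k)}} \lesssim \norm{\theta_{\ell,k}}_{L_\infty} \lesssim 2^{-\alpha \ell}$ by Assumption~\ref{multilevel_structure}(iii).

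Finally I would combine these with the cardinality bound $\#\mathcal I_\ell \lesssim 2^{d\ell}$ from Assumption~\ref{multilevel_structure}(ii) and sum the geometric-type series. This requires the summand $2^{d\ell}\cdot 2^{-\alpha\ell}$ to be summable, i.e.\ that the series $\sum_{\ell\ge L} 2^{(d-\alpha)\ell}$ converges — which needs $\alpha > d$. The hard part of this proof is precisely this point: as stated, Assumption~\ref{multilevel_structure} allows any $\alpha > 0$, and for $\alpha \le d$ the naive triangle-inequality bound diverges. To handle the general case one must exploit the \emph{disjointness} of the supports of the $\theta_{\ell,k}$ at a fixed level $\ell$: the operators $\bA_{j(\ell,k)}$ for $k \in \mathcal I_\ell$ act on (essentially) non-overlapping sets of wavelet indices, so $\bignorm{\sum_{k\in\mathcal I_\ell} \bM_{j(\ell,k)}\otimes \bA_{j(\ell,k)}} \lesssim \max_k \norm{\bM_{j(\ell,k)}\otimes\bA_{j(\ell,k)}} \lesssim 2^{-\alpha\ell}$ (up to a bounded-overlap constant depending on the wavelet locality), rather than $\#\mathcal I_\ell$ times that. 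With this level-wise bound in hand, the outer sum $\sum_{\ell \ge L} 2^{-\alpha\ell} \lesssim 2^{-\alpha L}$ converges for every $\alpha > 0$, giving \eqref{eq:semi_residual_appro}; I would cite the analogous argument in \cite{BachmayrCohenDahmen:18} for the details of the bounded-overlap estimate. The tracking of the constant through \eqref{Riesz_basis} and Assumption~\ref{multilevel_structure} is then routine.
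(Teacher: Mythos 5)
The paper does not actually prove Proposition~\ref{Prop:semi_residual_appro}; it is stated as an observation taken from \cite{BachmayrCohenDahmen:18}. Your argument is the one that underlies the cited result, and you correctly identify the crucial point: a plain triangle inequality gives $\sum_{\ell\geq L}(\#\mathcal I_\ell)\,2^{-\alpha\ell}\lesssim\sum_{\ell\geq L}2^{(d-\alpha)\ell}$, which diverges for $\alpha\leq d$, so one must exploit the bounded overlap of the supports at each level to obtain a \emph{levelwise} bound $\lesssim 2^{-\alpha\ell}$ that then sums to $\lesssim 2^{-\alpha L}$ for every $\alpha>0$.

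One refinement to your reasoning: the phrase that the $\bA_{j(\ell,k)}$ ``act on non-overlapping sets of wavelet indices'' is not literally accurate, since coarse-scale wavelets overlap many of the fine-scale $\theta_{\ell,k}$. The cleaner route is to pass to the bilinear form: $\sum_{k\in\mathcal I_\ell}\bM_{j(\ell,k)}\otimes\bA_{j(\ell,k)}$ is the Galerkin matrix of $(u,v)\mapsto\int_Y\int_D\bigl(\sum_{k}y_{j(\ell,k)}\theta_{\ell,k}\bigr)\nabla u\cdot\nabla v$ on the $\{\psi_\lambda\otimes L_\nu\}$ basis, whose continuity constant with respect to $\norm{\cdot}_{L_2(Y,V)}$ is $\bignorm{\sum_k\abs{\theta_{\ell,k}}}_{L_\infty(D)}$; the finite support overlap implied by Assumption~\ref{multilevel_structure}(i)--(ii) gives $\bignorm{\sum_k\abs{\theta_{\ell,k}}}_{L_\infty(D)}\lesssim\max_k\norm{\theta_{\ell,k}}_{L_\infty}\lesssim 2^{-\alpha\ell}$, and the Riesz bound \eqref{Riesz_basis} converts this to an $\ell_2$ operator-norm estimate with the extra factor $C_\Psi^2$. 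This makes both the overlap constant and the stated dependence on $C_\Psi$ transparent, whereas the ``disjoint wavelet indices'' heuristic does not. A last minor point: as literally written, the difference in \eqref{eq:semi_residual_appro} still contains $\bM_0\otimes\bA_0$, which does not decay with $L$; you read it, correctly, as controlling only the tail $\sum_{\ell\geq L}\sum_{k\in\mathcal I_\ell,\ j(\ell,k)>J}\bM_{j(\ell,k)}\otimes\bA_{j(\ell,k)}$, which is the intended meaning.
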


This leads to approximations of $\bar\bA$ by levelwise truncation of the sum in \eqref{eq:affine}, which in  \cite{BachmayrCohenDahmen:18} is combined with standard compression for the operators $\bA_0$ and $\bA_j$, $j>J$, to show $s^*$-compressibility of $\bar\bA$. However, this does not lead to optimal computational costs and requires strong smoothness requirements on the functions $\theta_j$ and the spatial wavelet basis.

An alternative approach based on Assumption \ref{multilevel_structure} is developed in \cite{BachmayrVoulis:22}, based on the observation that a semi-discrete approximation in the parametric variables based on Proposition \ref{Prop:semi_residual_appro} can be combined with a fixed level of local spatial refinement in the spatial wavelet discretizations to achieve any desired relative error. This scheme leads to a method of optimal computational complexity. In \cite{BachmayrVoulis:22}, the additional assumption that the wavelet basis functions are in $H^2(D)$ is used; more recent results in \cite{BEEV:24}, however, show that this requirement can be removed.

For our numerical tests, we use a simplified version of the method described in \cite[Sect. 4]{BachmayrVoulis:22} to generate $\tilde\Lambda_0$ and then directly approximate $\mathbf{A}_{\tilde\Lambda} \mathbf{w}_\Lambda - \bbf_{\tilde \Lambda}$ based on Proposition \ref{Prop:semi_residual_appro} by directly applying the spatial discretization matrices. While this leads to the same $\tilde\Lambda$ as the method from \cite{BachmayrVoulis:22}, for optimal computational costs the latter additionally requires tree-based refinement of wavelet discretizations and that all operations are carried out hierarchically using this tree structure. Since the incorporation of these further techniques into the present adaptive low-rank method are not essential for what follows, we address this point in a forthcoming work.

\subsection{Discretization refinement}
Let $\bu \in \ell_2(\cG)$ be the solution to \eqref{operator-equation} and let $\pi^{(0)}(\bu) \in \cA^{s_1}$, $\pi^{(j)}(\bu) \in \cA^{s_2}$ for $j=1,\dots,J$, which quantify the approximabilities of sparse expansion mode frame and parametric mode frames, respectively. Let $\bw_{\Lambda}$ with $\supp \bw_\Lambda \subseteq \Lambda$ be an approximate Galerkin solution $\norm{\bA_\Lambda \bw_\Lambda - \bbf_\Lambda} \leq \eta$. Moreover, let $\br_\Lambda \in \ell_2(\cG)$ with finite support $\tilde{\Lambda}$ satisfy $\norm{\br_\Lambda - (\bA\bw_{\Lambda}-\bbf)} \leq \delta \norm{\br_\Lambda}$, where $0<\delta<\alpha$ and $\alpha > 0$ as in \eqref{update_rule_sect4}. We aim to construct a quasi-optimal product set $\bar{\Lambda}$ with $ \Lambda \subset \bar{\Lambda} \subset \tilde{\Lambda}$, such that
\begin{equation}
    \label{update_rule_sect4}
    \norm{\Restr_{\bar{\Lambda}}\br_{\Lambda} } \geq \alpha\norm{\br_{\Lambda}},
\end{equation}
and simultaneously bound the total number of added indices in all tensor modes, that is, we aim to show an estimate of the form
\begin{equation}
    \label{update_rule_optimality_sect4}    
    \sum_{j=0}^{J}\#(\bar\Lambda_j \backslash \Lambda_j) \lesssim \bignorm{\bA \bw_\Lambda - \bbf}^{-\frac{1}{s_1}} \, \bignorm{\pi^{(0)}(\bu)}_{\cA^{s_1}}^{\frac{1}{s_1}} + \sum_{j=1}^{J}  \bignorm{\bA \bw_\Lambda - \bbf}^{-\frac{1}{s_2}} \, \bignorm{\pi^{(j)}(\bu)}_{\cA^{s_2}}^{\frac{1}{s_2}}.
\end{equation}

In the framework of sparse approximations without tensor product structure as in \cite{CohenDahmenDeVore:01, GantumurHarbrechtStevenson:07}, the natural approach to refine $\Lambda$ is adding the minimal number of indices corresponding to the largest entries of $\br_\Lambda$ to $\Lambda$, such that \eqref{update_rule_sect4} is satisfied. This process of discarding basis indices from approximate solutions is called \emph{coarsening}. 
In \cite{CohenDahmenDeVore:01}, recurrent coarsening steps are added after a certain number of refinements. In \cite{GantumurHarbrechtStevenson:07}, the authors give a modified adaptive Galerkin method that is quasi-optimal without requiring coarsening by choosing sufficiently small $\alpha < \kappa(\bA)^{-\frac{1}{2}}$.

In cases where $\Lambda = \Lambda_0 \times \cdots \times \Lambda_J$ has a Cartesian product structure, we are thus interested in iteratively refining $\Lambda$ while guaranteeing quasi-optimality without recurrent coarsening.
In \cite{BachmayrDahmen:14}, another type of adaptive algorithm based on inexact Richardson iteration is considered that after several steps of refinement performs a coarsening based on contractions. 
While the construction ensures quasi-optimal sizes of the discretization in the tensor modes, coarsening with a sufficiently large error tolerance is required, which in turn can be done only when a certain error reduction has been achieved. Especially for large numbers of tensor modes, this interplay leads to quantitatively unfavorable performance. Moreover, an adaptive Galerkin method for hierarchical tensor format was introduced in \cite{AliUrban:20}, where recurrent coarsening also played a crucial role for optimality of discretizations. 

In what follows, we first introduce the expansion method given in Algorithm \ref{alg_expand} in details that refines $\Lambda$ to $\bar{\Lambda}$ in a tensor product structure, which is based on contractions of approximate residual $\br_\Lambda$. In Theorem \ref{thm:bound_bar_Lambda} we then prove that the total number of added indices in all tensor modes, $\sum_{j=0}^{J} \#(\bar{\Lambda}_j \backslash \Lambda_j)$, can be related to that of the optimal product set $\widehat\Lambda$ in the sense that it satisfies \eqref{update_rule_sect4} while it has a near-minimal number of added indices in all tensor modes. In Theorem \ref{thm:optimal_set_size}, with setting $\alpha < \kappa(\bA)^{-\frac{1}{2}}$, we show the quasi-optimality of set $\Lambda^*$, where $\Lambda^* = \Lambda_0^* \times \cdots \times \Lambda_J^* \subset \cG$ is a minimizer of $\sum_{j=0}^J \#(\Lambda_j^* \backslash \Lambda_j)$ among the product sets such that
\[
    \norm{\Restr_{\Lambda^*}(\bA \bw_\Lambda -\bbf)} \geq \alpha \norm{\bA \bw_\Lambda - \bbf} .
\]
Altogether, these results establish quasi-optimality of $\bar{\Lambda}$ as constructed by Algorithm \ref{alg_expand}, which is illustrated in Theorem \ref{thm:complexity}.

We recall the following proposition from \cite[Prop.~2]{BachmayrDahmen:14}, which illustrates some properties of contractions that are crucial for our expansion method. Property (ii) bounds coarsening errors in terms of contractions, while property (iii) provides a way of computing contractions from singular value decompositions of matricizations without high-dimensional summations.
\begin{prop}
    \label{prop:contractions}
    Let $\bv \in \ell_2(\cG)$.
    \begin{enumerate}[{\rm(i)}]
        \item We have $\|\bv\| = \| \pi^{(j)}(\bv) \|$, $j = 0,1,\dots,J$.
        \item Let $\Lambda \subset \cG$; then \[ \|\bv - \Restr_{\Lambda}\bv \|^2 \leq \sum_{(\bar\nu_,\lambda) \in \cG_0 \backslash \Lambda_0}|\pi^{(0)}_{(\bar\nu,\lambda)}(\bv)|^2 + \sum_{j=1}^{J} \sum_{\nu_j \in \cG_j \backslash \Lambda_j} |\pi^{(j)}_{\nu_j}(\bv)|^2, \]
        \item For $j=0,1,\ldots,J$, let $\mathbf{U}^{(j)}$ be the mode frames of a higher-order singular value decomposition, and let $\sigma^{(j)}$ be the corresponding sequences of singular values. Then 
        \begin{gather}
            \pi^{(0)}_{(\bar\nu,\lambda)}(\bv) = \biggl( \sum_{k} |\mathbf{U}^{0}_{(\bar\nu,\lambda),k}|^2 |\sigma^{(0)}_k|^2 \biggr)^{\frac{1}{2}}, \quad  (\bar\nu,\lambda) \in \cG_0, \\
            \pi^{(j)}_{\nu_j}(\bv) = \biggl( \sum_{k} |\mathbf{U}^{(j)}_{\nu_j,k}|^2 |\sigma^{(j)}_k|^2 \biggr)^{\frac{1}{2}}, \quad \nu_j \in \cG_j, \quad j \in \{1,\dots,J\}.
        \end{gather}
    \end{enumerate}
\end{prop}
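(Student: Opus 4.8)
The plan is to verify the three assertions directly from the definition of the contractions in \eqref{eq:contraction}, treating the sparse expansion mode $0$ and the parametric modes $j=1,\dots,J$ in parallel, since they differ only in the notation for the relevant index set ($\cG_0 = \cF \times \cS$ versus $\cG_j = \N_0$). For (i), I would start from $\norm{\pi^{(j)}(\bv)}^2 = \sum_{\nu_j \in \cG_j}\abs{\pi^{(j)}_{\nu_j}(\bv)}^2$ and insert the defining expression; since every summand is nonnegative, the iterated sum may be rearranged into a single sum of $\abs{\bv_g}^2$ over all $g \in \cG$, which is $\norm{\bv}^2$. The case $j=0$ is identical.

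For (ii), I would note that $\bv - \Restr_\Lambda \bv$ agrees with $\bv$ on $\cG \setminus \Lambda$ and vanishes elsewhere, and that an index of $\cG = \cG_0 \times \cdots \times \cG_J$ fails to lie in the product set $\Lambda = \Lambda_0 \times \cdots \times \Lambda_J$ precisely when at least one of its $J+1$ mode components lies outside the corresponding factor. Hence $\cG \setminus \Lambda \subseteq \bigcup_{j=0}^J \{ g \in \cG : g_j \notin \Lambda_j \}$, and since the summands $\abs{\bv_g}^2$ are nonnegative, summing over this union is bounded by the sum of the contributions of the individual sets; each such contribution equals $\sum_{\nu_j \in \cG_j \setminus \Lambda_j}\abs{\pi^{(j)}_{\nu_j}(\bv)}^2$ by the definition of the contraction (using nonnegativity once more to rearrange the inner sum). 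Adding these up yields the stated bound.

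For (iii), I would invoke the singular value decomposition of the matricization behind each mode frame: $\cM_{\{j\}}(\bv) = \sum_k \sigma^{(j)}_k\, \mathbf{U}^{(j)}_k \otimes \mathbf{V}^{(j)}_k$ with $\{\mathbf{U}^{(j)}_k\}_k$ orthonormal in $\ell_2(\cG_j)$ and $\{\mathbf{V}^{(j)}_k\}_k$ orthonormal in $\ell_2$ of the complementary index set. Then $\abs{\pi^{(j)}_{\nu_j}(\bv)}^2$ is exactly the squared $\ell_2$-norm of the row of $\cM_{\{j\}}(\bv)$ indexed by $\nu_j$; expanding this squared norm as a double sum over $k, k'$ and using orthonormality of the $\mathbf{V}^{(j)}_k$ collapses it to $\sum_k \abs{\sigma^{(j)}_k}^2\,\abs{(\mathbf{U}^{(j)}_k)_{\nu_j}}^2$, which is the claimed identity; the same computation with $\nu_j$ replaced by $(\bar\nu,\lambda) \in \cG_0$ gives the formula for $\pi^{(0)}$.

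The steps are essentially bookkeeping, and I do not expect a genuine obstacle; this is the computation behind \cite[Prop.~2]{BachmayrDahmen:14}, adapted to the present mode structure. The only points that deserve care are the rearrangements of sums over the bi-infinite index sets in (i) and (iii), which are justified throughout by nonnegativity of the summands so that no absolute-convergence issue arises, and, in (iii), the identification of the mode frame $\mathbf{U}^{(j)}$ as the system of left singular vectors of $\cM_{\{j\}}(\bv)$ with the complementary factors forming an orthonormal system, which is exactly the defining property of a higher-order singular value decomposition of a Hilbert-Schmidt operator.
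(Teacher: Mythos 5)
The paper does not prove Proposition \ref{prop:contractions} itself but cites it as \cite[Prop.~2]{BachmayrDahmen:14}; your argument is a correct, self-contained reconstruction of that proof, adapted to the present mode structure exactly as the paper's notation requires. In particular, the three key observations you use --- that $\abs{\pi^{(j)}_{\nu_j}(\bv)}^2$ is the squared row norm of $\cM_{\{j\}}(\bv)$, that $\cG \setminus \Lambda$ is covered by the union of the ``bad-in-mode-$j$'' slabs so Tonelli plus subadditivity gives (ii), and that orthonormality of the right singular system collapses the double sum in (iii) --- are precisely the steps behind the cited result, and your remarks about nonnegativity justifying the rearrangements and about identifying the leaf mode frames of a higher-order SVD with the left singular vectors of $\cM_{\{j\}}(\bv)$ are exactly the points that need care. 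No gaps.
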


To see how to construct a product index set that controls coarsening errors through the tails of contraction sequences, and since we need to construct an index set $\bar{\Lambda} \supset \Lambda$, we focus on the set of contractions associated with indices $\nu_j \in \tilde{\Lambda}_j \backslash \Lambda_j$ (referred to as the \textit{new indices}) for all tensor modes:
\begin{equation}
    \label{contractions_new}    
   \bigl \{ \pi^{(0)}_{(\bar\nu,\lambda)}(\br_\Lambda): (\bar\nu,\lambda) \in \tilde{\Lambda}_0 
    \backslash \Lambda_0 \bigr\} \cup \bigl\{  \pi^{(j)}_{\nu_j}(\br_\Lambda) : \nu_j \in \tilde{\Lambda}_j \backslash \Lambda_j, j = 1,\dots,J \bigr\}.
\end{equation}
We sort these contractions in a nonincreasing order, and denote the rearranged sequence by $(\pi^{*}_n)_{n\in\N}$. By plugging $\br_\Lambda$ into property $(ii)$ of Proposition \ref{prop:contractions}, we conclude
\[
    \norm{\br_\Lambda}^2 \leq \sum_{n=1}^\infty \abs{ \pi_{n}^{*}}^2 + \norm{\Restr_{\Lambda}\br_\Lambda}^2,
\]
where $\norm{\Restr_{\Lambda}\br_\Lambda}$ is computationally feasible, and given that $\norm{\br_\Lambda - (\bA\bw_{\Lambda}-\bbf)} \leq \delta \norm{\br_\Lambda}$, it is bounded proportionally to the Galerkin error, as
\[
    \frac{1}{1+\delta}\bignorm{\bA_\Lambda \bw_\Lambda - \bbf_\Lambda} \leq \norm{\Restr_{\Lambda}\br_\Lambda} \leq \frac{1}{1-\delta}\bignorm{\bA_\Lambda \bw_\Lambda - \bbf_\Lambda}.
\]
Accordingly, for a fixed $\alpha \in (0,1)$, there exists a minimal $M \in \N$ such that
\begin{equation}
    \label{update_rule_contr}
    \sum_{n=1}^M \abs{ \pi_{n}^{*}}^2 + \norm{\Restr_{\Lambda}\br_\Lambda}^2 
    \geq \alpha^2 \norm{\br_\Lambda}^2.
\end{equation}

Similarly, we now redistribute the indices in $\tilde\Lambda_0, \tilde \Lambda_1, \ldots, \tilde\Lambda_J$ corresponding to $\pi^{*}_1,\ldots, \pi^{*}_M$ to the respective component index sets, along with component index sets of $\Lambda$, to form $\Lambda^{\mathrm{min}}_0(\br_{\Lambda};M), \ldots, \Lambda^{\mathrm{min}}_J(\br_{\Lambda};M)$. For their Cartesian product, we write
\begin{equation}
	\label{set}
    \Lambda^{\mathrm{min}}(\br_{\Lambda}; M) = \Lambda_0^{\mathrm{min}}(\br_{\Lambda};M) \times \cdots \times \Lambda_J^{\mathrm{min}}(\br_{\Lambda};M),
\end{equation}
where
\[
    \Lambda_j^{\mathrm{min}}(\br_{\Lambda};M) = \Lambda_j \cup \{ \pi^{*}_i: \pi^{*}_i \in \tilde{\Lambda}_j, i=1,\ldots,M \}, \quad j = 0,1,\ldots,J.
\]
By construction, 
\begin{equation}
    \label{bound_of_Lambdat_L}
    \sum_{j = 0}^J \#(\Lambda_j^{\mathrm{min}}(\br_{\Lambda};M) \backslash \Lambda_j) = M,
\end{equation}
and
\begin{equation}
    \label{optimality_of_Lambda_min}
    \sum_{j =0}^J \sum_{\nu_j \in \tilde{\Lambda}_j \backslash  \Lambda_j^{\mathrm{min}}(\br_{\Lambda};M)} |\pi^{(j)}_{\nu_j}(\br_{\Lambda})|^2 = \min_{\hat{\Lambda}} \biggl\{\sum_{j=0}^J \sum_{\nu_j \in \tilde{\Lambda}_j \backslash  \hat{\Lambda}_j} |\pi^{(j)}_{\nu_j}(\br_{\Lambda})|^2 \biggr\},
\end{equation}
where $\hat{\Lambda}$ ranges over all product sets $\bigtimes_{j=0}^J \hat{\Lambda}_j \supset \Lambda$ satisfying $\sum_{j=0}^J \#(\hat{\Lambda}_j \backslash \Lambda_j) \leq M$.

Recalling \eqref{eq:contraction}, we note that the contraction for one tensor mode involves summations over index sets of all other tensor modes. Therefore, there are some summands associated with indices in $\tilde{\Lambda} \backslash \Lambda^{\mathrm{min}}(\br_{\Lambda}; M)$, leading to
\[
    \sum_{n=1}^M \abs{ \pi_{n}^{*}}^2 + \norm{\Restr_{\Lambda}\br_\Lambda}^2 \geq \norm{\Restr_{\Lambda^{\mathrm{min}}(\br_{\Lambda};M)} (\br_{\Lambda})}^2.
\]
As a result, we cannot deduce that $\norm{\Restr_{\Lambda^{\mathrm{min}}(\br_{\Lambda};M)} (\br_{\Lambda})} \geq \alpha \norm{\br}$ from condition \eqref{update_rule_contr}. Here, we can verify it numerically, as in line \ref{alg_expand_ifcond} of Algorithm \ref{alg_expand}, and perform further modifications to construct the index set if it is not already satisfied.

Let us recall that our goal is to construct a quasi-optimal set $\bar{\Lambda} = \bigtimes_{j=0}^J\bar{\Lambda}_j$ with $\Lambda \subset \bar{\Lambda} \subset \tilde{\Lambda}$. We note that from the definition of $\tilde{\Lambda}_j, \, j=1,\ldots,J$, given in \eqref{tilde_Lambda_j} and $\tilde{\Lambda} \supset \Lambda$, it holds that
\[
    \#(\tilde{\Lambda}_j \backslash \Lambda_j) = 1, \, j \in \{1,\dots,J\}.
\]
We emphasize that this property only holds true in our specific setting of parametric elliptic problems, where we use Legendre polynomial discretizations of the parametric space and start from empty index sets.

Let $\bar{\Lambda}_j = \tilde{\Lambda}_j, \, j=1,\ldots,J$, which are the largest possible extension index sets corresponding to parametric modes. We consider a nonincreasing rearrangement $(\pi^{**}_n)_{n\in\N}$
of the set of contractions corresponding to new indices only for the sparse extension mode
\begin{equation}
    \label{contractions_sparse}    
   \bigl \{ \pi^{(0)}_{(\bar\nu,\lambda)}(\br_\Lambda): (\bar\nu,\lambda) \in \tilde{\Lambda}_0 
    \backslash \Lambda_0  \bigr \}.
\end{equation}
Noting that $\norm{\pi^{(0)}(\br_\Lambda)} = \norm{\br_\Lambda}$, for a given $\alpha \in (0,1)$, we take the minimal $K \in \N$ such that
\begin{equation}
    \label{update_rule_sparse}
    \sum_{n=1}^K \abs{ \pi_{n}^{**}}^2 + \sum_{(\bar\nu,\lambda) \in \Lambda_0} |\pi^{(0)}_{(\bar\nu,\lambda)}(\br_\Lambda)|^2 \geq \alpha^2 \norm{\br_\Lambda}^2.
\end{equation}
We construct $\bar{\Lambda}_0$ by distributing these $K$ indices corresponding to $\pi^{**}_1,\ldots, \pi^{**}_K$ to $\bar\Lambda_0$, along with $\Lambda_0$. Rather than the case of $\Lambda^{\mathrm{min}}(\br_{\Lambda};M)$, since $\bar{\Lambda}_j = \tilde{\Lambda}_j$ for $j \in \{1,\dots,J\}$, we have
\[
    \sum_{n=1}^K \abs{ \pi_{n}^{**}}^2 + \sum_{(\bar\nu,\lambda) \in \Lambda_0} |\pi^{(0)}_{(\bar\nu,\lambda)}(\br_\Lambda)|^2 = \norm{\Restr_{\bar{\Lambda}}\br_{\Lambda}}^2
\]
thus requirement \eqref{update_rule_sect4} is satisfied. Additionally, we have
\[
    \#(\bar{\Lambda}_0 \backslash \Lambda_0) + \sum_{j=1}^{J}\#(\bar{\Lambda}_j \backslash \Lambda_j) = K +J.
\]

We summarize the above coordinates refinement procedure in Algorithm \ref{alg_expand}. In order to show quasi-optimality of the resulting set $\bar{\Lambda}$, we first bound the total number of added degrees of freedom in all tensor modes, $\sum_{j=0}^J \#(\bar{\Lambda}_j \backslash \Lambda_j)$, in terms of $\widehat\Lambda$, which we define as the index set that is in this sense smallest while still satisfying \eqref{update_rule_sect4}.

\begin{algorithm}
	\caption{$\Expand(\br_\Lambda, \Lambda, \tilde{\Lambda}, \alpha)$}
    \label{alg_expand}
    \begin{flushleft}
        output: $\bar{\Lambda}$ satisfying $\Lambda \subset \bar{\Lambda} \subset \tilde{\Lambda}$, $\norm{ \Restr_{\bar{\Lambda}} \br_{\Lambda} } \geq \alpha \norm{ \br_{\Lambda} }$
    \end{flushleft}
	\begin{algorithmic}[1]
		\State compute and rearrange contractions of new indices as specified in \eqref{contractions_new}
		\State find the minimal $M \in \mathbb{N}$ satisfying \eqref{update_rule_contr}.
		\State construct $\Lambda^{\mathrm{min}}(\br_{\Lambda};M)$ as \eqref{set} and compute $\norm{ \Restr_{\Lambda^{\mathrm{min}}(\br_{\Lambda};M)} (\br_{\Lambda}) }$
		\If{$\norm{ \Restr_{\Lambda^{\mathrm{min}}(\br_{\Lambda};M)} (\br_{\Lambda}) }  \geq \alpha \norm{ \br_\Lambda }$} \label{alg_expand_ifcond}
			\State $\bar{\Lambda} \leftarrow \Lambda^{\mathrm{min}}(\br_{\Lambda};M)$
		\Else
            \State $\bar{\Lambda}_i \leftarrow \tilde{\Lambda}_i, i = 1,\dots, J$ 
            \State rearrange contractions in \eqref{contractions_sparse} and find the minimal $K \in \mathbb{N}$ satisfying \eqref{update_rule_sparse}
            \State $\bar{\Lambda}_0 \leftarrow \Lambda_0 \cup \{(\bar\nu^j, \lambda^j),\   j = 1,\dots,K\}$
        \EndIf
		\State return $\bar{\Lambda}$
	\end{algorithmic}
\end{algorithm}

\begin{theorem}
    \label{thm:bound_bar_Lambda}
    For a fixed $\alpha \in (0,1)$, let $\widehat{\Lambda} = \widehat{\Lambda}_0 \times \cdots\times \widehat{\Lambda}_J \subset \cG$ be such that $\sum_{j=0}^J \#(\widehat{\Lambda}_j \backslash \Lambda_j)$ is minimal among such index sets satisfying \eqref{update_rule_sect4}.    
   Then $\bar{\Lambda}$, generated by Algorithm 3, satisfies $\norm{ \Restr_{\bar{\Lambda}} \br_{\Lambda} } \geq \alpha \norm{ \br_{\Lambda} }$ and has the following property:
\begin{equation}
    \label{set_size_ineq}
    \sum_{j=0}^J \#(\bar{\Lambda}_j \backslash \Lambda_j) \leq \sum_{j=0}^J \#(\widehat{\Lambda}_j \backslash \Lambda_j) + J .
\end{equation}
\end{theorem}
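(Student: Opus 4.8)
The plan is to treat the two branches of Algorithm~\ref{alg_expand} separately and, in each, to bound the number of indices it adds against $\widehat N := \sum_{j=0}^J \#(\widehat\Lambda_j \backslash \Lambda_j)$ by presenting the minimizing set $\widehat\Lambda$ (suitably enlarged) as a feasible competitor in the greedy rule the algorithm actually uses. Throughout I would use that $\br_\Lambda$ is supported on $\tilde\Lambda$, so that $\pi^{(j)}_{\nu_j}(\br_\Lambda)=0$ whenever $\nu_j \notin \tilde\Lambda_j$; in particular, without loss of generality $\widehat\Lambda \subseteq \tilde\Lambda$, since replacing $\widehat\Lambda$ by $\widehat\Lambda \cap \tilde\Lambda$ leaves $\Restr_{\widehat\Lambda}\br_\Lambda$ unchanged and does not increase $\widehat N$. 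The claim that $\bar\Lambda$ satisfies $\norm{\Restr_{\bar\Lambda}\br_\Lambda} \geq \alpha\norm{\br_\Lambda}$ is then immediate: in the first branch it is the tested condition on line~\ref{alg_expand_ifcond}, and in the second branch, because $\bar\Lambda_j = \tilde\Lambda_j$ for $j=1,\dots,J$, $\bar\Lambda_0 \subseteq \tilde\Lambda_0$, and $\supp\br_\Lambda \subseteq \tilde\Lambda$, one has the identity $\norm{\Restr_{\bar\Lambda}\br_\Lambda}^2 = \sum_{(\bar\nu,\lambda)\in\bar\Lambda_0}|\pi^{(0)}_{(\bar\nu,\lambda)}(\br_\Lambda)|^2 = \sum_{(\bar\nu,\lambda)\in\Lambda_0}|\pi^{(0)}_{(\bar\nu,\lambda)}(\br_\Lambda)|^2 + \sum_{n=1}^{K}|\pi^{**}_n|^2$, which is $\geq \alpha^2\norm{\br_\Lambda}^2$ by the defining property \eqref{update_rule_sparse} of $K$.

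For the first branch, $\bar\Lambda = \Lambda^{\mathrm{min}}(\br_\Lambda;M)$ and hence $\sum_{j=0}^J \#(\bar\Lambda_j \backslash \Lambda_j) = M$, so it suffices to show $M \leq \widehat N$. I would apply property~(ii) of Proposition~\ref{prop:contractions} to $\Restr_{\widehat\Lambda}\br_\Lambda$ with the set $\Lambda$; since $\Lambda \subseteq \widehat\Lambda$ one has $\Restr_\Lambda \Restr_{\widehat\Lambda}\br_\Lambda = \Restr_\Lambda\br_\Lambda$ and, using $\pi^{(j)}_{\nu_j}(\Restr_{\widehat\Lambda}\br_\Lambda) \leq \pi^{(j)}_{\nu_j}(\br_\Lambda)$ together with the vanishing of these contractions off $\widehat\Lambda_j$, this gives $\norm{\Restr_{\widehat\Lambda}\br_\Lambda}^2 \leq \norm{\Restr_\Lambda\br_\Lambda}^2 + \sum_{j=0}^{J}\sum_{\nu_j \in \widehat\Lambda_j \backslash \Lambda_j}|\pi^{(j)}_{\nu_j}(\br_\Lambda)|^2$. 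The contractions in the last sum form a subcollection of at most $\widehat N$ members of the nonincreasing list $(\pi^*_n)_n$ from \eqref{contractions_new}, so their sum is at most $\sum_{n=1}^{\widehat N}|\pi^*_n|^2$. Combining with $\norm{\Restr_{\widehat\Lambda}\br_\Lambda}^2 \geq \alpha^2\norm{\br_\Lambda}^2$ shows that $N = \widehat N$ already satisfies condition \eqref{update_rule_contr}, whence by minimality of $M$ we get $M \leq \widehat N$, which is in fact stronger than \eqref{set_size_ineq} in this branch.

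For the second branch, $\bar\Lambda_j = \tilde\Lambda_j$ for $j=1,\dots,J$ and $\#(\bar\Lambda_0 \backslash \Lambda_0) = K$, so $\sum_{j=0}^J \#(\bar\Lambda_j \backslash \Lambda_j) = K + J$ and it suffices to show $K \leq \widehat N$. Here I would pass from $\widehat\Lambda$ to the enlarged product set $\widehat\Lambda' := \widehat\Lambda_0 \times \tilde\Lambda_1 \times \cdots \times \tilde\Lambda_J$, which contains $\widehat\Lambda$ (since $\widehat\Lambda_j \subseteq \tilde\Lambda_j$) and hence still satisfies $\norm{\Restr_{\widehat\Lambda'}\br_\Lambda} \geq \norm{\Restr_{\widehat\Lambda}\br_\Lambda} \geq \alpha\norm{\br_\Lambda}$; moreover, exactly as for $\bar\Lambda$ above, $\norm{\Restr_{\widehat\Lambda'}\br_\Lambda}^2 = \sum_{(\bar\nu,\lambda)\in\widehat\Lambda_0}|\pi^{(0)}_{(\bar\nu,\lambda)}(\br_\Lambda)|^2 = \sum_{(\bar\nu,\lambda)\in\Lambda_0}|\pi^{(0)}_{(\bar\nu,\lambda)}(\br_\Lambda)|^2 + \sum_{(\bar\nu,\lambda)\in\widehat\Lambda_0 \backslash \Lambda_0}|\pi^{(0)}_{(\bar\nu,\lambda)}(\br_\Lambda)|^2$. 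The last sum has $\#(\widehat\Lambda_0 \backslash \Lambda_0) \leq \widehat N$ terms drawn from the nonincreasing list $(\pi^{**}_n)_n$ of \eqref{contractions_sparse}, hence is at most $\sum_{n=1}^{\widehat N}|\pi^{**}_n|^2$; the resulting inequality is precisely condition \eqref{update_rule_sparse} with $K$ replaced by $\widehat N$, so minimality of $K$ yields $K \leq \widehat N$. Together with the first branch this gives \eqref{set_size_ineq}.

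The only genuinely delicate point is the very reason the algorithm needs two branches: the contraction-based set $\Lambda^{\mathrm{min}}(\br_\Lambda;M)$ need not meet the bulk criterion, because a contraction $\pi^{(j)}_{\nu_j}$ aggregates mass shared with indices not yet selected in the other modes, so the partial sums in \eqref{update_rule_contr} overestimate $\norm{\Restr_{\Lambda^{\mathrm{min}}}\br_\Lambda}^2$. This forces the fallback and, with it, the additive $J$ in \eqref{set_size_ineq}. What makes the fallback affordable — and the competitor argument in the second branch possible with only this overhead — is the structural fact noted just before the statement that $\#(\tilde\Lambda_j \backslash \Lambda_j) = 1$ in each parametric mode: saturating those modes to $\tilde\Lambda_j$ costs one index apiece, so a single competitor that is maximal in the parametric modes and optimal only in the sparse-expansion mode is available to bound $K$. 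Beyond that, I would only need to verify carefully the elementary identities expressing $\norm{\Restr_{\bar\Lambda}\br_\Lambda}^2$ and $\norm{\Restr_{\widehat\Lambda'}\br_\Lambda}^2$ through the mode-$0$ contractions, which rest on $\supp\br_\Lambda \subseteq \tilde\Lambda$ together with the inclusions $\bar\Lambda_0 \subseteq \tilde\Lambda_0$ and $\bar\Lambda_j = \tilde\Lambda_j$ for $j \geq 1$.
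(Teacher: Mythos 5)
Your proposal is correct and follows essentially the same two-branch structure as the paper's proof: in each branch the optimal $\widehat\Lambda$ is exhibited as a competitor that already satisfies condition \eqref{update_rule_contr} (resp.\ \eqref{update_rule_sparse}) with $N = \widehat N$, and then minimality of $M$ (resp.\ $K$) gives the bound. The only cosmetic difference is that the paper argues by contradiction and asserts the key inequality $\norm{\Restr_{\widehat\Lambda}\br_\Lambda}^2 \leq \norm{\Restr_\Lambda\br_\Lambda}^2 + \sum_{n=1}^{\widehat N}|\pi_n^*|^2$ without derivation, whereas you spell out the reduction $\widehat\Lambda \subseteq \tilde\Lambda$ and the use of Proposition~\ref{prop:contractions}(ii) to obtain it.
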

\begin{proof}
    On the one hand, if the condition in line 5 is satisfied, then the resulting set $\bar\Lambda$ satisfies \eqref{update_rule_sect4}. With
    \[
        M = \sum_{j=0}^J \#(\bar{\Lambda}_j \backslash \Lambda_j), \quad M^* = \sum_{j=0}^J \#(\widehat{\Lambda}_j \backslash \Lambda_j),
    \]
we show that $M = M^*$, implying \eqref{set_size_ineq} in this case. Since $\widehat{\Lambda}$ is the optimal product set satisfying \eqref{update_rule_sect4}, it follows that $M \geq M^*$. To show that the strict inequality cannot hold, we proceed by contradiction. Assuming that $M > M^*$, and since $M$ is the smallest integer satisfying \eqref{update_rule_contr}, the following inequality holds 
    \[
        \norm{ \Restr_{\widehat{\Lambda}} \br_{\Lambda} }^2 \leq \norm{ \Restr_\Lambda \br_\Lambda }^2 + \sum_{n=1}^{M^*} \abs{ \pi_{n}^{*}}^2 < \alpha^2 \norm{ \br_{\Lambda} }^2.
    \]
    This leads to a contradiction with the condition \eqref{update_rule_sect4} on $\widehat{\Lambda}$.
    
    On the other hand, if the condition in line \ref{alg_expand_ifcond} is not satisfied, we first demonstrate that $\bar{\Lambda}$ satisfies $\| \Restr_{\bar{\Lambda}} \br_{\Lambda} \| \geq \alpha \| \br_{\Lambda}\|$. 
    By Proposition \ref{prop:contractions}(i),
    \[ \|\br_{\Lambda}\|^2 = \sum_{n=1}^\infty \abs{ \pi_{n}^{**}}^2 + \sum_{(\bar\nu,\lambda) \in \Lambda_0} |\pi^{(0)}_{(\bar\nu,\lambda)}(\br_\Lambda)|^2
    \]
    Since $\bar{\Lambda}_i = \tilde{\Lambda}_i, i = 1,\dots,J$ with $\tilde{\Lambda}_i$ being the largest possible set, and $K$ as defined in \eqref{update_rule_sparse}, it follows that 
    \[
        \norm{ \Restr_{\bar{\Lambda}} \br_{\Lambda} }^2 =  \sum_{n=1}^{K} \abs{ \pi_{n}^{**}}^2 + \sum_{(\bar\nu,\lambda) \in \Lambda_0} \abs{ \pi^{(0)}_{(\bar\nu,\lambda)}(\br_\Lambda) }^2 \geq \alpha^2 \norm{ \br_{\Lambda} }^2.
    \]
    Next, we prove \eqref{set_size_ineq}, namely 
    \[
        \sum_{j=0}^J \#(\bar{\Lambda}_j \backslash \Lambda_j) = K + J \leq \sum_{j=0}^J \#(\widehat{\Lambda}_j \backslash \Lambda_j) + J = M^* + J,
        \]
    which is equivalent to proving that $K \leq  M^*$. Again, we proceed by contradiction and assume that $K > M^*$. Since $K$ is the smallest integer satisfying \eqref{update_rule_sparse}, we have 
    \[
        \norm{ \Restr_{\widehat{\Lambda}} \br_{\Lambda} }^2 \leq \sum_{n=1}^{M^*} \abs{ \pi_{n}^{**}}^2 + \sum_{(\bar\nu,\lambda) \in \Lambda_0} |\pi^{(0)}_{(\bar\nu,\lambda)}(\br_\Lambda)|^2 < \alpha^2 \|\br_{\Lambda}\|^2,
    \]
    which contradicts the definition of $\widehat{\Lambda}$.
\end{proof}

The proof of the following result combines ideas from \cite[Lemma~2.1]{GantumurHarbrechtStevenson:07} and \cite[Theorem~7]{BachmayrDahmen:14}. 

\begin{theorem}
    \label{thm:optimal_set_size}
    Let $\alpha \in (0,\kappa(\bA)^{-\frac{1}{2}})$ and $\Lambda = \Lambda_0\times \cdots \times \Lambda_J \subset \cG$ be a finite product set. Let $\bw_{\Lambda} \in \ell_2(\cG)$ with $\supp \bw_\Lambda \subseteq \Lambda$. We assume that $\pi^{(0)}(\bu) \in \mathcal{A}^{s_1}$, $\pi^{(j)}(\bu) \in \mathcal{A}^{s_2}$ for $j=1,\dots,J$. Let $\Lambda^* = \Lambda_0^* \times \cdots \times \Lambda_J^* \subset \cG$ be a minimizer of $\sum_{j=0}^J \#(\Lambda_j^* \backslash \Lambda_j)$ among the product sets such that
    \begin{equation}
        \label{eq:update_rule_exact_resi}
        \norm{\Restr_{\Lambda^*}(\bA \bw_\Lambda -\bbf)} \geq \alpha \norm{\bA \bw_\Lambda - \bbf} .
    \end{equation}
    Then 
    \begin{multline}
        \label{set_size_bound}
        \sum_{j=0}^J \#(\Lambda_j^* \backslash \Lambda_j)  \leq 2C^{\frac{1}{s_1}} \norm{\bA \bw_{\Lambda} - \bbf}^{-\frac{1}{s_1}} \norm{\pi^{(0)}(\bu)}_{\cA^{s_1}}^{\frac{1}{s_1}}  \\[-9pt]  
          + 2 C^{\frac{1}{s_2}} \norm{\bA \bw_{\Lambda} - \bbf}^{-\frac{1}{s_2}} \sum_{j=1}^{J} \norm{\pi^{(j)}(\bu)}_{\cA^{s_2}}^{\frac{1}{s_2}},
    \end{multline}
    where $C = (J+1) \sqrt{\norm{\bA}} / \lambda $.
\end{theorem}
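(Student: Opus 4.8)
The plan is to combine the coarsening-free argument of \cite[Lemma~2.1]{GantumurHarbrechtStevenson:07} with the contraction-based product-set construction of \cite[Theorem~7]{BachmayrDahmen:14}. Write $\br := \bA \bw_\Lambda - \bbf$, so that $\br = \bA(\bw_\Lambda - \bu)$. Since $\Lambda^*$ minimizes $\sum_{j=0}^J \#(\Lambda_j^* \setminus \Lambda_j)$ over all product sets satisfying \eqref{eq:update_rule_exact_resi}, it suffices to exhibit \emph{one} product set $\Lambda' = \Lambda'_0 \times \cdots \times \Lambda'_J$ with $\Lambda \subseteq \Lambda'$ such that $\norm{\Restr_{\Lambda'} \br} \geq \alpha \norm{\br}$ and $\sum_{j=0}^J \#(\Lambda'_j \setminus \Lambda_j)$ is bounded by the right-hand side of \eqref{set_size_bound}; the assertion then follows by minimality of $\Lambda^*$.

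First I would build $\Lambda'$ from the contractions of the exact solution $\bu$. Fix an error level $\epsilon > 0$, to be chosen later as a fixed multiple of $\norm{\br}$. For each mode $j \in \{0,\dots,J\}$, let $\Lambda'_j \supseteq \Lambda_j$ be obtained by adjoining to $\Lambda_j$ the finitely many largest entries of the contraction sequence $\pi^{(j)}(\bu)$ that are not already in $\Lambda_j$, so that the remaining $\ell_2$-tail of $\pi^{(j)}(\bu)$ is at most $\epsilon/\sqrt{J+1}$. By Proposition \ref{prop:contractions}(ii) this gives $\norm{\bu - \Restr_{\Lambda'} \bu} \leq \epsilon$. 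Since $\pi^{(0)}(\bu) \in \cA^{s_1}$ and $\pi^{(j)}(\bu) \in \cA^{s_2}$ for $j = 1,\dots,J$, the characterization of the approximation classes in \eqref{eq:approximation_space} bounds the number of adjoined indices by $\#(\Lambda'_j \setminus \Lambda_j) \leq \norm{\pi^{(j)}(\bu)}_{\cA^{s_j}}^{1/s_j} (\epsilon/\sqrt{J+1})^{-1/s_j}$. Summing over $j$, splitting into the sparse-expansion mode $j=0$ (exponent $s_1$) and the parametric modes $j=1,\dots,J$ (exponent $s_2$), and taking $\epsilon$ as an appropriate fixed multiple of $\norm{\br}$ produces a bound of the form of \eqref{set_size_bound}; the factor $(J+1)$ in $C$ reflects the $\sqrt{J+1}$ split of the error budget, and the dependence $\sqrt{\norm{\bA}}/\lambda$ the spectral estimates entering the next step.

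The crux is to verify that this $\Lambda'$ satisfies \eqref{eq:update_rule_exact_resi}, and it is here that $\alpha < \kappa(\bA)^{-\frac{1}{2}}$ is used. Let $\bu_{\Lambda'}$ be the Galerkin solution of $\bA_{\Lambda'} \bu_{\Lambda'} = \bbf_{\Lambda'}$, that is, the $\bA$-orthogonal projection of $\bu$ onto $\{\bv : \supp \bv \subseteq \Lambda'\}$. Galerkin orthogonality gives, for every $\bv$ supported in $\Lambda'$, $\langle \br, \bv \rangle = \langle \bA(\bw_\Lambda - \bu), \bv\rangle = \langle \bA(\bw_\Lambda - \bu_{\Lambda'}), \bv\rangle$. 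As $\supp \bw_\Lambda \subseteq \Lambda \subseteq \Lambda'$ and $\supp \bu_{\Lambda'} \subseteq \Lambda'$, the vector $\bw_\Lambda - \bu_{\Lambda'}$ is supported in $\Lambda'$; choosing $\bv = \bw_\Lambda - \bu_{\Lambda'}$ yields $\langle \Restr_{\Lambda'} \br, \bw_\Lambda - \bu_{\Lambda'}\rangle = \norm{\bw_\Lambda - \bu_{\Lambda'}}_\bA^2$, hence by Cauchy--Schwarz and \eqref{UEA_operator}, $\norm{\Restr_{\Lambda'} \br} \geq \gamma^{1/2} \norm{\bw_\Lambda - \bu_{\Lambda'}}_\bA$. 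Using the triangle inequality, the quasi-optimality bound $\norm{\bu - \bu_{\Lambda'}}_\bA \leq \norm{\bu - \Restr_{\Lambda'} \bu}_\bA \leq \norm{\bA}^{1/2} \epsilon$, and the spectral equivalence $\norm{\bw_\Lambda - \bu}_\bA \geq \norm{\bA}^{-1/2}\norm{\br}$ (which follows from $\norm{\br} = \norm{\bA(\bw_\Lambda - \bu)}$), one obtains $\norm{\Restr_{\Lambda'}\br} \geq \kappa(\bA)^{-\frac{1}{2}}\norm{\br} - \sqrt{\gamma \norm{\bA}}\,\epsilon$. Since $\alpha < \kappa(\bA)^{-\frac{1}{2}}$, the factor $\kappa(\bA)^{-\frac12} - \alpha$ is positive, so $\epsilon$ can be chosen as a fixed multiple of $\norm{\br}$ for which $\norm{\Restr_{\Lambda'}\br} \geq \alpha\norm{\br}$; this choice of $\epsilon$ fixes the constant $C$ in \eqref{set_size_bound}.

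The main obstacle is precisely this verification step: one must bound $\norm{\Restr_{\Lambda'}\br}$ from below for a set $\Lambda'$ that is only known to approximate $\bu$ well — not adapted to the residual or to the range of $\bA$ — and the only available leverage is Galerkin orthogonality on $\Lambda'$ together with the conditioning of $\bA$. Getting the constant $C$ into the stated shape requires careful bookkeeping of how the single $\ell_2$ error budget $\epsilon$ is distributed over the $J+1$ tensor modes in Proposition \ref{prop:contractions}(ii) and of the chain of equivalences between $\norm{\cdot}$, $\norm{\cdot}_\bA$ and $\norm{\bA \cdot}$; the constraint $\alpha < \kappa(\bA)^{-\frac{1}{2}}$ is exactly what leaves enough room in this chain to dispense with any coarsening step, while the extra $+J$ in the related estimate \eqref{set_size_ineq} of Theorem \ref{thm:bound_bar_Lambda} accounts for the at most one index added per parametric mode by the three-term recursion.
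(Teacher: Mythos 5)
Your proof is correct and follows essentially the same strategy as the paper's: construct a competitor product set $\Lambda'$ by coarsening the contractions of the exact solution $\bu$, bound $\sum_j \#(\Lambda'_j\setminus\Lambda_j)$ via the approximation class norms, verify $\norm{\Restr_{\Lambda'}\br}\geq\alpha\norm{\br}$ using the Galerkin solution on $\Lambda'$ together with Cauchy--Schwarz and the spectral bounds, and then invoke minimality of $\Lambda^*$. The one technical deviation worth flagging is in the verification step: you use the triangle inequality $\norm{\bw_\Lambda-\bu_{\Lambda'}}_\bA\geq\norm{\bw_\Lambda-\bu}_\bA-\norm{\bu-\bu_{\Lambda'}}_\bA$, whereas the paper exploits the Pythagorean identity from Galerkin orthogonality, $\norm{\bu_{\Lambda'}-\bw_\Lambda}_\bA^2 = \norm{\bu-\bw_\Lambda}_\bA^2-\norm{\bu-\bu_{\Lambda'}}_\bA^2$, which gives a sharper dependence between $\epsilon$ (resp.\ the paper's $\lambda$) and $\kappa(\bA)^{-1/2}-\alpha$. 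Similarly, you split the coarsening error budget in the $\ell_2$ sense ($\epsilon/\sqrt{J+1}$ per mode, consistent with Proposition~\ref{prop:contractions}(ii)), while the paper uses an $\ell_1$ equilibration of the bound $\norm{\bu-\Restr_\Lambda\bu}\leq\sum_j\norm{\pi^{(j)}(\bu)-\Restr_{\Lambda_j}\pi^{(j)}(\bu)}$, producing the factor $J+1$ rather than $\sqrt{J+1}$. Consequently the constant $C$ your argument yields differs from the specific $C=(J+1)\sqrt{\norm{\bA}}/\lambda$ stated in the theorem, though it has the same qualitative form and is in fact slightly better in its $J$-dependence. Your direct per-mode coarsening also sidesteps the paper's detour of bounding the optimal joint-coarsening count $M$ and showing $M\geq J+1$ to control the ceiling terms, which is a modest simplification. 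One nit: be explicit that ``adjoining the largest entries of $\pi^{(j)}(\bu)$ not already in $\Lambda_j$'' means taking the best $N_j$-term index set for $\pi^{(j)}(\bu)$ and unioning with $\Lambda_j$; the $\cA^{s_j}$-bound controls the tail outside the best $N_j$-term set, which dominates the tail outside the union, and $\#(\Lambda'_j\setminus\Lambda_j)\leq N_j$.
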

\begin{proof}
Let $\lambda > 0$ be a constant such that $\alpha \leq \kappa(\bA)^{-\frac{1}{2}}\left(1- \|\bA\|\lambda^2\right)^{\frac{1}{2}}$. Since $\alpha \in (0,\kappa(\bA)^{-\frac{1}{2}})$, it requires that $0< \left(1-\|\bA\|\lambda^2\right)^{\frac{1}{2}}<1$, which implies that $0< \lambda < \|\bA\|^{-\frac{1}{2}}$. 

    For each $N \in \N$ and each $\bv \in \ell_2(\cG)$, there exists $\Lambda(\bv;N) = \Lambda_0(\bv;N) \times \cdots \times \Lambda_J(\bv;N) \subset \cG$ satisfying $\sum_{j=0}^J \# \Lambda_j(\bv;N) = N$, such that
    \begin{equation}
        \label{eq:optimal_coarsening}
        \| \bv - \Restr_{\Lambda(\bv;N)}\bv \| = \min \biggl\{ \|\bv-\mathbf{w}\| \colon \sum_{j=0}^J\#\supp_j(\mathbf{w})\leq N \biggr\} .
    \end{equation}
    Let $M \in \N$ be the smallest integer such that $\| \bu - \Restr_{\Lambda(\bu;M)}\bu \| \leq \lambda\|\bu-\bw_{\Lambda} \|_\bA$. For each $j\in \{0,\ldots,J\}$, it holds that $\Lambda_j(\bu;M) \neq \emptyset$, which implies $M \geq J+1$. Otherwise, we have $\Lambda(\bu;M) = \emptyset$, which leads to
    \[
        \norm{\bu} = \| \bu - \Restr_{\Lambda(\bu;M)}\bu \| \leq \lambda\|\bu-\bw_{\Lambda} \|_\bA \leq \lambda \norm{\bA}^{\frac{1}{2}} \norm{\bu-\bw_{\Lambda}} < \norm{\bu},
    \]
    where the last inequality follows from $\lambda < \norm{\bA}^{-\frac{1}{2}}$ and it is a contradiction.

    By the definition of $M$ and the optimality \eqref{eq:optimal_coarsening}, we have
    \begin{align*}
    \lambda\|\bu-&\bw_{\Lambda} \|_\bA  <  \|\bu-\Restr_{\Lambda(\bu;M-1)}\bu\| \\ 
    & \leq \inf \biggl\{ \bignorm{\pi^{(0)}(\bu)-\Restr_{\Lambda_0} \pi^{(0)}(\bu)} + \sum_{j=1}^J \bignorm{\pi^{(j)}(\bu)-\Restr_{\Lambda_j} \pi^{(j)}(\bu)} \; \colon \sum_{j=0}^{J} \# \Lambda_j \leq M-1 \biggr\}  \,,
      \end{align*}
    where
    \begin{multline*}
      \bignorm{\pi^{(0)}(\bu)-\Restr_{\Lambda_0} \pi^{(0)}(\bu)} + \sum_{j=1}^J \bignorm{\pi^{(j)}(\bu)-\Restr_{\Lambda_j} \pi^{(j)}(\bu)}  \\[-6pt]
        \leq 
        (\# \Lambda_0)^{-s_1} \bignorm{\pi^{(0)}(\bu)}_{\cA^{s_1}} + \sum_{j=1}^{J} (\# \Lambda_j)^{-s_2} \bignorm{\pi^{(j)}(\bu)}_{\cA^{s_2}} .
    \end{multline*}
    To obtain an upper bound for $M$, we would like to find index sets $\Lambda_0,\ldots,\Lambda_J$ with minimal $\sum_{j=0}^{J} \# \Lambda_i $ such that 
    \[
    \lambda\|\bu-\bw_{\Lambda} \|_\bA \geq (\# \Lambda_0)^{-s_1} \|\pi^{(0)}(\bu)\|_{\cA^{s_1}} + \sum_{j=1}^{J} (\# \Lambda_j)^{-s_2} \|\pi^{(j)}(\bu)\|_{\cA^{s_2}}
    \]
    to conclude that $M \leq \sum_{j=0}^{J} \# \Lambda_j $. Equilibrating the upper bound yields 
        \[
    \begin{gathered}
        \# \Lambda_0 = \left\lceil \left(\frac{(J+1)\|\pi^{(0)}(\bu)\|_{\cA^{s_1}}} {\lambda \|\bu-\bw_{\Lambda} \|_\bA}\right)^{\frac{1}{s_1}} \right\rceil, \\
        \# \Lambda_i = \left\lceil \left(\frac{(J+1) \|\pi^{(j)}(\bu)\|_{\cA^{s_2}}} {\lambda \|\bu-\bw_{\Lambda} \|_\bA}\right)^{\frac{1}{s_2}} \right\rceil, \quad j=1,\dots,J.
    \end{gathered}
    \]
    Given $M \geq J+1$, we note that $\lambda\norm{\bu-\bw_{\Lambda}}_\bA < \norm{\bu}$. By definition of $\norm{\cdot}_{\cA^s}$ given in \eqref{eq:approximation_space}, it holds that $\norm{\bu} \leq \norm{\pi^{(0)}(\bu)}_{\cA^{s_1}}$, and $ \norm{\bu}\leq \norm{\pi^{(j)}(\bu)}_{\cA^{s_2}}$ for $j= 1,\ldots,J$. This implies 
    \[
        \left(\frac{(J+1)\norm{\pi^{(0)}(\bu)}_{\cA^{s_1}}} {\lambda \norm{\bu-\bw_{\Lambda}}_\bA}\right)^{\frac{1}{s_1}} > 1, \quad \left(\frac{(J+1)\norm{\pi^{(j)}(\bu)}_{\cA^{s_2}}} {\lambda \norm{\bu-\bw_{\Lambda}}_\bA}\right)^{\frac{1}{s_2}} > 1.
    \]
    With the above estimates, we have
    \begin{align*}
        M & \leq \left\lceil \left(\frac{(J+1)\norm{\pi^{(0)}(\bu)}_{\cA^{s_1}}} {\lambda \norm{\bu-\bw_{\Lambda}}_\bA}\right)^{\frac{1}{s_1}} \right\rceil + \sum_{j=1}^{J} \left\lceil \left(\frac{(J+1)\norm{\pi^{(j)}(\bu)}_{\cA^{s_2}}} {\lambda \norm{\bu-\bw_{\Lambda}}_\bA}\right)^{\frac{1}{s_2}} \right\rceil \\
        & < 2\left(\frac{(J+1)\norm{\pi^{(0)}(\bu)}_{\cA^{s_1}}} {\lambda \norm{\bu-\bw_{\Lambda}}_\bA}\right)^{\frac{1}{s_1}} + 2\sum_{j=1}^{J} \left(\frac{(J+1)\norm{\pi^{(j)}(\bu)}_{\cA^{s_2}}} {\lambda \norm{\bu-\bw_{\Lambda}}_\bA}\right)^{\frac{1}{s_2}} 
    \end{align*}
    Together with $\|\bu-\bw_{\Lambda} \|_\bA \geq \|\bA\|^{-\frac{1}{2}}\|\bA \bw_{\Lambda} - \bbf\|$, we obtain
    \[
        M \leq 2C^{\frac{1}{s_1}} \norm{\bA \bw_{\Lambda} - \bbf}^{-\frac{1}{s_1}} \norm{\pi^{(0)}(\bu)}_{\cA^{s_1}}^{\frac{1}{s_1}} + 2 C^{\frac{1}{s_2}} \norm{\bA \bw_{\Lambda} - \bbf}^{-\frac{1}{s_2}} \sum_{j=1}^{J} \norm{\pi^{(j)}(\bu)}_{\cA^{s_2}}^{\frac{1}{s_2}},
    \]
    where $C = (J+1) \sqrt{\norm{\bA}} / \lambda$.

    Let $\hat{\Lambda} = \Lambda \cup  \Lambda(\bu;M) $, then the Galerkin solution $\bu_{\hat{\Lambda}}$ satisfies
    \[
      \|\bu - \bu_{\hat{\Lambda}}\|_\bA  \leq \| \bu -  \Restr_{\Lambda(\bu;M)}\bu \|_\bA \leq \|\bA\|^{\frac{1}{2}}\| \bu -  \Restr_{\Lambda(\bu;M)}\bu \| \leq \|\bA\|^{\frac{1}{2}}\lambda \|\bu-\bw_{\Lambda} \|_\bA .
    \]
    By Galerkin orthogonality $\| \bu_{\hat{\Lambda}} - \bw_{\Lambda} \|_\bA \geq (1-\|\bA\|\lambda^2)^{\frac{1}{2}}\|\bu - \bw_{\Lambda}\|_\bA$, and thus
    \[
        \begin{aligned}
            \|\Restr_{\hat{\Lambda}}(\bA \bw_{\Lambda} - \bbf)\| & = \| \Restr_{\hat{\Lambda}}(\bA \bu_{\hat{\Lambda}} - \bA \bw_{\Lambda})\| \geq \|\bA^{-1}\|^{-\frac{1}{2}}\|\bu_{\hat{\Lambda}}-\bw_{\Lambda}\|_\bA \\
            & \geq \|\bA^{-1}\|^{-\frac{1}{2}}(1-\|\bA\|\lambda^2)^{\frac{1}{2}}\|\bu - \bw_{\Lambda}\|_\bA \\
            & \geq \kappa(\bA)^{-\frac{1}{2}}(1-\|\bA\|\lambda^2)^{\frac{1}{2}}\|\bA \bw_{\Lambda} - \bbf\| \\
            & \geq \alpha\|\bA \bw_{\Lambda} - \bbf\|.
        \end{aligned} 
    \]
    Given that $\Lambda^*$ is the optimal set satisfying \eqref{eq:update_rule_exact_resi} and $\Lambda \subset \hat{\Lambda}$, we conclude that
    \[
        \sum_{j=0}^J \#(\Lambda_j^* \backslash \Lambda_j)  \leq \sum_{j=0}^J\#(\hat{\Lambda}_j\backslash \Lambda_j)   \leq M ,
    \]
    which is the inequality in \eqref{set_size_bound}.
\end{proof}

\subsection{Complexity}

Based on Theorems \ref{thm:bound_bar_Lambda} and \ref{thm:optimal_set_size}, we show that the adaptive solver proposed in Algorithm \ref{alg_adaptive} yields quasi-optimal discretizations without re-coarsening in Theorem \ref{thm:complexity}.

\begin{theorem}
    \label{thm:complexity}
    Let $\bu \in \ell_2(\cG)$ be the solution to equation \eqref{operator-equation}. We assume $\pi^{(0)}(\bu) \in \cA^{s_1}$, $\pi^{(j)}(\bu) \in \cA^{s_2}$ for $j=1,\dots,J$. Let $\alpha \in (0, \kappa(\bA)^{-\frac{1}{2}})$ be the bulk chasing parameter in line \ref{alg:Adap_expand}, $\mu= \frac{\alpha+\delta}{1-\delta}$, and choose the relative residual approximation tolerance $\delta \in (0,\alpha)$ in line \ref{alg:Adap_resi_relative_error} of Algorithm \ref{alg_adaptive}, such that $\mu < \kappa(\bA)^{-\frac{1}{2}}$. Let $0<\eta < \frac{(1-\delta)(\alpha-\delta)}{1+\delta}\kappa(\bA)^{-1}$ be the relative approximate Galerkin solution tolerance in line \ref{alg:Adap_solve_relative_error}. Then Algorithm \ref{alg_adaptive} produces an approximation $\bu_{\varepsilon}$ to $\bu$ with finite support $\supp\bu_{\varepsilon} \subseteq \Lambda$ in finitely many steps such that $\|\bA \bu_{\varepsilon} - \bbf\| \leq \varepsilon$
    and
    \begin{equation}
        \label{support_bound}
        \sum_{j=0}^J \#\Lambda_j \lesssim  \varepsilon^{-\frac{1}{s_1}}\norm{\pi^{(0)}(\bu)}_{\cA^{s_1}}^{\frac{1}{s_1}} + \varepsilon^{-\frac{1}{s_2}} \biggl( \sum_{j=1}^{J} \norm{\pi^{(j)}(\bu)}_{\cA^{s_2}}^{\frac{1}{s_2}} \biggr).
    \end{equation}
Moreover, with $\varepsilon_k$ as in Algorithm \ref{alg_adaptive},
    \begin{enumerate}[{\rm(i)}]
       \item If there exist $M>0$ and $0<p<2$ such that $\sigma_{t,j}(\bu_{\Lambda^k}) \leq M j^{-\frac1p}$ for $j \in \N$, $t=1,\dots,E$ for each $k$, we have
            \begin{equation}\label{rank_bound_1}
                \| \bu - \bu_k   \| \lesssim (J+1)\varepsilon_k, \quad
                \max\limits_{t=1,\dots,E}\rank_t (\bu_k) \lesssim (J+1)^2 M^{\frac{1}{s}} \varepsilon_k^{-\frac{1}{s}}, \quad s = \frac{1}{p}-\frac{1}{2}.
            \end{equation}
        \item If there exist $C,c,\beta>0$ such that $\sigma_{t,j}(\bu_{\Lambda^k}) \leq Ce^{-cj^\beta}$, for $j\in \N$, $t=1,\dots,E$ for each $k$, we have
            \begin{equation}\label{rank_bound_2}
                \|\bu - \bu_k \| \lesssim (J+1)\varepsilon_k, \quad
                \max\limits_{t=1,\dots,E}\rank_t (\bu_k ) \lesssim (J+1)^2 \bigl(1+\abs{ \ln\varepsilon_k} \bigr)^{\frac{1}{\beta}}
            \end{equation}
    \end{enumerate} 
    In both cases, the constants depend on the quantities specified in Theorem \ref{thm:stsolve}.
\end{theorem}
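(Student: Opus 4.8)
The plan is to follow the convergence and quasi-optimality analysis of adaptive Galerkin methods without coarsening from \cite{GantumurHarbrechtStevenson:07}, supplying the two modifications needed in the present setting from Theorems~\ref{thm:bound_bar_Lambda} and \ref{thm:optimal_set_size} (the product-set refinement $\Expand$ replacing the scalar ``add-largest-entries'' step) and from Theorem~\ref{thm:stsolve} (the inexact Galerkin solver with rank control). The proof has three parts: geometric error reduction and termination with $\norm{\bA\bu_\varepsilon-\bbf}\le\varepsilon$; the discretization bound \eqref{support_bound}; and the rank bounds \eqref{rank_bound_1}, \eqref{rank_bound_2}.

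For the first part, I would record at the outset the norm equivalences $\varepsilon_k\eqsim\norm{\br_k}\eqsim\norm{\bA\bu_k-\bbf}\eqsim\norm{\bu-\bu_k}\eqsim\norm{\bu-\bu_k}_\bA$ with constants depending only on $\delta,\gamma,\Gamma$ and \emph{not} on $J$; these follow from $\norm{\br_k-(\bA\bu_k-\bbf)}\le\delta\norm{\br_k}$, $\xi\le\delta\norm{\br_k}$ at the moment $\varepsilon_k$ is evaluated, and \eqref{UEA_operator}. The error reduction combines three facts. First, the bulk condition \eqref{update_rule} for $\br_k$ and the residual tolerance give $\norm{\Restr_{\Lambda^{k+1}}(\bA\bu_k-\bbf)}\ge\mu_0\norm{\bA\bu_k-\bbf}$ with $\mu_0=(\alpha-\delta)/(1+\delta)<\alpha<\kappa(\bA)^{-1/2}$, so the Lemma after \eqref{UEA_operator}, applied with $\bw=\bu_k$, yields $\norm{\bu-\bu_{\Lambda^{k+1}}}_\bA\le(1-\mu_0^2\kappa(\bA)^{-1})^{1/2}\norm{\bu-\bu_k}_\bA$ for the exact Galerkin solution $\bu_{\Lambda^{k+1}}$ on $\Lambda^{k+1}$. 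Second, since $\supp\bu_{k+1}\subseteq\Lambda^{k+1}$ and $\supp\bu_k\subseteq\Lambda^k\subseteq\Lambda^{k+1}$, Galerkin orthogonality makes the splitting $\bu-\bu_{k+1}=(\bu-\bu_{\Lambda^{k+1}})+(\bu_{\Lambda^{k+1}}-\bu_{k+1})$ $\bA$-orthogonal, hence $\norm{\bu-\bu_{k+1}}_\bA^2=\norm{\bu-\bu_{\Lambda^{k+1}}}_\bA^2+\norm{\bu_{\Lambda^{k+1}}-\bu_{k+1}}_\bA^2$. Third, the $\STSolve$ stopping criterion $\norm{\bA_{\Lambda^{k+1}}\bu_{k+1}-\bbf_{\Lambda^{k+1}}}\le\eta\norm{\br_k}$ together with ellipticity of $\bA_{\Lambda^{k+1}}$ on $\operatorname{range}(\Restr_{\Lambda^{k+1}})$ gives $\norm{\bu_{\Lambda^{k+1}}-\bu_{k+1}}_\bA\le\gamma^{-1/2}\eta\norm{\br_k}\le\kappa(\bA)^{1/2}(1-\delta)^{-1}\eta\,\norm{\bu-\bu_k}_\bA$. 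Combining, $\norm{\bu-\bu_{k+1}}_\bA^2\le(1-\mu_0^2\kappa(\bA)^{-1}+\kappa(\bA)(1-\delta)^{-2}\eta^2)\,\norm{\bu-\bu_k}_\bA^2$, and the hypothesis $\eta<\tfrac{(1-\delta)(\alpha-\delta)}{1+\delta}\kappa(\bA)^{-1}$ makes the factor strictly below $1$; using the Pythagorean identity rather than a triangle inequality here is what makes this exact threshold on $\eta$ suffice. Consequently $\norm{\bu-\bu_k}_\bA$ and $\varepsilon_k$ decay geometrically, the outer loop stops after $K\lesssim\abs{\log\varepsilon}$ steps, and at termination $\norm{\bA\bu_\varepsilon-\bbf}\le\norm{\br_K}+\xi=\varepsilon_K\le\varepsilon$; finiteness of each inner $\ResApprox$ loop follows since $\norm{\br_{k+1}}$ stabilizes as $\xi$ is halved while the residual approximation error stays below $\xi$.

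For the discretization bound, I would first note that $\Lambda^k$ is a Cartesian product set at every step ($\Lambda^0=\emptyset$, and $\Expand$ returns product sets), so both $\Expand$ and $\STSolve$ receive admissible inputs. At step $k$, Theorem~\ref{thm:bound_bar_Lambda} gives $\sum_{j=0}^J\#(\Lambda^{k+1}_j\backslash\Lambda^k_j)\le\sum_{j=0}^J\#(\widehat\Lambda_j\backslash\Lambda^k_j)+J$, where $\widehat\Lambda$ is a product set of minimal added size with $\norm{\Restr_{\widehat\Lambda}\br_k}\ge\alpha\norm{\br_k}$. The crucial point is that every product set $\Lambda'$ with $\norm{\Restr_{\Lambda'}(\bA\bu_k-\bbf)}\ge\mu\norm{\bA\bu_k-\bbf}$ already satisfies $\norm{\Restr_{\Lambda'}\br_k}\ge(\mu(1-\delta)-\delta)\norm{\br_k}=\alpha\norm{\br_k}$, using $\mu=\tfrac{\alpha+\delta}{1-\delta}$ and $\norm{\bA\bu_k-\bbf}\ge(1-\delta)\norm{\br_k}$; hence $\sum_j\#(\widehat\Lambda_j\backslash\Lambda^k_j)$ is dominated by the minimal added size of a product set realizing the \emph{exact}-residual bulk condition with parameter $\mu$. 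Since $\mu<\kappa(\bA)^{-1/2}$, Theorem~\ref{thm:optimal_set_size} with $\alpha$ replaced by $\mu$, $\bw_\Lambda=\bu_k$, $\Lambda=\Lambda^k$, bounds the latter by $2C^{1/s_1}\norm{\bA\bu_k-\bbf}^{-1/s_1}\norm{\pi^{(0)}(\bu)}_{\cA^{s_1}}^{1/s_1}+2C^{1/s_2}\norm{\bA\bu_k-\bbf}^{-1/s_2}\sum_{j=1}^J\norm{\pi^{(j)}(\bu)}_{\cA^{s_2}}^{1/s_2}$. Summing over $k=0,\dots,K-1$ (with $\Lambda^0=\emptyset$), using that $\norm{\bA\bu_k-\bbf}\eqsim\norm{\bu-\bu_k}_\bA$ decreases geometrically so that $\sum_{k<K}\norm{\bA\bu_k-\bbf}^{-1/s}\lesssim\norm{\bA\bu_{K-1}-\bbf}^{-1/s}\lesssim\varepsilon^{-1/s}$ (since $\varepsilon_{K-1}>\varepsilon$), and absorbing the additive $JK\lesssim J\abs{\log\varepsilon}$, which is of lower order relative to the right-hand side of \eqref{support_bound} (note $\norm{\pi^{(j)}(\bu)}_{\cA^{s_2}}\ge\norm{\bu}$), yields \eqref{support_bound}.

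For the rank bounds, I would apply Theorem~\ref{thm:stsolve} to the call $\bu_k=\STSolve(\bA_{\Lambda^k},\bbf_{\Lambda^k},\eta\norm{\br_{k-1}})$: under the assumed singular-value decay of $\bu_{\Lambda^k}$, parts (i)/(ii) there bound $\max_t\rank_t(\bu_k)$ in terms of the effective tolerance, which by the stopping rule and the internal estimate $\norm{\bA_{\Lambda^k}\bw_i-\bbf_{\Lambda^k}}\lesssim(J+1)\varepsilon_i$ is bounded below by a fixed multiple of $\eta\norm{\br_{k-1}}/(J+1)\eqsim\varepsilon_{k-1}/(J+1)\eqsim\varepsilon_k/(J+1)$ (geometric decay); this gives \eqref{rank_bound_1} and \eqref{rank_bound_2}, and $\norm{\bu-\bu_k}\lesssim(J+1)\varepsilon_k$ is immediate from $\norm{\bu-\bu_k}\eqsim\varepsilon_k$. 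The step I expect to be the main obstacle is the discretization bound: tracking the chain of constants from the bulk guarantee for the \emph{inexact} residual $\br_k$ (parameter $\alpha$) to the \emph{exact}-residual optimal-set bound of Theorem~\ref{thm:optimal_set_size} (the inflated parameter $\mu$, which is exactly why $\mu<\kappa(\bA)^{-1/2}$ is assumed), and then checking that the per-step overhead $+J$ from the product-set construction together with the $\mathcal O(\abs{\log\varepsilon})$ outer iterations do not spoil the claimed rates $\varepsilon^{-1/s_1}$, $\varepsilon^{-1/s_2}$.
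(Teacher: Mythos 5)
Your proposal is correct and follows essentially the same route as the paper's proof. The structure (geometric error reduction via the Lemma + Galerkin orthogonality + inexactness split; transfer from the inexact-residual bulk condition at parameter $\alpha$ to the exact-residual bulk condition at parameter $\mu$; application of Theorem~\ref{thm:optimal_set_size} with parameter $\mu$; geometric summation over $k$; direct application of Theorem~\ref{thm:stsolve} for the ranks) matches what the paper does.

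Two minor points of comparison. First, for the error reduction you derive the contraction factor $\rho=\bigl[1-(\tfrac{\alpha-\delta}{1+\delta})^2\kappa(\bA)^{-1}+\tfrac{\eta^2}{(1-\delta)^2}\kappa(\bA)\bigr]^{1/2}$ from scratch via the $\bA$-orthogonal Pythagorean splitting, whereas the paper simply cites Stevenson [Prop.~4.2], which encapsulates exactly this computation; your derivation is the more self-contained version of the same argument, and your observation that the Pythagorean identity (rather than the triangle inequality) is what makes the stated $\eta$-threshold suffice is accurate. Second, for absorbing the per-step overhead $+J$ from Theorem~\ref{thm:bound_bar_Lambda}, the paper absorbs it step by step: it observes that each of the $J$ summands $C^{1/s_2}\norm{\bA\bu_k-\bbf}^{-1/s_2}\norm{\pi^{(j)}(\bu)}_{\cA^{s_2}}^{1/s_2}$ is $\ge 1$ (established inside the proof of Theorem~\ref{thm:optimal_set_size}), so the constant $2$ in \eqref{set_size_bound} can be raised to $3$ to swallow the $+J$ before summing over $k$. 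You instead sum the overheads to $JK\lesssim J|\log\varepsilon|$ and then argue this total is lower order than the right-hand side of \eqref{support_bound}. Both work, but the paper's per-step absorption is tighter and does not require any relation between $\varepsilon$ and $\norm{\bu}$; if you keep your version, you should make the lower-order comparison explicit (it follows from $|\log\varepsilon|=o(\varepsilon^{-1/s_2})$ together with $\norm{\pi^{(j)}(\bu)}_{\cA^{s_2}}\ge\norm{\bu}$, with the implicit constant depending on $\norm{\bu}$). Everything else, including the chain $\mu_0=\tfrac{\alpha-\delta}{1+\delta}$ for the exact-residual bulk guarantee used in the error reduction, the converse chain $\mu(1-\delta)-\delta=\alpha$ for the transfer into Theorem~\ref{thm:optimal_set_size}, and the geometric tail bound $\sum_{k<K}\norm{\bA\bu_k-\bbf}^{-1/s}\lesssim\varepsilon^{-1/s}$ using $\varepsilon_{K-1}>\varepsilon$, is exactly what the paper does.
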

\begin{proof}
    Since the parameters satisfy $0< \delta < \alpha$, $\mu= \frac{\alpha+\delta}{1-\delta} < \kappa(\bA)^{-\frac{1}{2}}$ and 
    \[0<\eta < \frac{(1-\delta)(\alpha-\delta)}{1+\delta}\kappa(\bA)^{-1},\] 
    we directly obtain from \cite[Prop.~4.2]{Stevenson:09} that
    \begin{equation}
        \label{error_reduction}
        \|\bu-\bu_{k+1}\|_\bA \leq \rho \|\bu-\bu_k\|_\bA 
    \end{equation}
    with \[
    \rho=\left[1-\left(\frac{\alpha-\delta}{1+\delta}\right)^2 \kappa(\bA)^{-1}+\frac{\eta^2}{(1-\delta)^2} \kappa(\bA)\right]^{\frac{1}{2}} < 1.\]
    Thus Algorithm \ref{alg_adaptive} terminates in at most $K$ steps, where $K$ is the minimal integer such that $\rho^K \Gamma^{1/2} \|\bu\|_\bA \leq \varepsilon$, so that $\bu_{\varepsilon}=\bu_K$. 

    Let $\bu_k$ be the current iterate and $\br_k$ be the  current approximate residual of Algorithm \ref{alg_adaptive} at step $k$. We illustrate that $\Lambda^{k+1}$ produced in line \ref{alg:Adap_expand} satisfies the following bound
    \[
        \sum_{j=0}^J \#(\Lambda^{k+1}_j\backslash \Lambda^k_j) \leq 3C^{\frac{1}{s_1}} \norm{\bA \bu_{k} - \bbf}^{-\frac{1}{s_1}} \norm{\pi^{(0)}(\bu)}_{\cA^{s_1}}^{\frac{1}{s_1}} + 3 C^{\frac{1}{s_2}} \norm{\bA \bu_{k} - \bbf}^{-\frac{1}{s_2}} \sum_{j=1}^{J} \norm{\pi^{(j)}(\bu)}_{\cA^{s_2}}^{\frac{1}{s_2}},
    \]
    where $C = (J+1) \sqrt{\norm{\bA}} / \lambda$.
    
    Let $\widehat{\Lambda} = \widehat{\Lambda}_0 \times \cdots\times \widehat{\Lambda}_J \subset \cG$ be such that $\sum_{j=0}^J \#(\widehat{\Lambda}_j \backslash \Lambda_j)$ is minimal among index sets satisfying
    \[
        \norm{\Restr_{\widehat{\Lambda}}\br_k} \geq \alpha \norm{\br_k}.
    \]
    Let $\lambda > 0$ be a constant such that $\mu \leq \kappa(\bA)^{-\frac{1}{2}}(1- \|\bA\|\lambda^2)^{\frac{1}{2}}$ and $\Lambda^* = \Lambda_0^* \times \cdots \times \Lambda_J^* \subset \cG$ be a minimizer of $\sum_{j=0}^J \#(\Lambda_j^* \backslash \Lambda_j)$ among the product sets such that
    \[
        \norm{\Restr_{\Lambda^*}(\bA \bu_k -\bbf)} \geq \mu \norm{\bA \bu_k - \bbf} .
    \]
    Since $\norm{\br_k -(\bA \bu_k - \bbf)} \leq \delta\norm{\br_k}$ and by definition of $\Lambda^*$, we note that $\mu \norm{\br_k} \leq \mu \norm{\bA \bu_k - \bbf} + \mu \delta \norm{\br_k} \leq \norm{\Restr_{\Lambda^*}(\bA \bu_k -\bbf)} + \mu \delta \norm{\br_k} \leq \norm{\Restr_{\Lambda^*} \br_k} + (\delta + \mu \delta) \norm{\br_k}$. Namely, we have $\norm{\Restr_{\Lambda^*} \br_k} \geq (\mu - \delta -\mu\delta)\norm{\br_k} = \alpha \norm{\br_k}$. With definition of $\widehat{\Lambda}$, we conclude that
    \[
        \sum_{j=0}^J \#(\widehat{\Lambda}_j \backslash \Lambda_j) \leq \sum_{j=0}^J \#(\Lambda_j^* \backslash \Lambda_j) .
    \]
    Together with estimates \eqref{set_size_ineq} and \eqref{set_size_bound}, and noting that all summands in the bound of \eqref{set_size_bound} are greater than $1$, we increase the multiplier $2$ to $3$ to eliminate the summation of $J$. Thus, we obtain
    \[
    \begin{aligned}
        \sum_{j=0}^J \#(\Lambda^{k+1}_j\backslash & \Lambda^k_j) \leq \sum_{j=0}^J \#(\Lambda_j^* \backslash \Lambda_j) + J \\
        & \leq 3C^{\frac{1}{s_1}} \norm{\bA \bu_{k} - \bbf}^{-\frac{1}{s_1}} \norm{\pi^{(0)}(\bu)}_{\cA^{s_1}}^{\frac{1}{s_1}} + 3 C^{\frac{1}{s_2}} \norm{\bA \bu_{k} - \bbf}^{-\frac{1}{s_2}} \sum_{j=1}^{J} \norm{\pi^{(j)}(\bu)}_{\cA^{s_2}}^{\frac{1}{s_2}} 
    \end{aligned}
    \]

    Through recursive summation, we obtain
    \[
    \begin{aligned}
        \sum_{j=0}^J \#(\Lambda^{K}_j) & = \sum_{k=0}^{K-1} \sum_{j=0}^J \#(\Lambda^{k+1}_j\backslash \Lambda^k_j)  \\
        & \leq 3C^{\frac{1}{s_1}} \norm{\pi^{(0)}(\bu)}_{\cA^{s_1}}^{\frac{1}{s_1}} \biggl(\sum_{k=0}^{K-1} \norm{\bA \bu_k - \bbf}^{-\frac{1}{s_1}}\biggr) \\
        & \qquad + 3 C^{\frac{1}{s_2}} \biggl(\sum_{j=1}^{J}\norm{\pi^{(j)}(\bu)}_{\cA^{s_2}}^{\frac{1}{s_2}} \biggr) \biggl(\sum_{k=0}^{K-1} \norm{\bA \bu_k - \bbf}^{-\frac{1}{s_2}}\biggr). \\
    \end{aligned}
    \]
    For the factors concerning the sum of residuals and for $ s_i \in \{s_1,s_2\}$, we have the estimate
    \[
    \begin{aligned}
        \biggl(\sum_{k=0}^{K-1} \norm{\bA \bu_k - \bbf}^{-\frac{1}{s_i}}\biggr) & \leq  \biggl(\sum_{k=0}^{K-1} \norm{\bA^{-1}}^{\frac{1}{2s_i}}\norm{\bu-\bu_k}_\bA^{-\frac{1}{s_i}} \biggr) \\
        & \leq \norm{\bA^{-1}}^{\frac{1}{2s_i}} \biggl(\sum_{k=0}^{K-1} (\rho^{K-1-k})^{\frac{1}{s_i}}\norm{\bu-\bu_{K-1}}_\bA^{-\frac{1}{s_i}}\biggr) \\
        & \leq \kappa(\bA)^{\frac{1}{2s_i}}\biggl(\sum_{k=0}^{K-1} (\rho^{K-1-k})^{\frac{1}{s_i}}\norm{\bA \bu-\bA \bu_{K-1}}^{-\frac{1}{s_i}}\biggr) \\
        & \leq  \frac{\kappa(\bA)^{\frac{1}{2s_i}}}{1-\rho^{\frac{1}{s_i}}}\varepsilon^{-\frac{1}{s_i}},
    \end{aligned}
    \]
    which altogether results in
    \[
        \sum_{j=0}^J \#(\Lambda_j) \leq   \frac{3\bar{C}^{\frac{1}{s_1}}}{1-\rho^{\frac{1}{s_1}}}\varepsilon^{-\frac{1}{s_1}}\norm{\pi^{(0)}(\bu)}_{\cA^{s_1}}^{\frac{1}{s_1}} +   \frac{3 \bar{C}^{\frac{1}{s_2}}}{1-\rho^{\frac{1}{s_2}}}\varepsilon^{-\frac{1}{s_2}} \biggl( \sum_{j=1}^{J} \norm{\pi^{(j)}(\bu)}_{\cA^{s_2}}^{\frac{1}{s_2}} \biggr),
    \]
    where $\bar{C} = (J+1) \sqrt{\kappa(A)\norm{\bA}} / \lambda$.

    Since $\bu_{k}$ is the approximate Galerkin solution with error bound $\eta \|\br_{k-1}\| \geq \eta \|\br_{k}\| \eqsim \eta \varepsilon_k$,
    the rank estimates \eqref{rank_bound_1} and \eqref{rank_bound_2} follow directly from Theorem \ref{thm:stsolve}.
\end{proof}

\begin{remark}
  The result of Theorem \ref{thm:complexity} improves on those of \cite{BachmayrDahmen:14,BachmayrCohenDahmen:18}, which are based on approximate Richardson iteration, in two main points. First, no coarsening of discretizations is required to maintain quasi-optimality. Here, exploiting the particular structure of the parametric problems plays an important role. 
  
 Second, the rank bounds in Theorem \ref{thm:stsolve} and in \eqref{rank_bound_1} and \eqref{rank_bound_2}, which are satisfied by all generated approximate solutions, lead to substantially more favorable bounds on the ranks of intermediate quantities. This results from the quasi-optimal rank bounds for each single iterate achieved by soft thresholding, in place of recurrent low-rank recompressions after several steps as in \cite{BachmayrDahmen:14}. The largest intermediate ranks that can occur in our case are thus those of residuals. Since the action of $\bA$ increases hierarchical ranks by at most a factor $J+1$, and assuming for simplicity that $\bbf$ has fixed finite ranks, we also have $\rank_t (\bA \bu_k - \bbf) \lesssim (J+1) \rank_t (\bu_k ) + \rank_t (\bbf)$, and analogous bounds hold for all further ranks that arise during the iteration.
 
However, we need to make a slightly stronger assumption on the decay of singular values $\sigma_t(\bu_{\Lambda^k})$, $t = 1,\ldots, E$, of the arising Galerkin discretizations, whereas in \cite{BachmayrDahmen:14}, only the decay of $\sigma_t(\bu)$ is used. As we have noted, although we have the componentwise bound $\sigma_t( \Restr_{\Lambda^k}\bu) \leq \sigma_t(\bu)$ for best approximations on $\Lambda^k$, this bound does not directly transfer to Galerkin solutions $\bu_{\Lambda^k}$.
\end{remark}

\section{Numerical experiments}\label{sec:numtests}

In this section, we choose an example to illustrate the performance of the proposed method. In particular, we are interested in the evolution of ranks and discretizations. The random field $a(x,y)$ exhibits an isotropic dependence on parameters within set $\mathcal{I}_1$, while it shows anisotropic dependence on parameters in set $\mathcal{I}_2$ with expansion in terms of hierarchical hat functions. It is given by
\begin{equation}
    a(x,y) = 1 + c_1 \sum_{i \in \mathcal{I}_1} \chi_{D_i}(x)y_i + c_2\sum_{(\ell,k) \in \mathcal{I}_2 } \theta_{\ell,k}(x)y_{\ell,k}, \quad D = (0,1),
\end{equation}
where $c_1, c_2 \in (0,1)$ are constants satisfying \eqref{UEA} and $\{D_i \colon i \in \cI_1\}$ with $\cI_1=\{1,2,\dots,N\}$ is a uniform partition of $D$ into $N$ subintervals for spatial dimension $d=1$. The functions $\theta_{\ell,k}(x)$ are defined as
\begin{equation}
    \theta_{\ell,k}(x) = 2^{-\alpha \ell}\theta(2^\ell x - k), \quad  \cI_2 = \{(\ell,k) : k = 0,1,\dots,2^{\ell}-1, \, \ceil{\log_2 N} \leq \ell \leq L\},
\end{equation}
with $\theta(x) = \max\{0,1-|2x -1|\}$. Here we set $\ell \geq \ceil{\log_2 N}$ to ensure that the support size of each $\theta_{\ell,k}$ does not exceed that of subinterval. In Figure \ref{plot:short_long_correlation}, we display two diffusion coefficients corresponding to different values of parameter $y$, which exhibit short and long correlation length, respectively. As can be observed, the red curve exhibits significant oscillations over a small length scale and indeed needs a finer spatial discretization compared to that required for the diffusion coefficient with a long correlation length, as shown by the blue curve.

\begin{figure}[htbp]
    \includegraphics{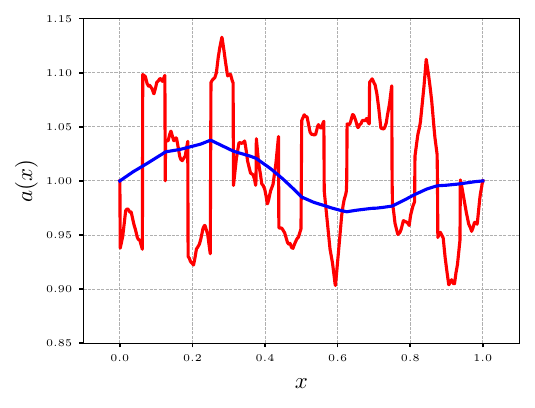}
    \caption{Two realizations of the random field $a(x,y)$ for distinct values of $y$. \textit{Red line:} a diffusion coefficient with short correlation length, and \textit{blue line:} a diffusion coefficient with long correlation length.}
    \label{plot:short_long_correlation}
\end{figure}

The adaptive Galerkin scheme is tested with $c_1=c_2=0.1$, $N =4$, $L = 5$ and $\alpha = 2$. This configuration results in $4$ dominant parameters, as well as  $60$ parameters exhibiting decreasing dependence. We compare our new approximation tensor format with sparse expansion, as in \eqref{eq:lrstandardform} and \eqref{HT-Sparse-coefficients}, to a standard tensor approximation separating all 65 variables. Such a standard low-rank approximation here amounts to taking $J=64$, which is the total number of parametric variables. In what follows, the results for new format are display in blue, labeled as $J=4$, while the results for standard format are shown in orange and accordingly labeled as $J=64$.

In the left plot of Figure \ref{plot:residual_set_size}, we illustrate the convergence with respect to the cardinality of the sparse expansion mode frame $\Lambda_0$ in terms of the residual error bound $\varepsilon$. The limiting approximation rate $\frac{4}{3}$ given by substituting $\alpha = 2$ in \eqref{fully_discrete_approximation_rate}, expected for sparse polynomial approximations, is recovered by the new format with $J=4$ (blue line). However, the standard format with $J=64$ exhibits a faster rate, since its sparse expansion mode frame includes only spatial discretizations. Here, we use sixth-order piecewise polynomials for spatial discretization and thus expect an asymptotic rate of $6$. Meanwhile, in the right plot, the sum of the sizes of parametric mode frames $\sum_{j=1}^{J}\#\Lambda_j$, is much smaller and increases more slowly for our new approximation format compared to the standard format. This benefits from when finer Legendre polynomial discretizations are required for parameters with anisotropic dependence, the new format can obtain finer discretizations in sparse expansion mode rather than parametric modes.
\begin{figure}[htp]
    \includegraphics{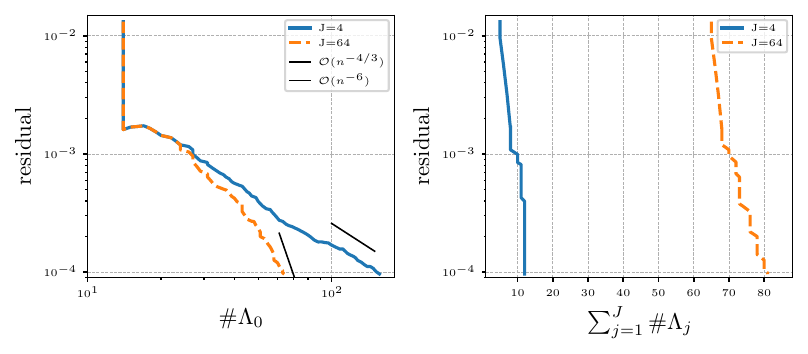}
    \caption{Computed residual bounds as a function of size of the sparse expansion mode frame (left) of iterates $\bu_k$ in new format (solid blue lines) and standard tensor format (dashed orange lines), and sum of size of parametric mode frames (right).}
    \label{plot:residual_set_size}
\end{figure}

The maximum hierarchical ranks of computed Galerkin iterates $\bu_k$ are shown in Figure \ref{plot:rank}. It is observed that the ranks of $\bu_k$ in the new format exhibit a much slower increase during the iterations compared to the standard format. Furthermore, while the ranks increase with $J$, their growth is more favorable than that quadratic dependence on $J$ as ensured by Theorem \ref{thm:complexity}. 

The numerically observed decay of the hierarchical singular values for different formats are shown in Figure \ref{plot:sv}. Note that in hierarchical tensor representations, the ranks of further matricizations are also included, as depicted by the dashed lines. We observe exponential decay of 
singular values of matricization $\cM_0(\bu_\varepsilon)$ in both cases, and a faster decrease in the new format.
\begin{figure}[htp]
    \includegraphics{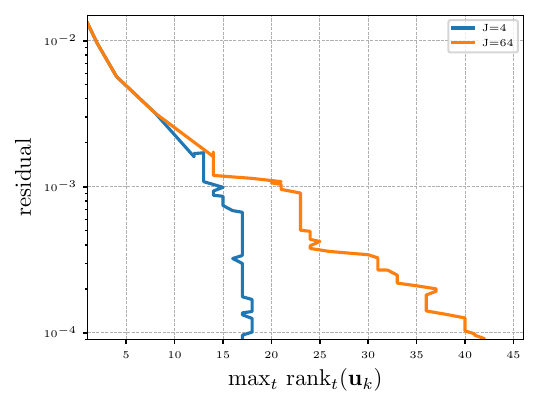}
    \caption{Maximum hierarchical ranks of iterates $\bu_k$, i.e., $\max_t \rank_t(\bu_k)$, plotted against the residual error bound $\varepsilon$, for the new format (blue) and standard tensor format (orange).}
    \label{plot:rank}
\end{figure}

\begin{figure}[htp]
    \includegraphics{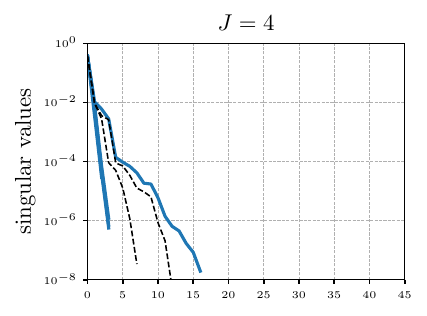}
    \includegraphics{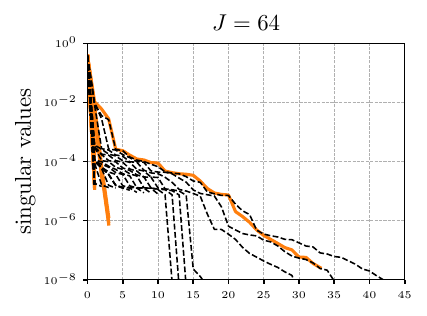}
    \caption{Hierarchical singular values of approximation of $\bu_\varepsilon$ in new format (left) and standard tensor format (right). \textit{Solid lines:} singular values of matricizations $\cM_{\{i\}}(\bu_\varepsilon)$ associated with $i \in \hat{\cI}$ (essentially identical for $i \in \{1,\dots,J\}$, with the line of slower decay corresponding to $i = 0$) and \textit{dashed lines:} singular values of further matricizations in hierarchical representation.}
    \label{plot:sv}
\end{figure}

Although the Galerkin iterates $\bu_k$ represented in the new format have lower ranks, this is at the expense of a larger size of the sparse expansion mode frame. In Figure \ref{plot:DoFs_ops}, we observe that for the same residual error bound, both the total number of degrees of freedom of iterates $\bu_k$ in the new format and estimate number of operations required for their orthogonalization are smaller than those in the fully separated standard format. This reflects that the new format with $J=4$ is quantitatively more efficient than the full separation with $J=64$, since those rank bounds constitute the central factor in the complexity analysis of such methods.
\begin{figure}
    \includegraphics{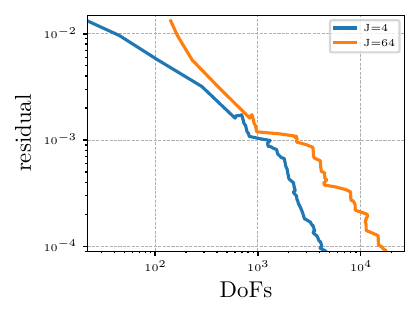}
    \includegraphics{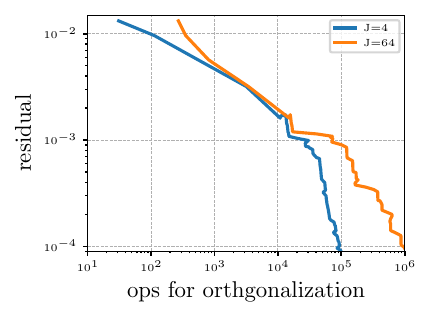}
    \caption{Computed residual bounds as a function of total number of degrees of freedom of iterates $\bu_k$ (left) and costs of orthogonalization of $\bu_k$ (right).}
    \label{plot:DoFs_ops}
\end{figure}

\section{Conclusion}

In this work, we have proposed a new approximation format for solutions of parametric problems that combines the advantages of low-rank tensor representations and sparse polynomial expansions. In addition, we have analyzed a new adaptive low-rank Galerkin method that exploits properties of this particular approximation format to achieve quasi-optimal discretizations without intermittent coarsening steps as in previous adaptive low-rank methods, and by the use of soft thresholding of hierarchical tensors provides improved rank bounds.

There are several directions for future work. While in the present work, we have focused on the basic technique of discretization refinement in the low-rank setting, the total computational complexity of the method also depends on the residual approximation technique used for the zeroth tensor mode with tail parameters and spatial variable. For this purpose, it will be of interest to adapt the tree-based wavelet refinement developed in \cite{BachmayrVoulis:22} to our present setting, which can be expected to lead to near-optimal bounds for the total computational complexity of the approach developed here. This also naturally leads to a question of a generalization to spatial discretizations using adaptive finite elements using recent results from \cite{BEEV:24,BEV}.

The performance of the method is tied to specific measures of the approximability of the basis representations of the solution, namely the decay of singular values of matricizations of the coefficient tensor and sparse approximability of the contractions. Our numerical tests show the expected improvement in low-rank approximability compared to fully separated tensor representations by our new approximation format, and they also indicate that the contractions have the expected degree of sparse approximability. An analysis of these measures of approximability will be carried out in a forthcoming work.

Another question that is left open here is the precise relation of the low-rank approximabilities of Galerkin discretizations and exact solutions, which we will consider elsewhere in a more general context.

\bibliographystyle{amsplain}
\bibliography{BYparamalrgalerkin}

\end{document}